\newcommand{\bC}{\mathbb{C}}
\newcommand{\bR}{\mathbb{R}}
\newcommand{\bZ}{\mathbb{Z}}
\newcommand{\bP}{\mathbb{P}}
\newcommand{\calL}{\mathcal{L}}
\newcommand{\calE}{\mathcal{E}}
\newcommand{\calO}{\mathcal{O}}
\newcommand{\fd}{\mathfrak{d}}
\newcommand{\fm}{\mathfrak{m}}
\newcommand{\I}{\mathrm{I}}
\newcommand{\pb}{\mathcal{P}}
\newcommand{\pbp}{\varphi}
\newtheorem{Th}{Theorem}[section]
\newtheorem{Prop}[Th]{Proposition}
\newtheorem{Lem}[Th]{Lemma}
\newtheorem{Cor}[Th]{Corollary}
\newtheorem{Rem}[Th]{Remark}
\newtheorem{Ex}[Th]{Example}
\newtheorem{Def}[Th]{Definition}
\begin{document}
\title{
Nodal curves with a contact-conic and Zariski pairs
}
\date{}
\author{Shinzo Bannai}
\address{Department of Natural Sciences, National Institute of Technology, Ibaraki College, 866 Nakane, Hitachinaka, Ibaraki 312-8508, Japan}
\email{sbannai@ge.ibaraki-ct.ac.jp}

\author{Taketo Shirane}
\address{National Institute of Technology, Ube College, 2-14-1 Tokiwadai, Ube, Yamaguchi 755-8555, Japan}
\email{tshirane@ube-k.ac.jp}

\keywords{splitting curve, Zariski pair, double cover}

\subjclass[2010]{14E20, 14H50, 32S50, 57N35}

\maketitle

\begin{abstract}
In this present paper, we study the splitting of nodal plane curves with respect to contact conics. We define the notion of {\it splitting type} of such curves and show that it can be used as an invariant to distinguish the embedded topology of plane curves. We also give a criterion to determine the splitting type in terms of the configuration of the nodes and tangent points. As an application, we construct sextics and contact conics with prescribed splitting types, which give rise to new Zariski-triples. 
 
\end{abstract}

\section{Introduction}

Let $\Gamma$ be a irreducible curve on the complex projective plane $\bP^2$, 
and let $\pi:X\to\bP^2$ be a Galois cover branched along $\Delta\subset\bP^2$. 
We call $\Gamma$ a \textit{splitting curve} with respect to $\pi$ if the pull back $\pi^{\ast}\Gamma$ of $\Gamma$ is reducible. 
Splitting curves have been studied by various mathematicians. 
For example, E.~Artal-Bartolo and Tokunaga studied the splitting of nodal rational curves with respect to a double cover branched along a smooth conic in \cite{k-plet}; 
I.~Shimada studied the splitting of smooth conics with respect to a double cover branched along a sextic curve in \cite{shimada}; 
the first author studied the splitting type of pairs of smooth curves of degree up to three with respect to a double cover branched along nodal quartics in \cite{bannai}; and 
the second author studied the splitting number of a smooth curve for a simple cyclic cover branched along a smooth curve in \cite{shirane}. 
In the papers listed above, the splitting of rational curves and smooth curves is intensively studied. 
However, it seems that there were few studies of the splitting of non-rational singular curve. 
For example, Tokunaga studied the splitting of nodal quartic curves in \cite{tokunaga1, tokunaga2}. 
In this paper, we investigate the splitting type of non-rational nodal curves with respect to a double cover branched along a smooth conic. 
More precisely, following the study of \cite{k-plet}, we define the splitting type of a plane curve with respect to  a smooth conic.

One of the motivation of study of the splitting curves is its application to Zariski $k$-plets, 
where 
a \textit{Zariski $k$-plet} is a $k$-plet $(C_1,\dots, C_k)$ of plane curves $C_1,\dots, C_k\subset\bP^2$ satisfying the following two conditions; 
\begin{enumerate}
\item
there exist tubular neighborhoods $\mathcal{T}_i$ of $C_i$ for $i=1,\dots,k$ such that the $k$ topological pairs $(\mathcal{T}_i,C_i)$ are homeomorphic; 
\item 
the topological pairs $(\bP^2,C_i)$ and $(\bP^2,C_j)$ are not homeomorphic if $i\ne j$. 
\end{enumerate}
We call a Zariski $2$-plet a \textit{Zariski pair}. 
It is known that, for a plane curve $C$ and its tubular neighborhood $\mathcal{T}$,  the homeomorphism class of $(\mathcal{T},C)$ is determined by the combinatorial type of $C$, 
where the combinatorial type of $\Gamma$ is data of number of its irreducible components, degree and singularities of each component, configuration of the components (see \cite{survey} for detail). 
There are may results about Zariski pairs by using some topological invariants of plane curves (cf. \cite{artal, acc, survey, k-plet, bannai, degtyarev, degtyarev2, benoit-meilhan, oka, shimada, shirane, zariski}). 
A basic invariant is the fundamental group of the complement of a plane curve (cf. \cite{artal, survey, k-plet, oka, zariski}). 
Recently, Zariski pairs which can not be distinguished by the fundamental group were found (cf. \cite{survey, degtyarev2, shirane}). 
We call such a Zariski pair a \textit{$\pi_1$-equivalent Zariski pair}. 
In particular, the result of \cite{shirane} shows that study of the splitting curves allow us to construct $\pi_1$-equivalent Zariski pairs. 
In this paper, we 
prove that our splitting type is a topological invariant, 
and can be used to distinguish topology of plane curves. 

The main theorem of this paper is a criterion for determining  the splitting type of a nodal curve with respect to a smooth conic in terms of the  configuration of the nodes and tangent points of the nodal curve and the smooth conic (Theorem~\ref{thm. split}). 
However, for a given splitting type, it is difficult to explicitly construct nodal curves with a contact conic which have the splitting type, especially when the nodes and tangent points need to be in general position. 
In order to construct various examples of nodal sextic curves with a contact conic, 
we prove that there exists a correspondence between nodal sextic curves on $\bP^2$ with a contact conic and quartic surfaces in $\bP^3$ with a node, where a node of a surface means an $A_1$-singularity. 

Nodal quartic surfaces have been studied classically. 
In \cite{jessop}, a classification of nodal quartic surfaces was given. Also in \cite{coble}, quartic surfaces with $8$-nodes were studied from a different perspective.
An $8$-nodal quartic surface is called \textit{syzygetic} if the quartic surface is defined by a quadratic form of quadratic forms, 
otherwise it is call \textit{asyzygetic} (see Definition~\ref{def. conical} for details). 
It is known that the geometric difference of syzygetic and asyzygetic quartic surface lie in the configuration of their nodes. 
In this paper, we prove that a syzygetic quartic surface and an asyzygetic quartic surface give two sextic curves with contact conics in the correspondence mentioned above, which have different splitting types. 
Moreover, we improve our criterion for splitting types of $7$-nodal sextic curves by difference of syzygetic and asyzygetic quartic surfaces (Theorem~\ref{thm. conical}). 

This paper is organized as follows: 

In Section~\ref{sec. splitting}, we define the splitting type of plane curves with respect to a smooth conic, and give a criterion for determining the splitting type in terms of the position of singularities. 
In Section~\ref{sec. correspondence}, we give a correspondence between quartic surfaces with a node and sextic curves with a contact conic. 
In Section~\ref{sec. quartic surfaces}, we recall the geometric difference of syzygetic and asyzygetic quartic surfaces in detail in order to improve the criterion  in Section~\ref{sec. splitting} for $7$-nodal sextic curves with a contact conic. 
In Section~\ref{sec. improve}, we improve the criterion in Section~\ref{sec. splitting} for $7$-nodal sextic curves. 
In the final section, we give new examples of Zariski pairs consisting of $6$ and $7$-nodal sextic curves with a contact conic. 
\medskip

\section{Splitting curves with respect to contact conics}\label{sec. splitting}

The aim of this section is to prove Theorem~\ref{thm. split} below. 
To state Theorem~\ref{thm. split}, we define some notion. 

\begin{Def}{\rm
Let $\Gamma$ be a curve. 
We call $\Gamma$ a \textit{nodal curve} if all of the singular points of $\Gamma$ are nodes. 
Moreover, for a positive integer $r$, we call a nodal curve $\Gamma$ an \textit{$r$-nodal curve} if the number of nodes of $\Gamma$ is $r$. Note that we allow $\Gamma$ to be reducible unless otherwise stated.
}\end{Def}

For two curves $\Gamma_1, \Gamma_2\subset\bP^2$ and a point $P\in\Gamma_1\cap\Gamma_2$, 
the local intersection number of $\Gamma_1$ and $\Gamma_2$ at $P$ is denoted by $\I_P(\Gamma_1,\Gamma_2)$. 
For a curve $\Delta\subset\bP^2$ of degree $2 d$, 
let $\pi_{\Delta}:X_{\Delta}\to\bP^2$ denote the double cover branched along $\Delta$, 
and let $\iota_{\Delta}:X_{\Delta}\to X_{\Delta}$ be the covering transformation of $\pi_{\Delta}$. 
Note that, if $\Delta$ is a smooth conic, then $X_{\Delta}\cong\bP^1\times\bP^1$ and $\iota_{\Delta}$ interchanges the two rulings. 

\begin{Def}{\rm
Let $\Gamma$ be a curve on $\bP^2$. 
\begin{enumerate}
\item
A smooth conic $\Delta\subset\bP^2$ is called a \textit{contact conic of $\Gamma$} 
if all of the intersection points of $\Gamma$ and $\Delta$ are smooth points of $\Gamma$, and $I_P(\Gamma,\Delta)\geq 2$ for all $P\in \Gamma\cap\Delta$. 
Furthermore, a contact conic $\Delta\subset\bP^2$ of $\Gamma$ is called an \textit{even contact conic} (resp. a \textit{simple contact conic}) of $\Gamma$ 
if  $I_P(\Gamma,\Delta)$ is even (resp. equal to two) for all $P\in\Gamma\cap\Delta$. 
\item 
Let $\Gamma\subset\bP^2$ be a curve with an even contact conic $\Delta$. 
For two positive integers $m\leq n$, 
we call $\Gamma$ a \textit{splitting curve of type $(m,n)$ with respect to $\Delta$} 
if $\pi_{\Delta}^{\ast}\Gamma=D^++D^-$ for an $(m,n)$-divisor $D^+$ on $X_{\Delta}\cong\bP^1\times\bP^1$, where $D^-=\iota_{\Delta}^{\ast}D^+$. 
We call $\Gamma$ a \textit{non-splitting curve with respect to $\Delta$} 
if $\Gamma$ is not a splitting curve of type $(m,n)$ with respect to $\Delta$ for any $m\leq n$, or equivalently, 
$\pi_{\Delta}^{\ast}\Gamma_0$ is irreducible for some irreducible component $\Gamma_0$ of $\Gamma$. 
\end{enumerate}
}\end{Def}

\begin{Th}
For $i=1,2$, let $\Gamma_i\subset\bP^2$ be an irreducible curve of degree $d\geq 3$ with a contact conic $\Delta_i$. 
Assume that there exists a homeomorphism $h:\bP^2\to\bP^2$ such that $h(\Gamma_1+\Delta_1)=\Gamma_2+\Delta_2$. 
Then $\Gamma_1$ is a splitting curve of type $(m,n)$ with respect to $\Delta_1$ if and only if  $\Gamma_2$ is also a splitting curve of type $(m,n)$ with respect to $\Delta_2$. 
\end{Th}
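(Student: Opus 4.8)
The plan is to show that the splitting type is encoded in an invariant that can be read off from the homeomorphism type of the pair $(\bP^2, \Gamma_i + \Delta_i)$. The natural candidate is the first Betti number (or the structure of $H_1$) of the double cover of $\bP^2$ branched along $\Delta_i$, restricted to — or rather detected by — the curve $\Gamma_i$. Concretely, since $\Delta_i$ is a contact conic, it is in particular an even contact conic after possibly passing to the relevant multiplicities, so $\pi_{\Delta_i}^\ast \Gamma_i$ either stays irreducible (the non-splitting case) or breaks as $D^+ + D^-$ with $D^\pm$ of bidegree $(m,n)$ and $(n,m)$ on $X_{\Delta_i} \cong \bP^1 \times \bP^1$. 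The first step is to reformulate "splitting of type $(m,n)$" purely in terms of the topology of the branched double cover together with the class of $\Gamma_i$: the number of irreducible (equivalently, connected) components of $\pi_{\Delta_i}^\ast\Gamma_i$ lying over $\Gamma_i$, and the way the covering involution $\iota_{\Delta_i}$ permutes them, is a topological datum; and the bidegree pair $(m,n)$ is recovered from intersection numbers of these components with the two rulings, which in turn are determined by how the components meet the preimage of a line (or of $\Delta_i$ itself) — all homological, hence topological, information.

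The key steps, in order, are as follows. (1) Build the branched double cover topologically: given the homeomorphism $h$ with $h(\Gamma_1 + \Delta_1) = \Gamma_2 + \Delta_2$, observe that $h$ carries $\Delta_1$ to $\Delta_2$ and $\Gamma_1$ to $\Gamma_2$ (the conic component and the degree-$d$ component are distinguished topologically, e.g.\ by local degree or by the fact that $d \geq 3$), so $h$ restricts to a homeomorphism of complements $\bP^2 \setminus \Delta_1 \to \bP^2 \setminus \Delta_2$. Since the double cover $X_{\Delta_i}\setminus \pi_{\Delta_i}^{-1}(\Delta_i) \to \bP^2\setminus\Delta_i$ is the unique connected double cover determined by the unramified-double-cover data on the complement (the index-two subgroup of $H_1(\bP^2\setminus\Delta_i;\bZ/2)$), $h$ lifts to a homeomorphism $\tilde h$ of the unramified covers, which then extends over the ramification to a homeomorphism $X_{\Delta_1} \to X_{\Delta_2}$ commuting with the involutions, and with $\tilde h(\pi_{\Delta_1}^\ast\Gamma_1) = \pi_{\Delta_2}^\ast\Gamma_2$. (2) Conclude that $\pi_{\Delta_1}^\ast\Gamma_1$ is reducible if and only if $\pi_{\Delta_2}^\ast\Gamma_2$ is, so $\Gamma_1$ is splitting iff $\Gamma_2$ is. (3) In the splitting case, show the bidegree is preserved: the two components $D_1^\pm$ are the connected components of $\pi_{\Delta_1}^\ast\Gamma_1$ (each is smooth or at worst nodal and irreducible, being half of a divisor meeting the branch locus evenly), $\tilde h$ maps $\{D_1^+, D_1^-\}$ to $\{D_2^+, D_2^-\}$, and the unordered pair of bidegrees $\{(m,n),(n,m)\}$ is a topological invariant of the pair $(D_i^+, D_i^-)$ inside $X_{\Delta_i}$ — for instance because $m+n$ is the degree of $\Gamma_i$ hence determined, while $mn = D_i^+ \cdot D_i^- $ up to the contribution of the fixed locus, an intersection number preserved by the homeomorphism $\tilde h$; from $m+n$ and $mn$ one recovers $\{m,n\}$.

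The main obstacle I expect is step (3): rather than step (1)–(2), which is the standard "homeomorphism of the pair lifts to the branched double cover" argument (and is essentially how one proves the splitting number is a topological invariant in \cite{shirane}), the delicate point is extracting the ordered-up-to-swap bidegree $(m,n)$ from purely topological data on $X_{\Delta_i} \cong \bP^1 \times \bP^1$. One must be careful that $\tilde h$ need not respect the product structure or the two rulings individually — indeed $\iota_{\Delta_i}$ itself swaps them — so the argument has to use only invariants symmetric under swapping the two factors. Packaging $\{m,n\}$ via the symmetric functions $m+n = \deg\Gamma_i$ and $mn$ (read as a self-intersection-type or mutual-intersection number of the components of $\pi_{\Delta_i}^\ast\Gamma_i$, which $\tilde h$ preserves) is the clean way around this; verifying that the relevant intersection number is genuinely topological and genuinely equals $mn$ (accounting correctly for intersections occurring along the ramification curve) is the one computation that needs care. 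Once that is in place, the theorem follows.
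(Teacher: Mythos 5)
Your overall strategy is the same as the paper's: use $d\geq 3$ to see $h(\Gamma_1)=\Gamma_2$, $h(\Delta_1)=\Delta_2$, lift $h$ to a homeomorphism of the double covers over the complement of the branch conics (the paper invokes the argument of \cite[Proposition~2.5]{bannai} and never needs the extension across the ramification), transport the components of $\pi_{\Delta_1}^{\ast}\Gamma_1$ to components of $\pi_{\Delta_2}^{\ast}\Gamma_2$, and then recover the unordered pair $\{m,n\}$ from $m+n=d$ together with a topologically invariant intersection number of the two components. That is exactly how the paper concludes.

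However, the specific formula you propose in step (3) is wrong as stated: on $X_{\Delta}\cong\bP^1\times\bP^1$ the component $D^+$ has bidegree $(m,n)$ and $D^-=\iota_{\Delta}^{\ast}D^+$ has bidegree $(n,m)$, so $D^+.D^-=m^2+n^2$, not $mn$, and no ``fixed locus correction'' converts one into the other. This does not sink the argument, because $m+n=d$ together with $m^2+n^2$ determines $\{m,n\}$ just as well as $m+n$ and $mn$ would --- and this is precisely the paper's computation: $m^2+n^2=D_1^+.D_1^-=D_2^+.D_2^-=m'^2+n'^2$, where equality of the middle terms uses that local intersection multiplicities at points off the ramification curve are topological invariants preserved by the lift, while at the points of $\pi_{\Delta_i}^{-1}(\Gamma_i\cap\Delta_i)$ the local intersection number of $D_i^+$ and $D_i^-$ is determined by $\I_P(\Gamma_i,\Delta_i)$, which $h$ preserves. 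A second, smaller slip: $D^+$ and $D^-$ are \emph{not} the connected components of $\pi_{\Delta}^{\ast}\Gamma$ --- they meet in $m^2+n^2>0$ points, so the pullback is connected --- hence the transport of components under the lifted homeomorphism should be phrased via irreducible components (e.g.\ closures of the images of $D_1^{\pm}\setminus\bar{\Delta}_1$, as in the paper), not via connectedness of the total preimage.
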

\begin{proof}
Since $\deg\Gamma_i=d\geq 3$, we have $h(\Gamma_1)=\Gamma_2$ and $h(\Delta_1)=\Delta_2$. 
Let $\bar{\Delta}_i\subset X_{\Delta_i}$ be the preimage of $\Delta_i$ for $\pi_{\Delta_i}$. 
As in the proof of \cite[Proposition~2.5]{bannai}, there exists a homeomorphism $\bar{h}:X_{\Delta_1}\setminus\bar{\Delta}_1\to X_{\Delta_2}\setminus\bar{\Delta}_2$ such that $h\circ\pi_{\Delta_1}=\pi_{\Delta_2}\circ \bar{h}$ over $\bP^2\setminus\Delta_2$: 
\[
\begin{diagram}
\node{X_{\Delta_1}\setminus\bar{\Delta}_1} \arrow{s,l}{\pi_{\Delta_1}} \arrow{e,t}{\bar{h}} \node{X_{\Delta_2}\setminus\bar{\Delta}_2} \arrow{s,r}{\pi_{\Delta_2}} \\
\node{\bP^2\setminus\Delta_1} \arrow{e,b}{h} \node{\bP^2\setminus\Delta_2}
\end{diagram}
\]
Suppose that $\Gamma_1$ is a splitting curve of type $(m,n)$ with respect to $\Delta_1$, say $\pi_{\Delta_1}^{\ast}\Gamma_1=D_1^++D_1^-$ for a $(m,n)$-divisor $D^+$. 
Let $D_2^+$ and $D_2^-$ be the closures of $\bar{h}(D_1^+\setminus\bar{\Delta}_1)$ and $\bar{h}(D_1^-\setminus\bar{\Delta}_1)$, respectively. 
Then $D_2^+$ and $D_2^-$ are irreducible components of $\pi_{\Delta_2}^{\ast}\Gamma_2$. 
Hence $\Gamma_2$ is a splitting curve with respect to $\Delta_2$, say of type $(m',n')$. 
Since the local intersection number of $D_i^+$ and $D_i^-$ at a point $\bar{P}$ of $\pi_{\Delta_i}^{-1}(\Delta_i\cap\Gamma_i)$ determines the intersection of $\Gamma_i$ and $\Delta_i$ at $\pi(\bar{P})$, we obtain 
\[ m^2+n^2=D_1^+. D_1^-=D_2^+.D_2^-=m'^2+n'^2. \]
Since $m+n=m'+n'=d$, $m\leq n$ and $m'\leq n'$, we obtain $m=m'$ and $n=n'$. 
\end{proof}

For  curves $\Gamma$, $\Delta$ and a line $L$ on $\bP^2$, we let $\gamma$, $\delta$ and $l$ be homogeneous polynomials which define $\Gamma$, $\Delta$ and $L$, respectively. For a real number $r\in\bR$, the maximal integer not beyond $r$ is denoted by $[r]$. 
In the following, we say that  $L$ is  {\it a general line} (with respect to $\Gamma$) if  $L$ intersects $\Gamma$ transversally.

\begin{Th}\label{thm. split}
Let $m,n$ be two integers with $0<m\leq n$, $d:=m+n$, $k:=n-m$, 
and $\alpha:=(m^2+n^2-m-n)/2$. 
Let $\Gamma\subset\bP^2$ be a nodal curve of degree $d$ with a simple contact conic 
$\Delta$. 
Let $T_1,\dots,T_{d}$ be the tangent points of $\Gamma$ and $\Delta$. 

Then the following three conditions are equivalent: 
\begin{enumerate}[\rm (i)]
\item\label{thm. split 1} 
$\Gamma$ is a splitting curve of type $(m,n)$ with respect to $\Delta$; 
\item\label{thm. split 2} 
for any general line $L$ tangent to $\Delta$, there exist $c_i\in H^0(\bP^2,\calO_{\bP^2}(i))$ for $i=n-1, n$ satisfying the following conditions:  
\begin{enumerate}[{\rm ({ii}-a)}]
\item 
$c_n$ and $c_{n-1}$ have no common factor with $l$, and 
\item 
$\gamma\, l^k=c_n^2-\delta\,c_{n-1}^2$; 
\end{enumerate}
\item\label{thm. split 3}
for a general line $L$ tangent to  $\Delta$, there exist $c_i\in H^0(\bP^2,\calO_{\bP^2}(i))$ for $i=n-1, n$ satisfying the conditions (ii-a) and (ii-b) above; 
\item\label{thm. split 4} 
for any general  line $L$ tangent to  $\Delta$ at $T_0\in\Delta$, 
there are $\alpha$ nodes  $P_1,\dots, P_{\alpha}\in\Gamma$ of $\Gamma$, $m$ intersection points $Q_1,\dots, Q_m\in\Gamma\cap L$ of $\Gamma$ and $L$, and two divisors $C_n$ and $C_{n-1}$ on $\bP^2$ of degree $n$ and $n-1$ respectively
satisfying the following conditions; 
\begin{enumerate}[{\rm ({iv}-a)}]
\item both  $C_n$ and $C_{n-1}$ pass through the nodes $P_1,\dots,P_{\alpha}$, and 
$C_n$ passes through the tangent points $T_1,\dots,T_{d}\in\Gamma\cap \Delta$; 
\item $C_{j}$ is smooth at $Q_i$, 
and $\I_{Q_i}(C_{j},\Gamma)=k$ for each $i=1,\dots,m$ and $j=n-1,n$; 
\item $C_n$ has $[k/2]+(1-(-1)^k)/2$ local branches at $T_0$, and $[k/2]$ of the local branches are tangent to $\Delta$; 
\item $C_{n-1}$ has $[(k-1)/2]+(1-(-1)^{k-1})/2$ local branches at $T_0$, 
and $[(k-1)/2]$ of the local branches are them tangent to $\Delta$; 
\item any two of the five divisors $\Gamma$, $\Delta$, $L$, $C_n$ and $C_{n-1}$ have no common components; 
\end{enumerate}
\item\label{thm. split 5} 
for a general  line $L$ tangent to $\Delta$ at $T_0\in\Delta$, 
there are $\alpha$ nodes $P_1,\dots, P_{\alpha}\in\Gamma$ of $\Gamma$,  $m$ intersection points $Q_1,\dots, Q_m\in\Gamma\cap L$ of $\Gamma$ and $L$, and two divisors $C_n$ and $C_{n-1}$ on $\bP^2$ of degree $n$ and $n-1$ respectively
satisfying the conditions in (iv-a), \dots, (iv-e) above; 
\end{enumerate}
\end{Th}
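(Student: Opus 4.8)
The plan is to work on the double cover $\pi_\Delta\colon X_\Delta\to\bP^2$ throughout and to prove the cycle of implications (i) $\Rightarrow$ (ii) $\Rightarrow$ (iv) $\Rightarrow$ (v) $\Rightarrow$ (iii) $\Rightarrow$ (i), where (ii) $\Rightarrow$ (iv) is understood between the corresponding ``for any $L$'' statements and (iv) $\Rightarrow$ (v) is formal. First I collect the geometry of $X_\Delta$: since $\Delta$ is a smooth conic, $X_\Delta\cong\bP^1\times\bP^1$ with $\iota_\Delta$ interchanging the rulings, the reduced preimage $\bar\Delta$ of $\Delta$ is a $(1,1)$-curve, $\pi_\Delta^\ast\calO_{\bP^2}(j)=\calO(j,j)$, and, writing $w$ for the $\iota_\Delta$-anti-invariant section with $w^2=\pi_\Delta^\ast\delta$, one has the $\iota_\Delta^\ast$-eigenspace decomposition $H^0(X_\Delta,\calO(j,j))=\pi_\Delta^\ast H^0(\bP^2,\calO(j))\oplus w\cdot\pi_\Delta^\ast H^0(\bP^2,\calO(j-1))$. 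For a general line $L$ tangent to $\Delta$ at $T_0$, the $(1,1)$-curve $\pi_\Delta^\ast L$ is singular at the ramification point $\bar T_0$ lying over $T_0$, hence splits as $\pi_\Delta^\ast L=M^++M^-$ with $M^\pm$ the two ruling fibres through $\bar T_0$; fix defining sections $g^\pm$ of $M^\pm$ with $\iota_\Delta^\ast g^+=g^-$ up to a scalar. I also record the standard dictionary between the local intersection of a divisor $D^+$ and $D^-=\iota_\Delta^\ast D^+$ at a point over $\Gamma\cap\Delta$ and the contact order of $\Gamma$ and $\Delta$ there.

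For (i) $\Rightarrow$ (ii): if $\pi_\Delta^\ast\Gamma=D^++D^-$ with $D^+$ of bidegree $(m,n)$, defined by $f^+$ and $D^-=\mathrm{div}(\iota_\Delta^\ast f^+)$, then $f^+(g^+)^k$ has bidegree $(n,n)$ (as $m+k=n$), so its eigenspace decomposition reads $\pi_\Delta^\ast c_n+w\,\pi_\Delta^\ast c_{n-1}$ for suitable $c_i\in H^0(\bP^2,\calO(i))$, and multiplying it by its $\iota_\Delta^\ast$-conjugate gives $\pi_\Delta^\ast(c_n^2-\delta c_{n-1}^2)=f^+\cdot\iota_\Delta^\ast f^+\cdot(g^+g^-)^k=\pi_\Delta^\ast(\gamma l^k)$ up to a scalar, which is (ii-b); and $D^+$ is irreducible (since $\Gamma$ is irreducible of degree $\ge 3$) of bidegree $(m,n)$ with $m,n\ge 1$, hence not a ruling fibre, so for general $L$ neither $M^+$ nor $M^-$ lies in $D^\pm$, and comparing vanishing orders along $M^\pm$ yields (ii-a). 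For (ii) $\Rightarrow$ (iv): the birational morphism $\pi_\Delta|_{D^+}\colon D^+\to\Gamma$ identifies $D^+$, which is smooth, with the normalization of the nodal curve $\Gamma$, so $g(\Gamma)=(m-1)(n-1)$ and $\Gamma$ has exactly $\alpha$ nodes; take $P_1,\dots,P_\alpha$ to be all of them, $Q_1,\dots,Q_m$ the points of $\Gamma\cap L$ whose preimage on $D^+$ lies on $M^-$, and $C_i=\{c_i=0\}$. Then (iv-a)--(iv-e) are checked by local computation on $X_\Delta$: $c_n$ and $c_{n-1}$ vanish at the $P_j$ because $D^+$ and $D^-$ both pass through each of the two points over a node, $c_n$ vanishes at each $T_i$ because $D^+\cap\bar\Delta$ lies over $\Gamma\cap\Delta$, the contact order $k$ at $Q_i$ comes from the transversality $D^+\cdot M^-=m$, and the branch structure at $T_0$ is read off the tangent cones of $\pi_\Delta^\ast c_n$ and $\pi_\Delta^\ast c_{n-1}$ at $\bar T_0$: their $k$, resp. $k-1$, smooth branches are matched in pairs by $\iota_\Delta$, leaving one $\iota_\Delta$-invariant branch exactly when $k$, resp. $k-1$, is odd, which descends to a branch of the image transverse to $\Delta$, while each matched pair descends to a branch tangent to $\Delta$. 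Condition (iv-e) follows from irreducibility, degree reasons, and (ii-a).

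The implication (iv) $\Rightarrow$ (v) is formal. For (v) $\Rightarrow$ (iii) I take $c_n,c_{n-1}$ to be defining polynomials of the given $C_n,C_{n-1}$, so that (ii-a) is immediate from (iv-e), and the problem is to derive the identity (ii-b). The first step is that (iv-b), (iv-c) and B\'ezout pin down $C_n\cap L=\{T_0,Q_1,\dots,Q_m\}$ with $\I_{T_0}(C_n,L)=k$ and $\I_{Q_i}(C_n,L)=1$, and likewise (iv-b), (iv-d) give $C_{n-1}\cap L=\{T_0,Q_1,\dots,Q_m\}$ with $\I_{T_0}(C_{n-1},L)=k-1$; choosing a linear form $\ell_0$ on $L$ with $\ell_0^2=\delta|_L$, this forces $c_n|_L$ and $c_{n-1}|_L$ to be, up to constants, $\ell_0^k$, resp. $\ell_0^{k-1}$, times the product of the linear forms of the $Q_i$. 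The second, harder step is to show these constants agree up to sign, equivalently that $L\subseteq\{c_n^2=\delta c_{n-1}^2\}$; granting this, $u:=\pi_\Delta^\ast c_n+w\,\pi_\Delta^\ast c_{n-1}$ (or its $\iota_\Delta^\ast$-conjugate) vanishes along $M^+$, the local analysis at $\pi_\Delta^{-1}(Q_i)$ and at $\bar T_0$ promotes this to $\mathrm{ord}_{M^+}(u)=k$, so $u=(g^+)^k v$ with $v$ of bidegree $(m,n)$, and (iv-a), (iv-b) at the $P_j$, $Q_i$, $T_i$ then force $\mathrm{div}(v)+\mathrm{div}(\iota_\Delta^\ast v)=\pi_\Delta^\ast\Gamma$, i.e. $v\cdot\iota_\Delta^\ast v$ a nonzero multiple of $\pi_\Delta^\ast\gamma$, and multiplying by $(g^+g^-)^k$ recovers (ii-b). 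Finally (iii) $\Rightarrow$ (i) reverses the extraction of $v$: pulling back $\gamma l^k=c_n^2-\delta c_{n-1}^2$ gives $\pi_\Delta^\ast\gamma\cdot(g^+g^-)^k=u\cdot\iota_\Delta^\ast u$, and since $\pi_\Delta^\ast\gamma$ does not vanish on $M^\pm$ ($L$ general, $\Gamma\ne L$) while (ii-a) holds, comparing vanishing orders along $M^+$ and $M^-$ gives $(g^+)^k\mid u$ after possibly interchanging $u$ and $\iota_\Delta^\ast u$; then $u=(g^+)^k v$ with $v$ of bidegree $(m,n)$ and $v\cdot\iota_\Delta^\ast v=\pi_\Delta^\ast\gamma$ exhibits $\pi_\Delta^\ast\Gamma=\mathrm{div}(v)+\mathrm{div}(\iota_\Delta^\ast v)$ as a splitting of type $(m,n)$.

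I expect the main obstacle to be the implication (v) $\Rightarrow$ (iii) (equivalently (iv) $\Rightarrow$ (ii)): the incidence-theoretic conditions (iv-a)--(iv-e) must be shown to force the algebraic identity (ii-b), and the delicate point there is proving that the two constants produced in the first step agree up to sign --- that is, $L\subseteq\{c_n^2=\delta c_{n-1}^2\}$ --- which seems to need the full force of (iv-a)--(iv-e) together (the passage through all the nodes, the order-$k$ contact with $\Gamma$ at the $Q_i$, and the prescribed branch behaviour at $T_0$) rather than any single one of them, followed by a careful reversal of the construction in (i) $\Rightarrow$ (ii). A secondary delicate point, already arising in (ii) $\Rightarrow$ (iv), is the local branch analysis at the ramification point $\bar T_0$ behind (iv-c) and (iv-d): one has to track precisely how $\iota_\Delta$ pairs the branches of $C_n$ and $C_{n-1}$ there, account for the parity-dependent unmatched branch, and verify that the surviving branches descend to branches that are respectively tangent and transverse to $\Delta$.
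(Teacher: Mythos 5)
The central implication of the theorem --- (v) $\Rightarrow$ (iii), i.e.\ deducing the algebraic identity $\gamma\,l^k=c_n^2-\delta\,c_{n-1}^2$ from the incidence conditions (iv-a)--(iv-e) --- is not actually proved in your proposal. You reduce it to two assertions: first that the constants match, i.e.\ $l\mid c_n^2-\delta c_{n-1}^2$, and second that, after extracting $(g^+)^k$ from $u=\pi_\Delta^\ast c_n+w\,\pi_\Delta^\ast c_{n-1}$, the conditions (iv-a), (iv-b) ``force'' $\mathrm{div}(v)+\mathrm{div}(\iota_\Delta^\ast v)=\pi_\Delta^\ast\Gamma$. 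For the first you give no argument and explicitly flag it as the main obstacle; the second is equally substantial (it amounts to the divisibility $\gamma\,l^k\mid c_n^2-\delta c_{n-1}^2$, which is essentially the whole content of the implication) and is also only asserted. So the hardest direction remains open in your outline. The paper closes it by a quite different mechanism: it forms the linear system $\Lambda$ generated by the three degree-$2n$ divisors $D_1=\Gamma+kL$, $D_2=2C_n$, $D_3=2C_{n-1}+\Delta$, uses (iv-a)--(iv-e) together with B\'ezout to show that all pairwise intersections are exactly $\{P_i,Q_i,T_0,\dots,T_d\}$ with prescribed local behaviour, resolves the base locus by an explicit sequence of blow-ups (once at each $P_i$, $k$ times over each $Q_i$, twice over $T_0$ and over each $T_i$), and observes that the proper transforms $\widetilde D_j$ have self-intersection $0$ and map with degree one, whence $\dim\Lambda=1$; this linear dependence of $\gamma l^k$, $c_n^2$ and $\delta c_{n-1}^2$ is exactly (ii-b) after rescaling. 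Some argument of this strength is needed; your restriction-to-$L$ computation only controls the two polynomials along $L$ and cannot by itself yield the global identity.

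A secondary, fixable error occurs in your (ii) $\Rightarrow$ (iv): you claim $D^+$ is smooth, identify it with the normalization of $\Gamma$, and conclude that $\Gamma$ has exactly $\alpha$ nodes, taking $P_1,\dots,P_\alpha$ to be all of them. Neither claim is justified ($D^+$ need not be smooth), and the count is false in relevant cases --- the $7$-nodal sextic of splitting type $(3,3)$ in the paper's examples has $7>\alpha=6$ nodes. The correct choice, as in the paper, is to take $P_1,\dots,P_\alpha$ to be the images under $\pi_\Delta$ of $D^+\cap D^-$ away from the ramification curve: the intersection number $D^+.D^-=m^2+n^2$ minus the $d$ points over the tangencies leaves exactly $\alpha$ nodes, with no smoothness hypothesis on $D^+$. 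Your remaining steps --- the eigenspace construction of $c_n,c_{n-1}$ for (i) $\Rightarrow$ (ii), the branch analysis at $\bar T_0$, and the use of (ii-a) in (iii) $\Rightarrow$ (i) to exclude the mixed distribution of $M^+$ and $M^-$ in $\mathrm{div}(u)$ --- do follow the paper's line and are sound in outline.
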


%

\begin{proof}
We first prove the equivalence of the conditions (\ref{thm. split 1}), (\ref{thm. split 2}) and (\ref{thm. split 3}). 
Assume that $\pi_{\Delta}^{\ast}\Gamma=D^++D^-$ for some $(m,n)$-divisor $D^+$ on $\bP^1\times\bP^1$ and $D^-:=\iota_{\Delta}^{\ast}D^+$. 
Let $L$ be a general tangent line of $\Delta$. 
Since $L$ is a rational curve and tangent to $\Delta$, 
$\pi_{\Delta}^{\ast}L=L^++L^-$ for some $(1,0)$-divisor $L^+$ and $L^-:=\iota_{\Delta}^{\ast}L^+$. 
Hence $\Gamma+k\, L$ is split into two $(n,n)$-divisors by $\pi_{\Delta}$, i.e. 
\[ \pi_{\Delta}^{\ast}(\Gamma+k\, L)=(D^++k\, L^+)+(D^-+k\, L^-). \]
Let $\tilde{d}^+=0$ be a defining equation of the $(n,n)$-divisor $D^++k\, L^+$, 
and $\tilde{d}^-=\iota_{\Delta}^{\ast}\tilde{d}^+$. 
Since $\tilde{d}^++\tilde{d}^-$ is invariant under $\iota_{\Delta}^{\ast}$, 
there exists $c_n\in H^0(\bP^2,\calO_{\bP^2}(n))$ such that 
$\pi_{\Delta}^{\ast}c_{n}=\tilde{d}^++\tilde{d}^-$. 
Let $R\subset\bP^1\times\bP^1$ be the ramification locus of $\pi_{\Delta}$, 
and let $r=0$ be the defining equation of $R$. 
Since $\tilde{d}^+-\tilde{d}^-=0$ along $R$, $\tilde{d}^+-\tilde{d}^-$ is divided by $r$. 
Moreover, since $\iota_{\Delta}^{\ast}r=-r$, $(\tilde{d}^+-\tilde{d}^-)/r$ is invariant under $\iota_{\Delta}^{\ast}$. 
Thus there is $c_{n-1}\in H^0(\bP^2,\calO_{\bP^2}(n-1))$ 
such that $\pi_{\Delta}^{\ast}c_{n-1}=(\tilde{d}^+-\tilde{d}^-)/r$. 
Since $\pi_{\Delta}^{\ast}(\Gamma+k\,L)$ is defined by $\pi_{\Delta}^{\ast}(c_n^2-\delta\,c_{n-1}^2)=0$, 
 condition (\ref{thm. split 2}) holds. 

It is trivial that  condition (\ref{thm. split 2}) implies condition (\ref{thm. split 3}). 
Now, assume that condition (\ref{thm. split 3}) holds. 
Then we have $\pi_{\Delta}^{\ast}(\Gamma+k\, L)=\widetilde{D}^++\widetilde{D}^-$, 
where $\widetilde{D}^{\pm}$ are the $(n,n)$-divisors on $\bP^1\times\bP^1$ defined by 
$\pi_{\Delta}^{\ast}c_n\pm r\,\pi_{\Delta}^{\ast}c_{n-1}=0$. 
By  condition (\ref{thm. split 2}-a), $\pi_{\Delta}^{\ast}c_n\pm r\,\pi_{\Delta}^{\ast}c_{n-1}$ are not divided by $\pi_{\Delta}^{\ast}l$ 
since $\pi_{\Delta}^{\ast}c_n- r\,\pi_{\Delta}^{\ast}c_{n-1}=\iota_{\Delta}^{\ast}(\pi_{\Delta}^{\ast}c_n+ r\,\pi_{\Delta}^{\ast}c_{n-1})$ and 
$\iota_{\Delta}^{\ast}\pi_{\Delta}^{\ast}l=\pi_{\Delta}^{\ast}l$. 
Since $\Gamma$ and $\pi_{\Delta}^{\ast}\Gamma$ are reduced, 
$\widetilde{D}^{\pm}=D^{\pm}+k\, L^{\pm}$ for some $(m,n)$-divisor $D^+$ and $D^-=\iota_{\Delta}^{\ast}D^+$. 
Therefore we obtain $\pi_{\Delta}^{\ast}\Gamma=D^++D^-$, 
and  conditions (\ref{thm. split 1}), (\ref{thm. split 2}) and (\ref{thm. split 3}) are equivalent. 

Next we prove that  condition (\ref{thm. split 1}) implies  condition (\ref{thm. split 4}). 
Suppose that $\pi_{\Delta}^{\ast}\Gamma=D^++D^-$ for an $(m,n)$-divisor $D^+$ and $D^-:=\iota_{\Delta}^{\ast}D^+$, and 
let $L$ be a general tangent line of $\Delta$. 
Since the intersection number $D^+.D^-$ is equal to $m^2+n^2$, and 
$D^+\cap D^-\cap R=\pi_{\Delta}^{-1}(\Gamma\cap\Delta)$ consists of $d=m+n$ points, 
there are $\alpha$ nodes of $\Gamma$, $P_1,\dots,P_{\alpha}$, such that 
\[ \{P_1,\dots,P_{\alpha},T_1,\dots,T_d\}=\pi_{\Delta}(D^+\cap D^-). \]
Since $L^+.D^-=m$, there are $Q_1,\dots,Q_m\in \Gamma$ such that 
\[ \{Q_1,\dots,Q_m\}=\pi_{\Delta}(L^+\cap D^-). \]
By (\ref{thm. split 2}), we have $\gamma\, l^k=c_n^2-\delta\, c_{n-1}^2$. 
Let $C_n$ and $C_{n-1}$ be the divisors on $\bP^2$ given by $c_n=0$ and $c_{n-1}=0$, respectively. 
We prove that $P_i$, $Q_j$, $C_n$ and $C_{n-1}$ satisfy (\ref{thm. split 4}-a), \dots, (\ref{thm. split 4}-e).

As in the above argument, $\tilde{d}^{\pm}=\pi_{\Delta}^{\ast}c_n\pm r\,\pi_{\Delta}^{\ast}c_{n-1}=0$ define 
$D^{\pm}+k\, L^{\pm}$, respectively. 
This implies that $\pi_{\Delta}^{\ast}c_n$ and $r\,\pi_{\Delta}^{\ast}c_{n-1}$ vanish at $(D^++L^+)\cap (D^-+L^-)$. 
Hence $C_n$ (resp. $C_{n-1}$) passes through $\pi_{\Delta}(D^+\cap D^-)$ (resp. $P_1,\dots,P_{\alpha}$). 
Thus $C_n$ and $C_{n-1}$ satisfy the condition (\ref{thm. split 4}-a). 

At an intersection $P\in D^+\cap L^-$,  we can write locally $\tilde{d}^+=s$ and $\tilde{d}^-=t^k$ since $L$ intersects transversally with $\Gamma$, 
where $s$ and $t$ are generators of the maximal ideal $\fm_P$ at $P$. 
Hence $\pi_{\Delta}^{\ast}C_{n}$ and $\pi_{\Delta}^{\ast}C_{n-1}$ are defined by $s+t^k=0$ and $s-t^k=0$ respectively. 
Since $P\in\bP^1\times\bP^1$ is an unramified point of $\pi_{\Delta}$, 
$C_n$ and $C_{n-1}$ satisfies condition (\ref{thm. split 4}-b).  

At the intersection $P$ of $L^+$ and $L^-$, 
we can write locally $\tilde{d}^+=s^k$ and $\tilde{d}^-=t^k$. 
Therefore, we have 
{\allowdisplaybreaks
\begin{eqnarray*}
\tilde{d}^++\tilde{d}^-&=&\left\{
\begin{array}{ll} 
(s+t){\displaystyle \prod_{i=1}^{(k-1)/2}}(s^2-(\zeta_{2k}^{2i-1}+\zeta_{2k}^{2k-2i+1})\,s\, t+t^2) & \mbox{if $k$ is odd} \\
{\displaystyle \prod_{i=1}^{k/2}}(s^2-(\zeta_{2k}^{2i-1}+\zeta_{2k}^{2k-2i+1})\,s\, t+t^2) & \mbox{if $k$ is even,}
 \end{array}
\right. \\
\tilde{d}^+-\tilde{d}^- &=&
\left\{\begin{array}{ll} 
(s-t){\displaystyle \prod_{i=1}^{(k-1)/2}}(s^2-(\zeta_{k}^{i}+\zeta_{k}^{k-i})\,s\, t+t^2) & \mbox{if $k$ is odd} \\
(s-t)(s+t){\displaystyle \prod_{i=1}^{(k-2)/2}}(s^2-(\zeta_{k}^{i}+\zeta_{k}^{k-i})\,s\, t+t^2) & \mbox{if $k$ is even,}
 \end{array}
\right. 
\end{eqnarray*}
}
where $\zeta_k$ is a primitive $k$th root of unity. 
Since $s-t=0$ defines $R$ at $P$,  conditions (\ref{thm. split 4}-c) and (\ref{thm. split 4}-d) in the assertion hold. 
It is clear that any two of $\tilde{d}^+$, $r$, $l^-$, $\tilde{d}^++\tilde{d}^-$ and $(\tilde{d}^+-\tilde{d}^-)/r$ have no common factors, 
where $l^-=0$ is a defining equation of $L^-$. 
Therefore condition (\ref{thm. split 4}-e) in the assertion holds. 
Hence  condition (\ref{thm. split 1}) implies condition (\ref{thm. split 4}). 

It is trivial that condition (\ref{thm. split 4}) implies condition (\ref{thm. split 5}). 
Finally, we prove that  condition (\ref{thm. split 5}) implies  condition (\ref{thm. split 3}). 
We assume that there exist $\alpha$ nodes $P_1,\dots,P_{\alpha}\in\Gamma$, 
$m$ intersection points $Q_1,\dots,Q_m\in\Gamma\cap L$ and two divisors $C_n, C_{n-1}\subset\bP^2$ satisfying  conditions (\ref{thm. split 4}-a), \dots, (\ref{thm. split 4}-e). 
Let $\gamma, \delta, l, c_n$ and $c_{n-1}$ be homogeneous polynomials defining $\Gamma, \Delta, L, C_n$ and $C_{n-1}$, respectively. 
Let $\Lambda$ be the linear system generated by three divisors $D_1:=\Gamma+k\, L, D_2:=2\,C_n$ and $D_3:=2\,C_{n-1}+\Delta$. 
It is sufficient to prove $\dim\Lambda=1$. 
We seek the base points of $\Lambda$ 
in order to resolve the indeterminacy locus of the rational map $\Phi_{\Lambda}:\bP^2\dashrightarrow\bP^N$ ($N=\dim\Lambda$) given by $\Lambda$. 
Note that, by condition (\ref{thm. split 4}-e), the number of base points of $\Lambda$ is finite. 
From  conditions (\ref{thm. split 4}-a) and (\ref{thm. split 4}-b), 
\begin{eqnarray*}
\Gamma.C_n &\geq& \sum_{i=1}^{\alpha}\I_{P_i}(\Gamma,C_n)+\sum_{i=1}^{m}\I_{Q_i}(\Gamma,C_n)+\sum_{i=1}^{d}\I_{T_i}(\Gamma,C_n) \\
&\geq& 2\,\alpha+m\,k+d \\
&=&n(m+n). 
\end{eqnarray*}
Hence we have $\Gamma\cap C_n=\{P_1,\dots,P_{\alpha}, Q_1,\dots,Q_m,T_1,\dots,T_d\}$, 
$\I_{P_i}(\Gamma,C_n)=2$ ($1\leq i\leq\alpha$) and $\I_{T_i}(\Gamma,C_n)=1$ ($1\leq i \leq d$). 
Moreover, $C_n$ is smooth at $P_i$ ($1\leq i\leq \alpha$) and $T_i$ ($1\leq i\leq d$). 
By the same argument, we have 
\begin{eqnarray*}
\Gamma\cap C_{n-1} &=& \{P_1,\dots,P_{\alpha},Q_1,\dots,Q_m\}, \\
L\cap C_n&=& L\cap C_{n-1} \ \  = \ \  \{Q_1,\dots,Q_m,T_0\}, \\
\Delta\cap C_n &=& \{T_0,T_1,\dots,T_d\}, \\
C_n\cap C_{n-1} &=& \{ P_1,\dots,P_{\alpha},Q_1,\dots,Q_m, T_0 \}. 
\end{eqnarray*}
Thus we obtain $D_i\cap D_{j}=\{P_1,\dots,P_{\alpha},Q_1,\dots,Q_m,T_0,\dots,T_d\}$ for $1\leq i<j\leq 3$. 
Moreover, the following conditions hold; 
{\allowdisplaybreaks
\begin{itemize}
\item[($\mathrm{P}_i$)] local branches of $\Gamma$, $C_n$ and $C_{n-1}$ at $P_i$ are smooth, and intersect transversally with each other; 
\item[($\mathrm{Q}_i$)] $\Gamma$, $C_n$ and $C_{n-1}$ intersect each other at $Q_i$ with multiplicity $k$; 
\item[($\mathrm{T}_0$)] local branches of $C_n$ and $C_{n-1}$ at $T_0$ are smooth, and 
$L$, $\Delta$ and local branches of $C_j$ ($j=n,n-1$) at $T_0$ intersect each other with multiplicity at most $2$; 
\item[($\mathrm{T}_i$)] $\Gamma$ and $C_n$ intersect transversally at $T_i$ ($1\leq i\leq d$). 
\end{itemize}
}
We resolve the  indeterminacy locus of $\Phi_{\Lambda}$ through blowing-ups. 
For a blowing-up $\sigma:\widetilde{X}\to X$, by abuse of notation, we use the same symbol $D$ for a divisor $D$ on $X$ to describe the pull back $\sigma^{\ast}D$. 
Let $\sigma_{P_i}:X_{P_i}\to\bP^2$ be the blowing-up at $P_i\in\bP^2$, and $E_{P_i}$ the exceptional divisor of $\sigma_{P_i}$. 
Since $m_{P_i}(D_j)=2$ for $i=1,2,3$, the divisors $
D_j-2\,E_{P_i}$ on $X_{P_i}$ have no intersection with each other over $P_i$. 

Let $\sigma_{T_0,1}:X_{T_0,1}\to\bP^2$ be the blowing-up at $T_{0}\in\bP^2$, and 
$E_{T_0,1}$ the exceptional divisor of $\sigma_{T_0,1}$. 
From conditions (\ref{thm. split 4}-c), (\ref{thm. split 4}-d) and ($\mathrm{T}_0$), 
$
D_j-k\, E_{T_0,1}\subset X_{T_0,1}$ ($j=1,2,3$)  intersect at one point, say $T_{0,1}$, 
and the local branches of $
D_j-k\, E_{T_0,1}$ at $T_{0,1}$ intersect transversally with each other. 
Let $\sigma_{T_0,2}:X_{T_0,2}\to X_{T_0,1}$ be the blowing-up at $T_{0,1}\in X_{T_0,1}$, and 
$E_{T_0,2}$ the exceptional divisor of $\sigma_{T_0,2}$. 
Then the divisors $D_j-k\, E_{T_0,1}-k\, E_{T_0,2}\subset X_{T_0,2}$ ($j=1,2,3$) do not intersect each other over $T_0$. 

Let $\sigma_{Q_i,1}:X_{Q_i,1}\to \bP^2$ be the blowing-up at $Q_i\in\bP^2$, and 
let $E_{Q_i,1}$ be the exceptional divisor of $\sigma_{Q_i,1}$. 
By condition (\ref{thm. split 4}-b), we have 
\[ \I_{Q_i}(D_j,D_{j'})=k+1 \ (j\ne j'), \ \ m_{Q_i}(D_1)=k, \ \  m_{Q_i}(D_j)=2 \  (j=2,3). \]
From conditions (\ref{thm. split 4}-b) and ($\mathrm{Q}_i$), the divisors $D_j-2\,E_{Q_i,1}$ on $X_{Q_i,1}$ intersect at one point, say $Q_{i,1}$. 
Moreover, we have  
\begin{eqnarray*}
 \I_{Q_{i,1}}(D_j-2\,E_{Q_i,1}, D_{j'}-2\,E_{Q_i,1})&=&k-1, \\
m_{Q_{i,1}}(D_1-2\,E_{Q_i,1})&=&k,  \\
m_{Q_{i,1}}(D_j-2\,E_{Q_i,1})&=&2 \ \ \ \ \  (j=2,3). 
\end{eqnarray*}
Similarly, let $\sigma_{Q_i,j}:X_{Q_i,j}\to X_{Q_i,j-1}$ ($j=2,\dots,k$) be such blowing-ups over $Q_i$, 
and let $E_{Q_i,j}$ be the exceptional divisors of $\sigma_{Q_i,j}$. 
Then the three divisors $D_j-\sum_{j=1}^{k}2\, E_{Q_i,j}$ do not intersect each other over $Q_i$. 

Let $\sigma_{T_i,1}:X_{T_i,1}\to \bP^2$ be the blowing-up at $T_i\in\bP^2$ for $1\leq i\leq d$, and 
let $E_{T_i,1}$ be  the exceptional divisor of $\sigma_{T_i,1}$. 
By condition ($T_i$), $D_j-E_{T_i,1}$ ($j=1,2,3$) intersect at one points, say $T_{i,1}$, 
and intersect transversally with each other. 
Let $\sigma_{T_i,2}:X_{T_i,2}\to X_{T_i,1}$ be the blowing-up at $T_{i,1}$, and 
let $E_{T_i,2}$ be the exceptional divisor of $\sigma_{T_i,2}$. 
Then $D_j-E_{T_i,1}-E_{T_i}$ ($j=1,2,3$) do not intersect each other over $T_i$. 

Let $\sigma:\widetilde{X}\to\bP^2$ be the composition of blowing-ups $\sigma_{P_i}$, 
$\sigma_{Q_i,j}$ ($j=1,\dots,k$), $\sigma_{T_0,j}$ ($j=1,2$), $\sigma_{T_i,j}$ ($j=1,2$), 
and 
\[ \widetilde{D}_j:=D_j-\sum_{i=1}^{\alpha}2\,E_{P_i}-\sum_{i=1}^{m}\sum_{j'=1}^{k}2\,E_{Q_i,j'}-k\,(E_{T_0,1}+E_{T_0,2})-\sum_{i=1}^d(E_{T_i,1}+E_{T_i,2}). \]
Put $\widetilde{\Lambda}$ as the linear system on $\widetilde{X}$ generated by $\widetilde{D}_j$ ($j=1,2,3$). 
Then the self-intersection number of $\widetilde{D}_j$ is equal to $0$, $\widetilde{D}_j^2=0$. 
Hence we have $\dim\mathrm{Im}(\Phi_{\widetilde{\Lambda}})=1$. 
Moreover, since $\widetilde{D}_j.E_{T_i,2}=1$ for $j=1,\dots,d$, 
we have $\deg\Phi_{\widetilde{\Lambda}}(\widetilde{D}_j)=1$. 
Therefore $\dim\Lambda=\dim\widetilde{\Lambda}=1$. 
\end{proof}

From Theorem~\ref{thm. split}, we have the following corollary. 

\begin{Cor}\label{lem. number of nodes}
Let $\Gamma$ be a $r$-nodal curve of degree $d$ with a simple contact conic $\Delta$. 
If $\Gamma$ is a splitting curve of type $(m,n)$ with respect to $\Delta$, 
then $2\,r\geq  m^2+n^2-d$. 
\end{Cor}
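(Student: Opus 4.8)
The plan is to read off the inequality directly from the characterization in Theorem~\ref{thm. split}, specifically from condition (\ref{thm. split 4}). Since $\Gamma$ is a splitting curve of type $(m,n)$ with respect to $\Delta$, Theorem~\ref{thm. split} applies and yields, for a general tangent line $L$ of $\Delta$, a collection of $\alpha=(m^2+n^2-m-n)/2$ \emph{distinct} nodes $P_1,\dots,P_\alpha$ of $\Gamma$ through which the divisors $C_n$ and $C_{n-1}$ pass (condition (\ref{thm. split 4}-a)). The key point is that these $\alpha$ nodes are genuinely among the $r$ nodes of $\Gamma$, so $\alpha\leq r$.

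The main (and essentially only) step is therefore to verify that the $P_i$ are pairwise distinct, i.e.\ that the list $P_1,\dots,P_\alpha$ really consists of $\alpha$ different nodes rather than $\alpha$ nodes counted with multiplicity. In the proof that (\ref{thm. split 1}) implies (\ref{thm. split 4}), the points $P_i$ are defined by the equality of sets $\{P_1,\dots,P_\alpha,T_1,\dots,T_d\}=\pi_\Delta(D^+\cap D^-)$, where $D^+.D^-=m^2+n^2$ and $D^+\cap D^-\cap R=\pi_\Delta^{-1}(\Gamma\cap\Delta)$ accounts for the $d$ tangent points. Since $\Gamma$ is nodal and $\Delta$ is a simple contact conic in general position relative to the $L$ chosen, one checks that $D^+$ and $D^-$ meet transversally away from the ramification locus, so $D^+\cap D^-$ consists of $m^2+n^2$ reduced points; removing the $d$ points lying over the tangent points leaves exactly $\alpha$ distinct points of $\bP^1\times\bP^1$, each mapping under $\pi_\Delta$ to a distinct node of $\Gamma$ (a node pulls back to two points of $X_\Delta$, exactly one of which can lie on $D^+\cap D^-$ once we fix which branch goes to $D^+$). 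Hence $\alpha$ is at most the total number $r$ of nodes of $\Gamma$.

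Combining $\alpha\leq r$ with $\alpha=(m^2+n^2-m-n)/2=(m^2+n^2-d)/2$ gives $2r\geq m^2+n^2-d$, which is the claim. I expect the only delicate point to be the transversality/distinctness argument in the previous paragraph; everything else is bookkeeping with the quantities already introduced in Theorem~\ref{thm. split}.
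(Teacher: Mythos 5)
Your approach is the same as the paper's: the corollary is stated there as an immediate consequence of Theorem~\ref{thm. split}, namely that condition (\ref{thm. split 4}) (equivalently, the argument in the proof of (\ref{thm. split 1})$\Rightarrow$(\ref{thm. split 4})) produces $\alpha=(m^2+n^2-m-n)/2=(m^2+n^2-d)/2$ distinct nodes of $\Gamma$, whence $\alpha\leq r$ and $2r\geq m^2+n^2-d$. One small slip in your verification of distinctness: for a node $P$ of $\Gamma$ whose two branches are separated by the splitting, \emph{both} points of $\pi_{\Delta}^{-1}(P)$ lie on $D^+\cap D^-$ (if at one preimage the branches go to $D^+$ and $D^-$, applying $\iota_{\Delta}$ shows the same happens at the other), so removing the $d$ points on the ramification locus leaves $m^2+n^2-d=2\alpha$ transverse intersection points mapping two-to-one onto $\alpha$ distinct nodes, not $\alpha$ points mapping bijectively. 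This does not affect your conclusion, since either count yields $\alpha\leq r$; alternatively one can simply bound $m^2+n^2=D^+.D^-\leq d+2r$, as each tangent point contributes exactly $1$ and each node at most $2$ to the intersection number.
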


Theorem~\ref{thm. split} enables us to determine the splitting type of a given curve $\Gamma$ with respect to a simple contact conic $\Delta$ by geometrical conditions (\ref{thm. split 4}-a), \dots, (\ref{thm. split 4}-e). 
However, the conditions (\ref{thm. split 4}-b), (\ref{thm. split 4}-c), (\ref{thm. split 4}-d) seems complicated.

The corollary below is useful to prove that a curve $\Gamma$ is not a splitting curve of type $(m,n)$ with respect to a simple contact conic $\Delta$. We prepare some terminology used in the statement. Let $X$ be a nonsingular variety, $P_1,\dots,P_r\in X$ be distinct $r$ points of $X$ and  $D\subset X$ be a divisor of $X$. 
We denote the  linear system consisting of effective divisors of $|D|$ through $P_1,\dots,P_r$ by $|D-P_1-\dots-P_r|$.
The points $P_1,\dots,P_r$ are called the \textit{assigned base points} of $\fd:=|D-P_1-\dots-P_r|$. 
Let $\sigma:\widetilde{X}\to X$ be the blowing-up at $P_1,\dots,P_r$, and 
$E_1,\dots,E_r$ the exceptional divisors of $\sigma$. 
We call base points of $|\sigma^{\ast}D-E_1-\dots-E_r|$, considered as infinitely near points of $X$, \textit{unassigned base points} of $\fd$.

\begin{Cor}\label{cor. dimension}
Let $\Gamma$ be a nodal curve of degree $d$ with a simple contact conic $\Delta$, 
and let $T_1,\dots,T_d$ be tangent points of $\Gamma$ and $\Delta$. 
Assume that $\Gamma$ is a splitting curve of type $(m,n)$ with respect to $\Delta$. 
Put $\alpha=(m^2+n^2-m-n)/2$. 
Then there are $\alpha$ nodes of $\Gamma$, $P_1,\dots,P_{\alpha}$, 
such that 
$\fd:=| n\,L-P_1-\dots-P_{\alpha}-T_1-\dots-T_d |$ has no unassigned base points, and 
$\dim\fd\geq n-m$, 
where $L$ is a line on $\bP^2$. 
\end{Cor}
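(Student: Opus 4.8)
The plan is to deduce Corollary~\ref{cor. dimension} directly from Theorem~\ref{thm. split}, specifically from the implication $(\ref{thm. split 1})\Rightarrow(\ref{thm. split 4})$, which already produces the divisor $C_n$ of degree $n$ passing through $\alpha$ well-chosen nodes $P_1,\dots,P_\alpha$ and all $d$ tangent points $T_1,\dots,T_d$. Thus $C_n$ is an effective member of $\fd=|n\,L-P_1-\dots-P_\alpha-T_1-\dots-T_d|$, so $\fd$ is nonempty. The content of the corollary is the two quantitative claims: that $\fd$ has no unassigned base points, and that $\dim\fd\ge n-m=k$. I would extract both from the blow-up analysis carried out at the end of the proof of Theorem~\ref{thm. split}, where precisely such a linear system is resolved.

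First I would fix a general line $L$ tangent to $\Delta$ at $T_0$ and invoke $(\ref{thm. split 1})\Rightarrow(\ref{thm. split 4})$ to obtain $P_1,\dots,P_\alpha$, $Q_1,\dots,Q_m$, and $C_n,C_{n-1}$. Then I would reuse the linear system $\Lambda=\langle \Gamma+k\,L,\ 2C_n,\ 2C_{n-1}+\Delta\rangle$ and its resolution $\widetilde{\Lambda}$ from that proof, where $\dim\Lambda=1$ was shown. The key observation is that the rational map $\Phi_\Lambda$ restricted appropriately relates to $\fd$: the pencil $\Lambda$ exhibits $2C_n = C_n + C_n$ and the divisor $\Gamma+k\,L$ meeting along the common locus $\{P_i,Q_j,T_0,\dots,T_d\}$ with the prescribed multiplicities. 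I would argue that the intersection behavior of $C_n$ with the fixed divisors $\Delta$ (at $T_1,\dots,T_d$) and with $\Gamma$ (doubly at each $P_i$) computed there, namely $\Gamma.C_n = n(m+n) = 2\alpha+mk+d$ with equality forcing $\I_{P_i}(\Gamma,C_n)=2$ and $\I_{T_i}(\Gamma,C_n)=1$ and $C_n$ smooth at these points, is exactly what guarantees that after blowing up $P_1,\dots,P_\alpha$ and $T_1,\dots,T_d$ the strict transform of $C_n$ has the expected intersection numbers with the exceptional divisors, so no further (unassigned) base points are forced on the subsystem $|\sigma^\ast(nL)-\sum E_{P_i}-\sum E_{T_i}|$.

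For the dimension bound, I would compute the virtual dimension: $\dim|nL| = \binom{n+2}{2}-1 = (n^2+3n)/2$, and imposing $\alpha = (m^2+n^2-m-n)/2$ simple points plus $d=m+n$ simple points cuts this down by at most $\alpha+d$, giving virtual dimension at least $(n^2+3n)/2 - (m^2+n^2-m-n)/2 - (m+n) = (3n+m-m^2-n^2+m+n)/2-(m+n)$; I would simplify this and check it equals $n-m$, so $\dim\fd \ge n-m$ whenever the $\alpha+d$ conditions are independent. Independence is equivalent to $\fd$ having the expected dimension, which follows once one knows the blow-up at the assigned base points does not introduce base locus that drops the dimension — but since we only claim a lower bound $\dim\fd\ge n-m$, it suffices that the conditions impose \emph{at most} $\alpha+d$ independent conditions, which is automatic; the lower bound on $\dim\fd$ is then immediate from Riemann--Roch / the dimension count on $\bP^2$. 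The only real work is the no-unassigned-base-points claim.

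The main obstacle I anticipate is making the "no unassigned base points" assertion precise and airtight: one must show that in the resolution $\sigma:\widetilde X\to\bP^2$ blowing up only $P_1,\dots,P_\alpha$ (once each) and $T_1,\dots,T_d$ (once each), the linear system $|\sigma^\ast(nL)-\sum_{i=1}^\alpha E_{P_i}-\sum_{i=1}^d E_{T_i}|$ is base-point free along all exceptional divisors. I would do this by exhibiting, for each exceptional divisor $E_{P_i}$ (resp. $E_{T_i}$), two members of $\fd$ — namely $C_n$ and a second curve in the pencil obtained from $\Lambda$, e.g. a suitable combination extracted from $\Gamma+k\,L$ or $2C_{n-1}+\Delta$ — whose strict transforms meet $E_{P_i}$ (resp. $E_{T_i}$) in distinct points, or by directly citing the transversality statements $(\mathrm{P}_i)$, $(\mathrm{T}_i)$ and the equalities $\widetilde D_j.E_{T_i,2}=1$ established in the proof of Theorem~\ref{thm. split}, which already encode that after one blow-up the generic member of $\widetilde\Lambda$ meets each $E_{T_i}$ transversally at a moving point. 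Translating these facts about $\widetilde\Lambda$ into the cleaner statement about $\fd$ is bookkeeping, but it needs care because $\fd$ lives on the surface blown up at fewer points than $\widetilde X$.
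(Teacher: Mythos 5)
There is a genuine gap, and it sits in the part you dismiss as ``immediate''. The virtual-dimension count does not give the bound $\dim\fd\ge n-m$: we have $\dim|n\,L|=(n^2+3n)/2$ and the number of assigned points is $\alpha+d=(m^2+n^2+m+n)/2$, so the naive lower bound is $(n^2+3n)/2-(m^2+n^2+m+n)/2=n-\tfrac{m(m+1)}{2}$, which equals $n-m$ only for $m=1$ and is strictly smaller than $n-m$ for every $m\ge 2$. In the case that actually matters later, $(m,n)=(2,4)$ with $\alpha=7$, $d=6$, one gets $14-13=1$, whereas the corollary asserts $\dim\fd\ge 2$; this excess over the expected dimension (the fact that the $13$ points are in special position) is precisely the content used in Theorem~\ref{thm. conical}, so no Riemann--Roch/parameter count on $\bP^2$ can yield it. The paper's proof produces the bound differently: it adds $k=n-m$ general tangent lines $L_1,\dots,L_k$ of $\Delta$, so that $\Gamma+L_1+\dots+L_k$ splits of type $(n,n)$, and applies Theorem~\ref{thm. split} to obtain a degree-$n$ curve through $P_1,\dots,P_\alpha$, $T_1,\dots,T_d$ and the tangency points of the $L_i$; since $\{P_1,\dots,P_\alpha\}=\pi_{\Delta}(D^+\cap D^-)$ does not depend on the chosen lines while the tangency points of the $L_i$ move freely on $\Delta$, letting the lines vary forces $\dim\fd\ge k$.

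The base-point-freeness part is also not established by your plan. The pencil $\Lambda=\langle\Gamma+k\,L,\,2\,C_n,\,2\,C_{n-1}+\Delta\rangle$ consists of degree-$2n$ divisors; neither $\Gamma+k\,L$ nor $2\,C_{n-1}+\Delta$ (nor pieces extracted from them) is a member of $\fd$, so the resolution of $\Lambda$ gives you, in effect, only the single member $C_n$ of $\fd$ for each choice of $L$, and knowing the base scheme of $\Lambda$ says nothing about what \emph{all} degree-$n$ curves through $P_1,\dots,P_\alpha,T_1,\dots,T_d$ do. To rule out an unassigned base point one must exhibit, for each candidate point $P$, a member of $\fd$ missing $P$; the paper does this explicitly by choosing a tangent line $L_0$ of $\Delta$ through $P$ with $P^+\notin L_0^-$ and $P^-\notin L_0^+$, and taking the degree-$n$ curve whose pullback is defined by $(l_0^+)^k\,d^+\,+\,(l_0^-)^k\,d^-$; this curve lies in $\fd$ but avoids $P$. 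Your proposal contains neither this construction nor a substitute for it, so both halves of the corollary remain unproved as written.
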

\begin{proof}
By the assumption, $\pi_{\Delta}^{\ast}\Gamma=D^++D^-$ for some $(m,n)$-divisor $D^+$ and $D^-=\iota_{\Delta}^{\ast}D^+$. 
Let $k:=n-m$ and $L_1,\dots,L_k$ be general  lines tangent to $\Delta$. 
Then $\widetilde{\Gamma}:=\Gamma+L_1+\dots+L_k$ is a splitting curve of type $(n,n)$ with respect to $\Delta$. 
By Theorem~\ref{thm. split}, there exist $n^2-n$ nodes of $\widetilde{\Gamma}$, $P_1,\dots,P_{n^2-n}$, and a divisor $C_n$ of degree $n$ which has no common components with $\Delta$ and passes through $P_1,\dots, P_{n^2-n}$ and all tangent points of $\widetilde{\Gamma}$ and $\Delta$. 
As in the proof of Theorem~\ref{thm. split}, we may assume that $\{P_1,\dots,P_{\alpha}\}=\pi_{\Delta}(D^+\cap D^-)\subset \Gamma$. 
Hence $P_1,\dots,P_{\alpha}$ are independent of the  choice of $L_1,\dots,L_k$. 
Thus the generality of $L_1,\dots,L_k$ implies that $\dim| n\,L-P_1-\dots-P_{\alpha}-T_1-\dots-T_d|\geq k=n-m$. 

Let $P\in\bP^2\setminus\{P_1,\dots,P_\alpha\}$ be a point not on $\Delta$. 
Let $L_0$ be a tangent line of $\Delta$ through $P$. 
Put $\pi_{\Delta}^{\ast}L_0=L_0^++L_0^-$, where $L_0^+$ is a $(1,0)$-curve and $L_0^-=\iota_{\Delta}^{\ast}L_0^+$, 
and put $d^{\pm}=0$ and $l_0^{\pm}=0$ as defining equations of $D^{\pm}$ and $L_0^{\pm}$, respectively. 
If $P\in\Gamma$ and $\pi^{-1}(P)=\{P^+,P^-\}$ with $P^+\in D^+$ and $P^-\in D^-$, 
then we can choose $L_0$ such that $P^+\not\in L_0^-$ and $P^-\not\in L_0^+$. 
Then the curve $C_n$ of degree $n$ on $\bP^2$ such that $\pi_{\Delta}^{\ast}C_n$ is defined by $(l_0^+)^k\,d^++(l_0^-)^k\,d^-=0$  passes through  $P_1,\dots,P_{\alpha}, T_1,\dots,T_d$, but not $P$. 
Hence $\fd$ has no unassigned base points. 
\end{proof}

\section{Quartic surfaces and sextic curves with a contact conic}\label{sec. correspondence}

In this section, we prove that there exists a surjection from the set of normal quartic surfaces in $\bP^3$ with a node (i.e. and $A_1$-type singularity) at a point $P\in\bP^3$, which contain no lines through $P$, to the set of sextic curves on $\bP^2$ with an even contact conic (Theorem~\ref{thm. corr}). 
We will apply this result to prove a simple criterion for a nodal sextic curve with a contact conic to be a splitting curve of type $(2,4)$ in Section~\ref{sec. quartic surfaces}. 
In order to prove Theorem~\ref{thm. corr}, we consider a birational map between $\bP^1$-bundles over $\bP^2$ given by Sumihiro \cite{sumihiro}. 

For $i\in\bZ_{\geq 1}$, we put $\calE_i:=\calO_{\bP^2}\oplus\calO_{\bP^2}(-i)$. 
Let $\pbp_i:\pb_i\to\bP^2$ be the $\bP^1$-bundle with $\pb_i:=\bP_{\bP^2}(\calE_i)$, 
and let $\calL_i$ be the tautological bundle on $\pb_i$ with $\pbp_{i \ast}\calL_i\cong\calE_i$. 
Let $(x_0:x_1:x_2)$ be homogeneous coordinates of $\bP^2$, and 
let $U_j$ be the affine open subset of $\bP^2$ given by $x_j\ne 0$ for $j=0,1,2$. 
Let $x_{j,0}$ and $x_{j,i}$ denote the local basis of $\calO_{\bP^2}$ and $\calO_{\bP^2}(-i)$ on $U_j$ for $j=0,1,2$, respectively. 
By abuse of notation, we regard $(x_{j,0}:x_{j,i})$ as homogeneous coordinates of the fibers of $\pbp_i$ over $U_j$. 
Let $A_j$ denote the coordinate ring $H^0(U_j,\calO_{\bP^2}|_{U_j})$ of $U_j$. 
We prove the following lemma by the same argument of \cite[V Proposition~2.6]{hartshorne}. 

\begin{Lem}\label{lem. section}
If $S\subset\pb_i$ is a section of $\pbp_i$, then $\calO_{\pb_i}(S)$ is isomorphic to either $\calL_i$ or $\calL_i\otimes \pbp_i^{\ast}\calO_{\bP^2}(i)$. 
Moreover, 
if $\calO_{\pb_i}(S)\cong\calL_i$, then $S$ is defined by $x_{j,0}=0$ over $U_j$ for $j=0,1,2$; and if $\calO_{\pb_i}(S)\cong\calL_i\otimes \pbp_i^{\ast}\calO_{\bP^2}(i)$, then $S$ satisfies one of the following conditions; 
\begin{enumerate}
\item 
$S$ is defined by $x_{j,i}=0$ over $U_j$; 
\item
there exists an effective divisor $D_S$ of degree $i$ on $\bP^2$ such that $S$ is defined by $g_{j}\, x_{j,0}-x_{j,i}=0$ over $U_j$, 
where $g_j=0$ is a defining equation of $D_S$ over $U_j$. 
\end{enumerate} 
In particular, a section $S_{i,0}$ of $\pbp_i$ satisfying $\calO_{\pb_i}(S_{i,0})\cong\calL_i$ is uniquely determined. 
\end{Lem}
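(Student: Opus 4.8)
The plan is to use, as in the proof of \cite[V, Proposition~2.6]{hartshorne}, the dictionary between sections of the $\bP^1$-bundle $\pbp_i\colon\pb_i\to\bP^2$ and quotient line bundles of $\calE_i=\calO_{\bP^2}\oplus\calO_{\bP^2}(-i)$. A section $S$ is the image of a morphism $s\colon\bP^2\to\pb_i$ with $\pbp_i\circ s=\mathrm{id}$; pulling back the tautological surjection $\pbp_i^{\ast}\calE_i\twoheadrightarrow\calL_i$ along $s$ produces a surjection $\calE_i\twoheadrightarrow\calN:=s^{\ast}\calL_i\to 0$ onto a line bundle $\calN$ on $\bP^2$ (see \cite[II.7]{hartshorne}), and conversely every such quotient yields a section; under this correspondence $\pbp_i|_S$ is an isomorphism and $\calL_i|_S\cong\calN$. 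So the proof reduces to (a) listing the quotient line bundles $\calN$, (b) recovering $\calO_{\pb_i}(S)$ from $\calN$, and (c) writing the resulting sections in the coordinates $(x_{j,0}:x_{j,i})$ over $U_j$.

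For (a): since $\mathrm{Pic}\,\bP^2=\bZ$ we may write $\calN=\calO_{\bP^2}(c)$, and a surjection $\calE_i\twoheadrightarrow\calO_{\bP^2}(c)$ amounts to a pair $(s_0,s_1)\in H^0(\calO_{\bP^2}(c))\times H^0(\calO_{\bP^2}(c+i))$ with empty common zero locus. If $c\geq 1$ then $\{s_0=0\}$ is a nonempty plane curve and the restriction of $s_1$ to it is a section of a line bundle of positive degree on a projective curve, hence vanishes somewhere (or identically); either way $s_0$ and $s_1$ share a zero, a contradiction. If $c<-i$ both cohomology groups vanish; if $-i\leq c\leq -1$ then $s_0=0$ and $s_1$ must be a nowhere-vanishing section of $\calO_{\bP^2}(c+i)$, forcing $c=-i$ with $s_1$ a nonzero constant, i.e. the projection $\mathrm{pr}_2$; if $c=0$ then $s_0$ is a nonzero constant, normalized to $1$, and $s_1=g\in H^0(\calO_{\bP^2}(i))$ is arbitrary. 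Hence $\calN\cong\calO_{\bP^2}$ or $\calN\cong\calO_{\bP^2}(-i)$. Moreover $\mathrm{Hom}(\calE_i,\calO_{\bP^2}(-i))\cong H^0(\calO_{\bP^2}(-i))\oplus H^0(\calO_{\bP^2})\cong\bC$, so the quotient onto $\calO_{\bP^2}(-i)$, and hence the section realizing it, is unique; this is $S_{i,0}$, giving the ``in particular'' assertion.

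For (b): for a fibre $F$ one has $S\cdot F=\calL_i\cdot F=1$, so $\calO_{\pb_i}(S)\otimes\calL_i^{-1}$ is trivial on every fibre and therefore of the form $\pbp_i^{\ast}\calM$ for a line bundle $\calM=\calO_{\bP^2}(b)$. Restricting to $S$ and using $\calO_{\pb_i}(S)|_S\cong N_{S/\pb_i}$ together with the relative Euler sequence (which gives $T_{\pb_i/\bP^2}\cong\calL_i^{\otimes 2}\otimes\pbp_i^{\ast}(\det\calE_i)^{-1}$, hence $N_{S/\pb_i}\cong\calN^{\otimes 2}\otimes\calO_{\bP^2}(i)$ since $\det\calE_i=\calO_{\bP^2}(-i)$), one obtains $\calM\cong\calN\otimes\calO_{\bP^2}(i)$. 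Thus $\calN\cong\calO_{\bP^2}(-i)$ gives $\calO_{\pb_i}(S)\cong\calL_i$, and $\calN\cong\calO_{\bP^2}$ gives $\calO_{\pb_i}(S)\cong\calL_i\otimes\pbp_i^{\ast}\calO_{\bP^2}(i)$. For (c): the quotient $\mathrm{pr}_2$ cuts out $\{x_{j,0}=0\}$; the quotient $\mathrm{pr}_1$ (the subcase $g=0$) cuts out $\{x_{j,i}=0\}$, which is (i); and the quotient determined by $g\neq 0$, with $D_S:=\{g=0\}$ of degree $i$, cuts out $\{g_j\,x_{j,0}-x_{j,i}=0\}$ over $U_j$, which is (ii) --- each being a direct local computation from the definition of the fibre coordinates.

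I expect the heart of the matter to be step (a): ruling out the quotients $\calO_{\bP^2}(c)$ with $c>0$ via the intersection behaviour of plane curves is exactly what forces the short list of sections, and is the place where working over $\bP^2$ rather than over a curve (as in Hartshorne) makes the statement sharper. Step (c) is routine but must be carried out with the conventions for $(x_{j,0}:x_{j,i})$ and the identification $\calL_i=\calO_{\pb_i}(1)$ kept straight, so that the stated equations come out verbatim with the right signs.
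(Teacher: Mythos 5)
Your argument is correct, and its backbone is the same as the paper's: both proofs rest on the correspondence of \cite[II Proposition~7.12]{hartshorne} between sections of $\pbp_i$ and surjections $\calE_i\to\calO_{\bP^2}(k)$, both rule out all $k$ except $k=0,-i$ by the observation that the two components of such a map would otherwise have a common zero on $\bP^2$ (you spell this out via positivity/Bézout, the paper states it more tersely), and both read off the local equations $x_{j,0}=0$, $x_{j,i}=0$, $g_j\,x_{j,0}-x_{j,i}=0$ from the kernel of the surjection. The one step you handle genuinely differently is the identification of $\calO_{\pb_i}(S)$: the paper pushes forward the sequence $0\to\calL_i\otimes\calO_{\pb_i}(-S)\to\calL_i\to\calL_i\otimes\calO_S\to0$, shows $\pbp_{i\ast}(\calL_i\otimes\calO_{\pb_i}(-S))$ is invertible and equal to $\calO_{\bP^2}$ or $\calO_{\bP^2}(-i)$ according to $k$, and concludes $\calO_{\pb_i}(S)\cong\calL_i$ or $\calL_i\otimes\pbp_i^{\ast}\calO_{\bP^2}(i)$; you instead write $\calO_{\pb_i}(S)\cong\calL_i\otimes\pbp_i^{\ast}\calM$ by comparing fibre degrees and pin down $\calM$ through $\calO_{\pb_i}(S)|_S\cong N_{S/\pb_i}\cong s^{\ast}T_{\pb_i/\bP^2}$ and the relative Euler sequence. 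Both routes are complete; the paper's stays entirely within pushforward formalism and yields the invertibility statements it reuses implicitly, while yours is a pleasant shortcut if one is comfortable with $N_{S/\pb_i}\cong s^{\ast}T_{\pb_i/\bP^2}$ and the formula $T_{\pb_i/\bP^2}\cong\calL_i^{\otimes2}\otimes\pbp_i^{\ast}(\det\calE_i)^{-1}$. Your uniqueness argument for $S_{i,0}$ via $\dim\mathrm{Hom}(\calE_i,\calO_{\bP^2}(-i))=1$ is also slightly more direct than the paper's, which deduces uniqueness from the explicit equations $x_{j,0}=0$.
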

\begin{proof}
By \cite[II Proposition~7.12]{hartshorne}, a section $s:\bP^2\to\pb_i$ of $\pbp_i$ corresponds to a surjection $\eta:\calE_i\to\calO_{\bP^2}(k)$ for some $k\in\bZ$, and 
we have $s^{\ast}\calL_i\cong\calO_{\bP^2}(k)$. 
The kernel of $\eta$ gives the ideal sheaf of the image of $s$ 
since a surjection $\eta:\calE_i\to\calO_{\bP^2}(k)$ induces a surjection $S(\eta):S(\calE_i)\to S(\calO_{\bP^2}(k))$ which defines $s:\bP^2\to\pb_i$, 
where $S(\bullet)$ denotes the symmetric algebra of a vector bundle $\bullet$. 
A homomorphism $\eta:\calE_i\to \calO_{\bP^2}(k)$ corresponds to a piar $(\eta_0,\eta_i)$, 
where $\eta_0$ and $\eta_i$ are global sections of $\calO_{\bP^2}(k)$ and $\calO_{\bP^2}(k+i)$, respectively. 
In particular, $\eta$ is zero at $\eta_0=\eta_i=0$. 
Hence, if $\eta$ is surjective, then $k=-i$ or $k=0$. 

Let $s:\bP^2\to\pb_i$ be a section of $\pbp_i$ given by a surjection $\eta:\calE_i\to\calO_{\bP^2}(k)$, 
and let $S\subset\pb_i$ be the image of $s$. 
We consider the exact sequence 
\[ 0\to\calL_i\otimes\calO_{\pb_i}(-S)\to\calL_i\to\calL_i\otimes\calO_S\to 0. \]
Taking $\pbp_{i \ast}$, we have the following exact sequence 
\begin{equation}\label{eq. exact}
 0\to \pbp_{i \ast}(\calL_i\otimes\calO_{\pb_i}(-S))\to\calE_i\to\calO_{\bP^2}(k)\to 0 
\end{equation}
with $0$ on the right because $R^1\pbp_{i \ast}(\calL_i\otimes\calO_{\pb_i}(-S))=0$ (cf. \cite[V Lemma~2.4]{hartshorne}). 
Since the degree of $\calL_i\otimes\calO_{\pb_i}(-S)$ along  fibers is $0$, 
$\pbp_{i \ast}(\calL_i\otimes\calO_{\pb_i}(-S))$ is invertible by \cite[III Corollary~12.9]{hartshorne}. 
Moreover, we obtain 
\[ \calL_i\otimes\calO_{\pb_i}(-S)\cong \pbp_i^{\ast}\pbp_{i \ast}(\calL_i\otimes\calO_{\pb_i}(-S)). \]

If $k=-i$, then the exact sequence (\ref{eq. exact}) implies $\pbp_{i \ast}(\calL_i\otimes\calO_{\pb_i}(-S))\cong\calO_{\bP^2}$. 
Thus $\calL_i\cong\calO_{\pb_i}(S)$. 
Moreover, since $\eta$ corresponds to $(0,\eta_i)$ for some $\eta_i\in\bC^{\times}$, 
the kernel of $\eta$ is locally generated by $x_{j,0}$ on $U_j$. 
Hence the section $S$ is locally defined by $x_{j,0}=0$ over $U_j$. 

If $k=0$, then the exact sequence (\ref{eq. exact}) implies $\pbp_{i \ast}(\calL_i\otimes\calO_{\pb_i}(-S))\cong\calO_{\bP^2}(-i)$. 
Therefore, $\calO_{\bP_i}(S)\cong\calL_i\otimes \pbp_i^{\ast}\calO_{\bP^2}(i)$. 
Suppose $\eta$ corresponds to $(\eta_0, \eta_i)$ for $\eta_k\in H^0(\bP^2,\calO_{\bP^2}(k))$, 
then $\eta_0\in\bC^{\times}$ since $\eta$ is surjective. 
If $\eta_i=0$, then $\eta|_{U_j}:A_j\,x_{j,0}\oplus A_j\,x_{j,i}\to A_j$ is given by $x_{j,0}\mapsto \eta_0$ and $x_{j,i}\mapsto 0$; 
hence $S$ is locally defined by $x_{j,i}=0$ over $U_j$. 
Suppose that $\eta_i\ne 0$. 
Let $D_S$ be the effective divisor on $\bP^2$ corresponding to $\eta_i\in H^0(\bP^2,\calO_{\bP^2}(i))$. 
In this case, the image of the second factor $\calO_{\bP^2}(-i)$ of $\calE_i$ by $\eta$ is the ideal sheaf of $D_S$. 
Thus $\eta$ is locally given by $x_{j,0}\mapsto \eta_0$ and $x_{j,i}\mapsto \eta_0\,g_j$ over $U_j$, where $g_j=0$ is a defining equation of $D_S$ on $U_j$. 
Therefore $S$ is defined by $g_j\,x_{j,0}-x_{j,i}=0$ over $U_j$. 
\end{proof}

Let $C_d\subset\bP^2$ be a reduced curve of degree $d$ locally defined by $f_j=0$ on $U_j$. 
Let $Y_{i,d}\subset\pb_i$ denote the closed subscheme $S_{i,0}\cap \pbp_i^{\ast}C_d$, 
and let $\sigma_{i,d}:V_{i,d}\to\pb_i$ be the blowing-up along $Y_{i,d}$. 
Put $E_{i,d}$ as the exceptional divisor of $\sigma_{i,d}$. 
By elementary transformations of vector bundles \cite[(1.5) Theorem]{sumihiro}, 
the divisor $(\pbp_i\circ\sigma_{i,d})^{\ast}C_d-E_{i,d}$ is contractible. 
Let $\sigma'_{i,d}:V_{i,d}\to \pb_{i,d}'$ be the contraction of $(\pbp_i\circ\sigma_{i,d})^{\ast}C_d-E_{i,d}$. 
By \cite[(1.5) Theorem]{sumihiro} again, there exists a $\bP^1$-bundle structure $\pbp'_{i,d}:\pb_{i,d}'\to \bP^2$ with $\pbp_i\circ\sigma_{i,d}=\pbp'_{i,d}\circ\sigma'_{i,d}$. 
Hence we obtain the following commutative diagram: 
\begin{equation}\label{diagram} 
\begin{diagram}
\node[2]{V_{i,d}} \arrow{sw,t}{\sigma_{i,d}} \arrow{se,t}{\sigma'_{i,d}} \\
\node{\pb_i} \arrow{se,b}{\pbp_i} \arrow[2]{e,t,..}{\theta_{i,d}} \node[2]{\pb'_{i,d}} \arrow{sw,b}{\pbp'_{i,d}} \\
\node[2]{\bP^2}
\end{diagram} 
\end{equation}

\begin{Prop}\label{prop. birational}
The $\bP^1$-bundle $\pb'_{i,d}$ is isomorphic to $\pb_{i+d}$, and 
$\pbp'_{i,d}:\pb_{i,d}\to\bP^2$ coincides with $\pbp_{i+d}:\pb_{i+d}\to\bP^2$.  
Moreover, the birational map $\theta_{i,d}=\sigma'_{i,d}\circ(\sigma_{i,d})^{-1}:\pb_{i}\dashrightarrow\pb'_{i,d}\cong\pb_{i+d}$ is locally given by $\theta_{i,d}^{\ast}(x'_{j,0})= x_{j,0}$ and $\theta_{i,d}^{\ast}(x'_{j,i+d})=f_j\,x_{j,i}$ over $U_j$, 
where $x'_{j,0}=x_{j,0}$ and $x'_{j,i+d}=x_{j,i+d}$ are local basis of $\calE_{i+d}$ over $U_j$. 
\end{Prop}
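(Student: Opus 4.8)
## Proof proposal for Proposition~\ref{prop. birational}

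The plan is to work locally over each chart $U_j$ of $\bP^2$ and explicitly describe the blow-up $\sigma_{i,d}$ and the contraction $\sigma'_{i,d}$ in coordinates, then glue. Over $U_j$, the section $S_{i,0}$ is given by $x_{j,0}=0$ (by Lemma~\ref{lem. section}), and $\pbp_i^{\ast}C_d$ is given by $f_j=0$; so the center $Y_{i,d}=S_{i,0}\cap\pbp_i^{\ast}C_d$ is the codimension-two locus $\{x_{j,0}=f_j=0\}$ inside $\pb_i|_{U_j}\cong U_j\times\bP^1$ with fiber coordinates $(x_{j,0}:x_{j,i})$. First I would write down the blow-up $V_{i,d}$ along this center in the standard way: it is covered by two charts, one with coordinates where $x_{j,0}=f_j\cdot u$ and the exceptional divisor $E_{i,d}$ is $f_j=0$, and one with $f_j=x_{j,0}\cdot v$ and $E_{i,d}$ is $x_{j,0}=0$. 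In the first chart the strict transform of $\pbp_i^{\ast}C_d$ has been separated from $E_{i,d}$; the divisor $(\pbp_i\circ\sigma_{i,d})^{\ast}C_d-E_{i,d}$ is then visibly a ruled surface over $\{f_j=0\}\subset U_j$ that can be blown down in the fiber direction, and the contraction $\sigma'_{i,d}$ identifies it to a section of the new bundle. Carrying this out, the natural coordinate on the contracted bundle $\pb'_{i,d}|_{U_j}$ is $(x'_{j,0}:x'_{j,i+d})$ with $x'_{j,0}=x_{j,0}$ and $x'_{j,i+d}=f_j\, x_{j,i}$, which is exactly the claimed formula for $\theta_{i,d}$.

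Next I would check the transition functions to identify $\pb'_{i,d}$ with $\pb_{i+d}$. On the overlap $U_j\cap U_k$, the coordinate $x_{j,0}$ transforms trivially (it is the local basis of $\calO_{\bP^2}$), while $x_{j,i}$ transforms by the transition function of $\calO_{\bP^2}(-i)$, i.e.\ multiplication by $(x_k/x_j)^i$ (in suitable normalization). Since $f_j$ is a local equation of the degree-$d$ curve $C_d$, we have $f_j=(x_k/x_j)^d f_k$ on the overlap, hence $x'_{j,i+d}=f_j\,x_{j,i}$ transforms by $(x_k/x_j)^{i+d}$ — precisely the transition function of $\calO_{\bP^2}(-(i+d))$. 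Therefore the pair $(x'_{j,0},x'_{j,i+d})$ furnishes exactly the gluing data of $\calE_{i+d}=\calO_{\bP^2}\oplus\calO_{\bP^2}(-(i+d))$, so $\pb'_{i,d}\cong\bP_{\bP^2}(\calE_{i+d})=\pb_{i+d}$ as $\bP^1$-bundles over $\bP^2$, and since both projections are induced by $\pbp_i\circ\sigma_{i,d}$, the structure maps agree: $\pbp'_{i,d}=\pbp_{i+d}$. That the construction is independent of the chosen open cover, and that the two contractions/blow-downs appearing in Sumihiro's elementary transformation match ours, follows from \cite[(1.5) Theorem]{sumihiro}, which I would cite rather than reprove.

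I expect the main obstacle to be bookkeeping in the local computation: one must be careful that the blow-up of the codimension-two center $Y_{i,d}$ — which meets each fiber in a single reduced point but varies over the divisor $C_d$ — is correctly described, that the divisor $(\pbp_i\circ\sigma_{i,d})^{\ast}C_d-E_{i,d}$ really is a section-like divisor that can be contracted (this is where $Y_{i,d}\subset S_{i,0}$ with $S_{i,0}$ a section is used: it guarantees the relevant divisor restricts to a degree-$0$, hence contractible, sub-bundle over $C_d$), and that after contraction one genuinely recovers a $\bP^1$-bundle rather than something with worse singularities. All of this is local and is handled by Sumihiro's theorem together with the explicit normal-coordinate picture above; the only real content is verifying that $f_j$ enters the new fiber coordinate linearly, which is immediate from the equation $g_j x_{j,0}-x_{j,i}=0$ / $f_j=0$ description of the center and is what upgrades $\calO_{\bP^2}(-i)$ to $\calO_{\bP^2}(-i-d)$.
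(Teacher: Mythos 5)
Your proposal is correct and follows essentially the same route as the paper: both identify the transformed bundle through the new local basis $x'_{j,0}=x_{j,0}$, $x'_{j,i+d}=f_j\,x_{j,i}$ over $U_j$, and then use that $f_j$ is a local generator of the ideal sheaf $\calO_{\bP^2}(-d)$ of $C_d$ to recognize the glued bundle as $\calE_{i+d}$, hence $\pb'_{i,d}\cong\pb_{i+d}$ with the stated formula for $\theta_{i,d}$. The only difference is cosmetic: where you carry out the blow-up and contraction explicitly in charts, the paper obtains that local description by quoting the proof of \cite[(1.3) Lemma]{sumihiro}, with the structural facts (contractibility of $(\pbp_i\circ\sigma_{i,d})^{\ast}C_d-E_{i,d}$ and the $\bP^1$-bundle structure) supplied by \cite[(1.5) Theorem]{sumihiro} in both treatments.
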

\begin{proof}
Since $S_{i,0}$ is defined by $x_{j,0}=0$ over $U_j$, $Y_{i,d}$ is defined by $x_{j,0}=0$ and $f_j\,x_{j,i}=0$ over $U_j$ for $j=0,1,2$. 
By the proof of \cite[(1.3) Lemma]{sumihiro}, $\pb'_{i,d}$ is the $\bP^1$-bundle associated with the vector bundle locally generated by $x'_{j,0}=x_{j,0}$ and $x'_{j,i+d}=f_j\, x_{j,i}$ on $U_j$. 
Since $f_j$ is a local basis of the ideal sheaf $\calO_{\bP^2}(-d)\subset\calO_{\bP^2}$ of $C_d$ over $U_j$, 
$x'_{j,0}$ and $x'_{j,i+d}$ form local basis of $\calO_{\bP^2}$ and $\calO_{\bP^2}(-i-d)$ on $U_j$, respectively. 
Hence $\pb'_{i,d}$ is isomorphic to $\pb_{i+d}$, and $\pbp'_{i,d}$ coincides with $\pbp_{i+d}$. 
Moreover, $x'_{j,0}=x_{j,0}$ and $x'_{j,i+d}=f_j\, x_{j,i}$ imply that $\theta_{i,d}^{\ast}(x'_{j,0})=x_{j,0}$ and $\theta_{i,d}^{\ast}(x'_{j,i+d})=f_j\,x_{j,i}$. 
\end{proof}

Let $X\subset\bP^3$ be a normal quartic surface with a node $P$ not containing any lines through $P$, 
and let $p:\bP^3\dashrightarrow \bP^2$ be the projection from $P$. 
Let $\phi:\widetilde{\bP}^3\to\bP^3$ be the blowing-up of $\bP^3$ at $P$, and 
let $\widetilde{X}$ be the strict transform of $X$ by $\phi$. 
The projection $p$ induces a morphism $\tilde{p}:\widetilde{\bP}^3\to\bP^2$. 
Note that $\widetilde{\bP}^3$ is isomorphic to $\pb_1$, and $\tilde{p}$ coincides with $\pbp_1:\pb_1\to\bP^2$. 
Let $\widetilde{X}\subset\pb_1$ be the strict transformation of $X$ by the blowing-up at $P$, and 
let $\pi_X:\widetilde{X}\to\bP^2$ denote the restriction of $\pbp_1$ to $\widetilde{X}$. 
Since $X$ contains no lines through $P$, $\pi_X$ is a double cover over $\bP^2$. 
Let $\Gamma_X\subset\bP^2$ denote the branch locus of $\pi_X$, 
and let $\Delta_X\subset\bP^2$ be the image of exceptional divisor of $\widetilde{X}\to X$ by $\pi_X$. 
\[\begin{diagram}
\node{X} \arrow{se,b,..}{p|_{X}} \node{\widetilde{X}} \arrow{w,t}{\phi|_{\widetilde{X}}} \arrow{e,t}{\mbox{\tiny emb.}} \arrow{s,l}{\pi_X} \node{\widetilde{\bP}^3} \arrow{sw,l}{\tilde{p}} \arrow{e,t}{\mbox{\tiny isom.}} \node{\pb_1} \arrow{wsw,b}{\pbp_1} \\
\node[2]{\bP^2}
\end{diagram}\]

\begin{Th}\label{thm. corr}
Let $P$ be a point of $\bP^3$. 
Let $X\subset\bP^3$ be a normal quartic surface  with a node $P$ not containing any lines through $P$.
Then the curve $\Delta_X$ is an even contact conic of $\Gamma_X$. 
Moreover, the following hold: 
\begin{enumerate}[\rm (i)]

\item\label{thm. corr 1} 
The following map is surjective: 
\[ \begin{array}{ccc}
\left\{ \substack{ \mbox{normal quartic surfaces in $\bP^3$ with a} \\[0.2em] 
\mbox{node $P$ which contain no lines through $P$} } \right\}  
& \to &
\left\{ \substack{ \mbox{pairs of sextic curves and} \\[0.2em] \mbox{their even contact conics}}  \right\} \\[1em]
X & \mapsto & (\Gamma_X,\Delta_X)
\end{array}\]

\item\label{thm. corr 2} 
Let $(x:y:z:w)$ be a system of homogeneous coordinates of $\bP^3$ such that 
$P=(0:0:0:1)$. 
Assume that the  defining equation of $X$ is given by $g_2\,w^2+2\,g_3\,w+g_4=0$.  
Then the defining equations of $\Gamma_X$ and $\Delta_X$ are  
\[ \Gamma_X: g_3^2-g_2\,g_4=0 \mbox{\ \  and \ } \Delta_X: g_2=0, \]  where $g_i\in\bC[x,y,z]$ is a homogeneous polynomial of degree $i$ for  $i=2,3,4$.

\item\label{thm. corr 3} 
If we further assume that $\Delta_X$ is a simple contact conic of $\Gamma_X$, there exist  natural surjective maps 
\[ \begin{array}{ccc}
\alpha_1:
\left\{ \substack{\mbox{hyperplanes in $\bP^3$} \\[0.2em] \mbox{not containing $P$} } \right\}  
& \to &
\left\{ \substack{ \mbox{cubic curves on $\bP^2$ passing through} \\[0.2em] \mbox{all tangent points of $\Delta_X$ and $\Gamma_X$,} \\[0.2em] \mbox{not containing $\Delta_X$} }  \right\} 
\end{array}\]
\[ \begin{array}{ccc}
\alpha_2:\left\{ \substack{\mbox{quadric surfaces in  $\bP^3$} \\[0.2em] \mbox{smooth at $P$.} } \right\}  
& \to &
\left\{ \substack{ \mbox{quartic curves on $\bP^2$ passing through} \\[0.2em] \mbox{all tangent points of $\Delta_X$ and $\Gamma_X$,} \\[0.2em] \mbox{not containing $\Delta_X$}}  \right\} 
\end{array}\]
satisfying the following conditions for each singular point $Q\in X\setminus\{P\}$; 
\begin{enumerate}[\rm (I)]
\item 
for a hyperplane $H_1\subset\bP^3$ with $P\not\in H_1$, $Q\in H_1$ if and only if $\pi_X(Q)\in\alpha_1(H_1)$; and 
\item
for a quadric surface $H_2\subset\bP^3$ smooth at $P\in H_2$, $Q\in H_2$ if and only if $\pi_X(Q)\in\alpha_2(H_2)$
\end{enumerate}
\end{enumerate}
\end{Th}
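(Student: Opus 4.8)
The plan is to exploit the explicit birational description of $\theta_{1,6}:\pb_1\dashrightarrow\pb_7$ from Proposition~\ref{prop. birational} together with Lemma~\ref{lem. section}, and to read off everything in the local coordinates $(x_{j,0}:x_{j,i})$ introduced before that lemma. First I would set up the global picture: with $P=(0:0:0:1)$ and $X:\,g_2w^2+2g_3w+g_4=0$, the blow-up $\widetilde{\bP}^3\cong\pb_1$ has $\tilde p=\pbp_1$ the projection from $P$, and in the chart $U_j$ the variable $w$ becomes $x_{j,1}/x_{j,0}$ (up to units), so $\widetilde X$ is cut out by $g_2x_{j,1}^2+2g_3x_{j,0}x_{j,1}+g_4x_{j,0}^2=0$. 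Completing the square over the quadratic extension $\pi_X$ gives the branch locus $\Gamma_X:\,g_3^2-g_2g_4=0$ and $\Delta_X=\pi_X(E)$, where $E\subset\widetilde X$ is the exceptional divisor; since $E$ lies over $x_{j,0}=0$ we get $\Delta_X:\,g_2=0$, proving (\ref{thm. corr 2}). That $\Delta_X$ is an even contact conic of $\Gamma_X$ is then immediate: along $g_2=0$ we have $g_3^2-g_2g_4\equiv g_3^2$, so $\Gamma_X$ meets the conic $\Delta_X$ only where $g_2=g_3=0$ and with even multiplicity there (and these are smooth points of $\Gamma_X$ because $X$ is normal with only the node at $P$); a short Jacobian-criterion check handles smoothness.

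For surjectivity in (\ref{thm. corr 1}), given any sextic $\Gamma:\,h_6=0$ with an even contact conic $\Delta:\,q_2=0$, the contact condition means $h_6$ restricted to $\{q_2=0\}$ is a perfect square, so $h_6\equiv g_3^2\pmod{q_2}$ for some cubic $g_3$, hence $h_6=g_3^2-q_2g_4$ for some quartic $g_4$; then $X:\,q_2w^2+2g_3w+g_4=0$ is a quartic surface with $(\Gamma_X,\Delta_X)=(\Gamma,\Delta)$. One must check $X$ is normal, has exactly a node at $P$ (generically — one can perturb $g_4$ within its coset mod $q_2$ without changing $(\Gamma,\Delta)$ to arrange this, or argue that the worst singularity at $P$ forced by the equation is an $A_1$ since $q_2\ne0$), and contains no line through $P$ (a line through $P$ would project to a point, forcing $\Gamma_X$ to be nonreduced or the fiber to lie in $X$, contradicting that $\Gamma$ is a reduced sextic); this case-checking is the first place where care is needed.

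For (\ref{thm. corr 3}), the maps $\alpha_1,\alpha_2$ come from transporting divisors along $\theta_{1,6}$ and restricting to $\widetilde X$. A hyperplane $H_1:\,ax+by+cz+dw=0$ with $P\notin H_1$ (i.e. $d\ne0$) can be normalized to $w=\ell_1(x,y,z)$; substituting into the equation of $\widetilde X$ in the $U_j$ chart, the condition $w=\ell_1$ cuts $\widetilde X$ in a curve whose image under $\pi_X$ is the cubic $\alpha_1(H_1):\,g_3+g_2\ell_1=0$ (equivalently, the residual cubic in $g_3^2-g_2g_4$ after accounting for the section $w=\ell_1$). Because $\alpha_1(H_1)\equiv g_3\pmod{g_2}$, it passes through all tangent points $g_2=g_3=0$ and does not contain $\Delta_X$; conversely any cubic through those points is $\equiv\lambda g_3\pmod{g_2}$ so arises this way, giving surjectivity. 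Similarly a quadric $H_2$ smooth at $P$ has the form $e\,w^2+2m_1(x,y,z)w+m_2(x,y,z)=0$ with $e\ne0$, and eliminating $w$ between this and the equation of $X$ (a resultant in $w$ of two quadratics) produces a quartic $\alpha_2(H_2)$ on $\bP^2$; one computes it is $\equiv$ (a square times unit)$\cdot g_3$-type residue mod $g_2$, hence passes through the tangent points and not $\Delta_X$, and the map is onto by the same dimension/coset argument. Finally, for a singular point $Q\in X\setminus\{P\}$, its image $\bar Q=\pi_X(Q)$ is a node of $\Gamma_X$, and $Q$ being a point of $X$ on $H_i$ translates — via the explicit substitution eliminating $w$ — into $\bar Q$ lying on $\alpha_i(H_i)$, and conversely, because at a node the fiber of $\pi_X$ is a single point of $\widetilde X$ and the incidence $w(Q)=\ell_1(\bar Q)$ (resp. the quadric relation) is equivalent to $\bar Q\in\alpha_i(H_i)$.

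The main obstacle I expect is the bookkeeping in part (\ref{thm. corr 3}) for $\alpha_2$: eliminating $w$ between two quadratics in $w$ (the surface $X$ and the quadric $H_2$) gives a quartic via a $2\times2$ resultant, and one has to verify cleanly that this quartic genuinely passes through \emph{all} the tangent points with the right behaviour, that the resulting map is well-defined on projective equivalence classes, and — the subtlest point — that the ``if and only if'' at each node $Q$ holds in both directions, including when $Q$ is a node of $\Gamma_X$ whose two local branches could a priori interact differently with $\alpha_i(H_i)$. Controlling this requires working in the local coordinates around $Q$ in $\widetilde X$ and using that $\pi_X$ is unramified there (since $Q\notin\Delta_X$ when $\Delta_X$ is a simple contact conic of $\Gamma_X$), so that the two points of $\pi_X^{-1}(\bar Q)$ correspond to the two sheets $w=w_\pm(\bar Q)$ and $Q$ is one specific sheet; membership of $Q$ in $H_i$ is then a single linear (resp. quadratic) equation in $w_\pm$, which unwinds to the stated incidence on $\bP^2$.
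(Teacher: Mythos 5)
The central gap is in your treatment of part (iii), and specifically of $\alpha_2$. The theorem (see condition (II): ``smooth at $P\in H_2$'') concerns quadrics \emph{through} $P$ that are smooth there, i.e.\ of the form $a_1\,w+a_2=0$ with $a_1\neq0$; you instead take $e\,w^2+2m_1w+m_2=0$ with $e\neq0$, i.e.\ quadrics \emph{missing} $P$, and propose to eliminate $w$ between two quadratics in $w$. That resultant has degree $8$ in $(x,y,z)$, not $4$ (and geometrically, projecting the degree-$8$ curve $X\cap H_2$ from a point not on $H_2$ does not produce quartics through the tangent points), so your $\alpha_2$ does not even land in the stated target set, and neither surjectivity nor condition (II) can follow from it. The same confusion infects $\alpha_1$: the image under $\pi_X$ of $\widetilde X\cap\{w+\ell_1=0\}$ is the quartic $g_2\ell_1^2-2g_3\ell_1+g_4=0$, not the cubic; and your justification of the ``iff'' at a singular point $Q$ via ``two sheets $w_\pm$, $\pi_X$ unramified at $\bar Q$'' is false, since $\bar Q=\pi_X(Q)$ is a node of the branch sextic $\Gamma_X$, so the fiber over $\bar Q$ is a single point. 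The idea you are missing, and which the paper uses, is that every singular point of $X\setminus\{P\}$ satisfies the $w$-derivative relation $g_2w+g_3=0$, i.e.\ lies on the distinguished section $w=-g_3/g_2$ (the section $\widetilde S:t+g_3=0$ of $\pb_3$). Intersecting this section with $H_1:w+\ell_1=0$ gives the cubic $g_2\ell_1-g_3=0$, and with $H_2:a_1w+a_2=0$ the quartic $a_1g_3-a_2g_2=0$; the ``if and only if'' at each $Q$ is then immediate, and surjectivity follows from your coset argument (here one needs the simple-contact hypothesis, so that the six tangent points cut out the divisor of $g_3|_{\Delta_X}$ and hence any such cubic or quartic restricts on $\Delta_X$ to $\lambda\,g_3$ resp.\ $a_1g_3$).

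Parts (i)--(ii) of your proposal are essentially workable and in fact give a more algebraic route to surjectivity than the paper (which embeds the double cover in $\pb_3$, splits $\pi_\Gamma^{\ast}\Delta=\Delta_1+\Delta_2$, and performs an elementary transformation along $\Delta_1$ to land in $\pb_1$ and then $\bP^3$), but two of your stated justifications are wrong. Perturbing $g_4$ within its coset mod $q_2$ changes $h_6=g_3^2-q_2g_4$ by $-q_2^2\cdot(\text{quadric})$ and hence changes $\Gamma$, so that escape hatch is unavailable; it is also unnecessary, since $q_2$ has rank $3$ ($\Delta$ smooth), so the affine equation at $P$ has nondegenerate quadratic part and $P$ is automatically an $A_1$-point. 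For ``no lines through $P$'': a line over $Q\in\bP^2$ lies in $X$ iff $q_2(Q)=g_3(Q)=g_4(Q)=0$, and then $\nabla h_6(Q)=2g_3\nabla g_3-g_4\nabla q_2-q_2\nabla g_4=0$, so $\Gamma$ would be singular at the point $Q\in\Gamma\cap\Delta$, contradicting the definition of a contact conic -- not any non-reducedness of $\Gamma_X$. The same computation, run in the other direction, is what gives smoothness of $\Gamma_X$ at the contact points in the forward implication: it comes from the hypothesis that $X$ contains no lines through $P$, not from normality or from $P$ being the only node. (Also, the relevant elementary transformation in the paper is $\theta_{1,2}:\pb_1\dashrightarrow\pb_3$ along the exceptional conic, not a $\theta_{1,6}$ to $\pb_7$, though your coordinate computations do not really use this.)
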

\begin{proof}
(\ref{thm. corr 1}) 
Let $X\subset\bP^3$ be a normal quartic surface with a node $P$ not containing any lines through $P$. 
Let $\phi:\widetilde{\bP}^3\to \bP^3$ be the blowing-up at $P$, 
and let $\widetilde{X}$ be the strict transform of $X$ under $\phi$. 
Note that there is an isomorphism $\widetilde{\bP}^3\cong\pb_1$ such that the morphism $\widetilde{\bP}^3\to\bP^2$ induced by the projection from $P$ coincides with $\pbp_1:\widetilde{\bP}^3\cong\pb_1\to\bP^2$,  and the exceptional divisor of $\phi$ coincides with the section $S_{1,0}\subset\pb_1$. 
Since $P$ is a node of $X$, $C_2:=\widetilde{X}\cap S_{1,0}$ is a smooth conic on $S_{1,0}$,  and so is $\Delta_X\subset\bP^2$. 
Since $X$ contains no lines through $P$, 
by the blowing-up along $C_2$ and Proposition~\ref{prop. birational}, 
we have the birational map $\theta_{1,2}:\pb_1\dashrightarrow \pb_3$, and $\theta_{1,2}$ induces an embedding $\iota:\widetilde{X}\to \pb_3\setminus S_{3,0}$ satisfying the following commutative diagram: 
\[\begin{diagram}
\node{\widetilde{X}} 
\arrow{e,t}{\iota} 
\arrow{se,b}{\pi_X}
\node{\pb_3\setminus S_{3,0}} \arrow{s,r}{\pbp_3} \\
\node[2]{\bP^2}
\end{diagram}\]
This implies that $\Gamma_X$ is a sextic curve, and $\Gamma_X$ is smooth at $\Gamma_X\cap \Delta_X$ since $\widetilde{X}$ is smooth over $\Delta_X$. 
Moreover, since $\pi_X^{\ast}\Delta_X$ is reducible, $\Delta_X$ must be an even contact conic of $\Gamma_X$. 

Conversely, let $\Gamma\subset\bP^2$ be a sextic curve with an even contact conic $\Delta$. 
Let $\pi_{\Gamma}:\widetilde{X}\to\bP^2$ be the double cover branched along $\Gamma$. 
There is an embedding $\iota:\widetilde{X}\to \bP_3\setminus S_{3,0}$ satisfying $\varphi_3\circ\iota=\pi_{\Gamma}$. 
Since $\Delta$ is rational, $\pi_{\Gamma}^{\ast}\Delta$ splits into two rational curves $\Delta_1$ and $\Delta_2$. 
By blowing-up $\pb_3$ along $\Delta_1$ and by Proposition~\ref{prop. birational}, we obtain 
an embedding $\iota':\widetilde{X}\to\pb_1$ such that $\iota'(\widetilde{X})\cap S_{1,0}=\pbp_1^{-1}(\Delta)\cap S_{1,0}$. 
Since $\widetilde{X}$ is smooth over $\Delta$, $X:=\phi\circ\iota'(\widetilde{X})\subset\bP^3$ is a normal quartic surface with a node $P$, which contains no lines through $P$, such that $(\Gamma_X,\Delta_X)=(\Gamma,\Delta)$. 

Assertion (\ref{thm. corr 2}) is clear. 
Hence we next prove assertion (\ref{thm. corr 3}). 
We may assume that $P=(0:0:0:1)$ and $X$ is defined by $g_2\, w^2+2\, g_3+g_4=0$ as in the assumption of (\ref{thm. corr 2}). 
By the argument of the proof of (\ref{thm. corr 1}), we have the birational map $\tilde{\theta}=\theta_{1,2}\circ\phi^{-1}:\bP^3\dashrightarrow\pb_3$ over $\bP^2$, 
which is locally given on $\{(x:y:z:w)\in\bP^3\mid z=1\}$ by 
\[ \left({x}\,,{y},\,{w}\right) \mapsto \left(x,\,y,\,t\right)=\left({x},\,{y},\,{w}\,g_2({x},{y},1)\right), \]
where we regard $(x:y:z)$ as a system of homogeneous coordinates of $\bP^2$, and 
$t$ is a local coordinate of a fiber of $\pb_3$ such that $t=g_2=0$ gives the center of the blowing-up $\sigma'_{3,2}:V_{3,2}\to\pb_3$ in the diagram (\ref{diagram}). 
Then $\tilde{\theta}$ gives a birational map from $X\subset\bP^3$ to the subvariety $\widetilde{X}\subset\pb_3$ locally given by 
\[ (t+g_3)^2-g_3^2+g_2\,g_4=0, \]
which is naturally the double cover of $\bP^2$ branched along $\Gamma_X$. 
Note that $t+g_3=0$ gives a section $\widetilde{S}$ of $\pb_3$ which contains all singularities of $\widetilde{X}$. 

Next we construct the map $\alpha_1$. 
For a hyperplane $H\subset\bP^3$ given by $w+\ell=0$ for some $\ell\in\bC[x,y,z]$, 
$\tilde{\theta}$ gives a birational map from $H$ to a section $\widetilde{L}$ of $\pb_3$ locally given by 
\[ t+g_2\,\ell=0. \]
We define the map $\alpha_1$ by $\alpha_1(H)=\pbp_3(\widetilde{S}\cap\widetilde{L})$. 
Since $\alpha_1(H)$ is given by $g_2\,\ell-g_3=0$, 
$\alpha_1(H)$ passes through all tangent points of $\Delta_X$ and $\Gamma_X$, and $\alpha_1(H)$ does not contain $\Delta_X$. 
Since $\widetilde{S}$ contains all singularities of $\widetilde{X}$, 
$H$ contain a singularity $Q\in X\setminus\{P\}$ if and only if $p(Q)\in\alpha_1(H)$. 

To construct the map $\alpha_2$, let $H$ be a quadratic surface of $\bP^3$ given by 
\[ a_1\,w+a_2=0, \]
where $a_i\in\bC[x,y,z]$ is a homogeneous polynomial of degree $i$ for each $i=1,2$ with $a_1\ne 0$. 
The birational map $\tilde{\theta}$ gives a birational map from $H$ to a closed subset $\widetilde{H}$ of $\pb_3$ locally given by 
\[ a_1\,t+a_2\,g_2=0. \]
We put $\alpha_2(H)$ as $\pbp_3(\widetilde{S}\cap\widetilde{H})$. 
Since $\alpha_2(H)$ is defined by $a_1\,g_3-a_2\,g_2=0$, $\alpha_2(H)$ passes through all tangent points of $\Delta_X$ and $\Gamma_X$, and $\alpha_2(H)$ does not contain $\Delta_X$. 
Moreover, $H$ contains a singular point $Q\in X\setminus\{P\}$ if and only if $p(Q)\in\alpha_2(H)$. 

We finally prove that $\alpha_1$ and $\alpha_2$ are surjective. 
Let $C_3$ and $C_4$ be curves on $\bP^2$ of degree $3$ and $4$, respectively,  
through all tangent points of $\Delta_X$ and $\Gamma_X$, 
which do not contain $\Delta_X$. 
From the exact sequence 
\[ 0\to H^0(\bP^2,\calO_{\bP^2}(i-2))\overset{\times g_2}{\to} H^0(\bP^2,\calO_{\bP^2}(i))\to H^0(\Delta_X,\calO_{\Delta_X}(i))\to 0 \]
for $i=3,4$, we may assume that $C_3$ and $C_4$ are defined by 
\[ \ell_0\,g_3-\ell_1\,g_2=0 \ \mbox{ and } \ a_1\,g_3-a_2\,g_2=0, \]
respectively, where $\ell_0\in\bC$, $\ell_1,a_1\in H^0(\bP^2,\calO_{\bP^2}(1))$ and $a_2\in H^0(\bP^2,\calO_{\bP^2}(2))$. 
Since $C_3$ and $C_4$ do not contain $\Delta_X$, $\ell_0\ne0$ and $a_1\ne0 $. 
Hence $C_3=\alpha(H_1)$ and $C_4=\beta(H_2)$, 
where $H_1$ and $H_2$ are given by 
$\ell_0\,w+\ell_1=0$ and $a_1\,w+a_2=0$, respectively. 
\end{proof}

\section{Nodal quartic surfaces in $\bP^3$}\label{sec. quartic surfaces}

In this section, we recall a geometric difference of syzygetic and asyzygetic quartic surfaces in detail. 
In order to do this, we investigate the dimension of the linear systems on $\bP^3$ consisting of quadric surfaces through given points. 

\subsection{Linear systems on $\bP^1\times\bP^1$ and $\bP^3$}

In this subsection, we give a bound on the dimension of linear systems on $\bP^3$ with $8$ assigned base points in ``general" position. 
We first consider linear systems on $\bP^1\times\bP^1$. 
Let $p_1$ and $p_2$ denote the two projections from $\bP^1\times\bP^1$ to $\bP^1$. 

\begin{Lem}\label{lem. 1,1}
Let $P_1,\dots,P_r$ be $r$ distinct points of $\bP^1\times\bP^1$, and let $C$ be a $(1,1)$-curve on $\bP^1\times\bP^1$. 
Put $\fd:=|C-P_1-\dots-P_r|$. 
\begin{enumerate}[\rm (i)]
\item\label{lem. 1,1 1}
In the case where $r=2$, $\fd$ has no unassigned base points if 
$p_j(P_1)\ne p_j(P_2)$ for  $j=1,2$.
\item\label{lem. 1,1 2} 
In the case where $r=3$, 
$\dim\fd=0$ if
$\sharp(p_j^{-1}(Q)\cap\{P_1,P_2,P_3\})\leq2$ for any $Q\in\bP^1$ and  $j=1,2$. 
\end{enumerate}
\end{Lem}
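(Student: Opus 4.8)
The plan is to exploit that the complete linear system $|C|=|\calO(1,1)|$ on $\bP^1\times\bP^1$ has projective dimension $3$ and is very ample, being the hyperplane class of the Segre embedding $\bP^1\times\bP^1\hookrightarrow\bP^3$. I would record three elementary facts to use throughout: every $(1,1)$-curve is reduced; a $(1,1)$-curve is singular precisely when it is a union $p_1^{-1}(Q_1)\cup p_2^{-1}(Q_2)$ of one fibre of each ruling, in which case its only node is $(Q_1,Q_2)$; and two $(1,1)$-curves with no common component meet in a $0$-cycle of length $(1,1)^2=2$.

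For part (i), under the hypothesis $p_j(P_1)\ne p_j(P_2)$ the two points impose independent conditions on $|C|$ — the union of the two ruling fibres through $P_1$ is a $(1,1)$-curve passing through $P_1$ but not $P_2$ — so $\fd$ is a pencil; moreover $\fd$ has no fixed component, since a fixed component would be a ruling fibre lying on every member, hence a ruling fibre through both $P_1$ and $P_2$. Therefore the base scheme of $\fd$ is the intersection of two general members, a $0$-cycle of length $(1,1)^2=2$; as it contains the two distinct points $P_1,P_2$, it equals $\{P_1,P_2\}$, and the two general members meet transversally at each of them. Finally, no member of $\fd$ is singular at $P_1$ or $P_2$: such a member would be a union $p_1^{-1}(Q_1)\cup p_2^{-1}(Q_2)$ with node at, say, $P_1=(Q_1,Q_2)$, and then $P_2$ lying on one of these fibres would force $p_1(P_1)=p_1(P_2)$ or $p_2(P_1)=p_2(P_2)$, which is excluded. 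Hence every member of $\fd$ has multiplicity one at $P_1$ and at $P_2$, so on the blow-up $\sigma$ at $P_1,P_2$ the divisor $\sigma^\ast D-E_1-E_2$ is the strict transform of each member $D$; the strict transforms of two general members are disjoint, so $|\sigma^\ast C-E_1-E_2|$ is base-point free, i.e.\ $\fd$ has no unassigned base points.

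For part (ii), since $\dim|C|=3$ the system $\fd$ is nonempty, so it suffices to rule out $\dim\fd\ge1$. I would first check the purely combinatorial fact that, under the hypothesis that no ruling fibre contains all three of $P_1,P_2,P_3$ and since these points are distinct, some pair among them — say $P_1,P_2$ — lies on no common ruling fibre; indeed, if every pair lay on a common ruling fibre, a short case analysis on which ruling is involved in each of the three pairs would force either a ruling fibre through all three points or a coincidence $P_i=P_j$. Then the proof of part (i) shows that $|C-P_1-P_2|$ is a pencil whose base scheme is exactly $\{P_1,P_2\}$. Since $P_3\ne P_1,P_2$, the point $P_3$ is not a base point of this pencil, so the members through $P_3$ form a proper linear subsystem, of dimension at most $0$; but this subsystem is exactly $\fd$. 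Hence $\dim\fd\le0$, so $\dim\fd=0$.

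The step I expect to be the main obstacle is the passage, in part (i), from knowing that the base scheme of $\fd$ is the reduced pair $\{P_1,P_2\}$ to concluding that $\fd$ has no unassigned base point after blowing up: this is precisely where the extra input that no member of $\fd$ is singular at $P_1$ or $P_2$ enters, ensuring that $\sigma^\ast D-E_1-E_2$ is genuinely the strict transform of every member $D$ and that the blown-up system is therefore base-point free. Everything else — the independence of the two point conditions, the absence of a fixed component, and the combinatorial reduction in part (ii) — is routine. One could alternatively run the whole argument through the identification $H^0(\bP^1\times\bP^1,\calO(1,1))\cong\bC^2\otimes\bC^2$, under which a point imposes orthogonality to a rank-one tensor, but the geometric version above seems shortest.
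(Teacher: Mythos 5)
Your proposal is correct and follows essentially the same route as the paper: you show the pencil $|C-P_1-P_2|$ has base scheme exactly $\{P_1,P_2\}$ with two members meeting transversally there (hence no unassigned base points after blowing up), and for (ii) you reduce to a pair with $p_j(P_1)\ne p_j(P_2)$ for $j=1,2$ and cut the pencil by the condition of passing through $P_3$. The only difference is cosmetic: the paper gets the transversality at once from the two explicit split members $p_1^{\ast}(p_1(P_1))+p_2^{\ast}(p_2(P_2))$ and $p_1^{\ast}(p_1(P_2))+p_2^{\ast}(p_2(P_1))$, whereas you argue with general members of the pencil and separately exclude members singular at $P_1$ or $P_2$.
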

\begin{proof}
(\ref{lem. 1,1 1}) 
The divisors $p_1^{\ast}(p_1(P_1))+p_2^{\ast}(p_2(P_2))$ and $p_1^{\ast}(p_1(P_2))+p_2^{\ast}(p_2(P_1))$ are elements of $\fd$, and they meet transversally at $P_1$ and $P_2$. 
This implies that $\fd$ has no unassigned base points. 

(\ref{lem. 1,1 2}) 
By the assumption, we may assume that  $p_j(P_1)\ne p_j(P_2)$ for any $j=1,2$. 
Since $\dim |C|=3$, (\ref{lem. 1,1 1}) implies that $\dim\fd=0$. 
\end{proof}

\begin{Prop}\label{prop. general}
Let $P_1,\dots,P_r$ be $r$ points of $\bP^1\times\bP^1$, 
and let $D$ be a $(2,2)$-curve. 
Assume that 
\begin{enumerate}[{\rm (a)}]
\item\label{prop. general 1}
for any $Q\in\bP^1$ and any $j=1,2$, $\sharp(p_j^{-1}(Q)\cap\{P_1,\dots,P_r\})\leq2$, and 
\item\label{prop. general 2} 
no $(1,1)$-curves contain five points of $P_1,\dots,P_r$. 
\end{enumerate}
Then, if $r=7$, $\dim |D-P_1-\dots-P_7|= 1$. 
In particular, if $r=8$, then $0\leq\dim |D-P_1-\dots-P_8|\leq 1$. 
\end{Prop}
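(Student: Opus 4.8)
The plan is to reduce the $(2,2)$-curve problem on $\bP^1\times\bP^1$ to the easier $(1,1)$-curve situation handled in Lemma~\ref{lem. 1,1}, by first disposing of the case $r=7$ and then deriving the $r=8$ statement as an immediate corollary. The linear system $|D|$ of $(2,2)$-curves has dimension $8$, so a priori $\dim|D-P_1-\dots-P_7|\geq 8-7=1$; the whole content is the reverse inequality $\dim|D-P_1-\dots-P_7|\leq 1$, i.e. that the $7$ assigned points impose independent conditions on $(2,2)$-curves. For $r=8$, once $r=7$ is known, adding an eighth point drops the dimension by at most $1$ and keeps it $\geq 0$ (the system is nonempty, again by dimension count, since $8$ points can never over-determine a $9$-dimensional space of sections beyond cutting it to a point), giving $0\leq\dim|D-P_1-\dots-P_8|\leq 1$.

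For the core case $r=7$, I would argue by contradiction: suppose $\dim|D-P_1-\dots-P_7|\geq 2$. The idea is to find a point $Q\in\bP^1\times\bP^1$ (on one of the two rulings through some $P_i$, or chosen generically) so that $|D-P_1-\dots-P_7-Q|$ still has positive dimension, and then show its members must be reducible, forced to contain a ruling line, and peeling that line off produces a $(1,1)$-curve through five of the $P_i$ — contradicting hypothesis~(\ref{prop. general 2}). Concretely: pick the ruling $R=p_1^{-1}(p_1(P_1))$; by hypothesis~(\ref{prop. general 1}) at most two of the $P_i$ lie on $R$, say $P_1$ and possibly $P_2$. A $(2,2)$-curve $D'$ in our system meets $R$ in $2$ points; if I impose one more general base point $Q\in R$ (so $\dim$ still $\geq 1$), then $D'$ meets $R$ in at least the three points $P_1,Q$, and one of $P_2$ or another — exceeding $D'.R=2$ — so $R\subset D'$, hence $D'=R+D''$ with $D''$ a $(1,2)$-curve. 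Iterating the same argument on a $p_2$-ruling through the surviving points forces $D''$ to split off another line, leaving a $(1,1)$-curve that is then pinned down to pass through the five $P_i$ not consumed by the two removed lines, contradicting~(\ref{prop. general 2}). The bookkeeping — which rulings to choose, how many points each removed line absorbs, ensuring at each stage that at least five $P_i$ remain for the final $(1,1)$-curve — is the delicate part and must be organized using both hypotheses carefully.

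An alternative, possibly cleaner route: use the standard fact that on $\bP^1\times\bP^1$ a $(2,2)$-curve is anticanonical, so pairs $(\bP^1\times\bP^1, P_1+\dots+P_r)$ relate to blow-ups, and the condition ``$r$ points impose independent conditions on $|-K|$'' is governed by the absence of certain $(-1)$- and $(-2)$-configurations; but since the paper develops things concretely, I expect the author to prefer the direct incidence/degeneration argument above. Either way, hypothesis~(\ref{prop. general 2}) is exactly the obstruction one needs to rule out, and hypothesis~(\ref{prop. general 1}) is what prevents a ruling line from being ``forced'' for the wrong reason.

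\medskip

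The main obstacle I anticipate is the combinatorial case analysis in the $r=7$ argument: after splitting off a ruling line one must track precisely how the seven points redistribute, and one has to handle the subcase where two of the $P_i$ lie on the \emph{same} ruling (so removing that ruling eats two points at once) versus the generic subcase. Getting a uniform contradiction with the ``no $(1,1)$-curve through five points'' hypothesis in all subcases — in particular making sure the residual $(1,1)$-curve is genuinely forced through five of the original points and not fewer — is where the real work lies. The $r=8$ statement, by contrast, is a one-line consequence of monotonicity of $\dim$ under adding base points together with a dimension count for non-emptiness.
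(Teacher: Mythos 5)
Your reduction of the $r=8$ case to the $r=7$ case is fine, but the core $r=7$ argument has a genuine gap, and it lies exactly where you defer the work. First, the forcing step is wrong as stated: if $R=p_1^{-1}(p_1(P_1))$ contains only $P_1$ among the assigned points (which hypothesis (a) certainly allows), then after imposing one extra point $Q\in R$ a member $D'$ of the system is only required to meet $R$ at the two points $P_1,Q$; since $D'.R=2$, nothing forces $R\subset D'$, so no ruling splits off and the argument does not start. Second, even in the subcase where a ruling through two of the $P_i$ does split off, the bookkeeping you postpone does not close: each of the two rulings you remove may absorb two of the seven points, so the residual $(1,1)$-curve is only forced through as few as three of the original $P_i$, and no contradiction with hypothesis (b) (``no $(1,1)$-curve through five of the points'') is obtained. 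One also cannot simply impose two extra points on the ruling through a single $P_i$: that exhausts the assumed two extra dimensions, and the resulting residual $(1,2)$-curve through the remaining six points is not by itself in conflict with (a) and (b), so an additional argument would still be needed. In short, the contradiction scheme is a plan whose decisive steps are either incorrect or missing.

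For comparison, the paper argues directly rather than by contradiction: it sets $\fd=|D-P_1-\dots-P_6|$ and shows that $P_7$ is not an unassigned base point of $\fd$ by explicitly exhibiting, in each of a small number of cases, a reducible $(2,2)$-curve through $P_1,\dots,P_6$ missing $P_7$ --- either a sum of two $(1,1)$-curves $C_{ijk}$ (each pinned down by three of the points via Lemma~\ref{lem. 1,1}), or a $(1,1)$-curve plus two rulings; hypotheses (a) and (b) are used to guarantee that the relevant $(1,1)$-curves are unique, do not share components with rulings through the points, and avoid $P_7$. If you want to salvage your approach, you would need either to carry out that kind of case analysis yourself (at which point the direct construction is simpler), or to make the splitting-off argument honest by always imposing enough extra base points on a ruling to exceed the intersection number and then controlling the residual system with a statement like Lemma~\ref{lem.1,2}, including the subcase where no ruling contains two of the $P_i$.
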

\begin{proof}
We put $\fd:=|D-P_1-\dots-P_6|$. 
Since $\dim |D|=8$, it is sufficient to prove that $P_7$ is not an unassigned base point of $\fd$. 
We put $F_{j,i}:=p_j^{\ast}(p_j(P_i))$ for $i=1,\dots,7$ and $j=1,2$. 

\textit{Case 1.} 
Suppose that there exists a $(1,1)$-curve $C$ passing through four points of $P_1,\dots,P_6$. 
We may assume that $C_{1234}:=C$ passes through $P_1,\dots, P_4$. 
By the assumption (\ref{prop. general 2}), $P_7\not\in C_{1234}$. 
If $F_{j,5}\ne F_{j,6}$ for any $j=1,2$, then there exists a $(1,1)$-curve $C_{56}$  passing through  $P_5$ and $P_6$ but not $P_7$ by Lemma~\ref{lem. 1,1}. 
Hence $C_{1234}+C_{56}$ is a $(2,2)$-curve not passing through $P_7$, but $P_1,\dots,P_6$. 
If $F_{j,5}=F_{j,6}$ for some $j=1,2$, say $j=1$, then $P_7\not\in F_{1,5}$ by the assumption (\ref{prop. general 1}). 
By choosing $Q\in\bP^1$ such that $p_2(P_7)\ne Q$, 
$C_{1234}+F_{1,5}+p_2^{\ast}(Q)$ is a $(2,2)$-curve not passing through $P_7$, but $P_1,\dots,P_6$. 

\textit{Case 2.} 
Suppose that no $(1,1)$-curves pass through four points of $P_1,\dots,P_6$. 
Let $C_{ijk}$ be the $(1,1)$-curve passing through distinct three points $P_i,P_j,P_k$ of $P_1,\dots,P_6$. 
Note that $C_{ijk}$ is determined uniquely by Lemma~\ref{lem. 1,1} (\ref{lem. 1,1 2}), 
and $C_{i j k}\ne C_{i'j'k'}$ if $\{i,j,k\}\ne\{i',j',k' \}$. 
If $P_7\not\in C_{123}$ and $P_7\not\in C_{456}$, we have nothing to prove. 

Suppose $P_7\in C_{123}$, and $C_{123}$ is reducible. 
By  assumption (\ref{prop. general 1}), we may assume that $C_{123}=F_{1,1}+F_{2,3}$, $F_{1,1}=F_{1,2}$ and $F_{2,3}=F_{2,7}$. 
Also by assumption (\ref{prop. general 1}), $F_{1,1}$ and $F_{2,3}$ are not irreducible components of $C_{13i}$ for any $i=4,5,6$. 
Since the intersection number $C_{123}.C_{13i}=2$, $C_{13i}$ does not pass through $P_7$ for $i=4,5,6$. 
If $P_7\in C_{2ij}$ for any $i,j=4,5,6$ with $i\ne j$, then it follows from $C_{245}.C_{246}=2$ and  assumption (\ref{prop. general 1}) that $C_{245}$ and $C_{246}$ are reducible and have one common component; 
in this case, the common component must contain three points $P_2, P_4, P_7$, 
which is a contradiction to (\ref{prop. general 1}). 
Thus $P_7\not\in C_{2ij}$ for some $i,j=4,5,6$, say $i=4,j=5$, 
and $C_{136}+C_{245}$ is a $(2,2)$-curve not passing through $P_7$, but $P_1,\dots,P_6$. 

Suppose $P_7\in C_{123}$, and $C_{123}$ is irreducible. 
Then $P_7\not\in C_{12i}$ for $i=4,5,6$. 
Suppose $P_7\in C_{3ij}$ for some $i,j=4,5,6$, say $i=4, j=5$, then we may assume by the above argument that $C_{345}$ is irreducible. 
In this case, we have $P_7\not\in C_{346}$. 
Thus $C_{125}+C_{346}$ is a $(2,2)$-curve not passing through $P_7$, but $P_1,\dots,P_6$. 
Therefore we have proved the assertion. 
\end{proof}

\begin{Def}{\rm
For $r$ points $P_1,\dots,P_r\in\bP^1\times\bP^1$ with $r\leq 8$, 
we say that $P_1,\dots,P_r$ are \textit{in general position} if 
$P_1,\dots,P_r$ satisfy the assumption (\ref{prop. general 1}) and (\ref{prop. general 2}) in Proposition~\ref{prop. general}. 
}\end{Def}

We will use the following lemma later. 

\begin{Lem}\label{lem.1,2}
Let $P_1,\dots,P_5\in\bP^1\times\bP^1$ be five points in general position, 
and let $D$ be a $(1,2)$-curve on $\bP^1\times\bP^1$. 
Then $\dim\,|D-P_1-\dots-P_5|=0$. 
\end{Lem}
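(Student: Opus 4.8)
The plan is to compute directly with the complete linear system $|D|$ of $(1,2)$-curves on $\bP^1\times\bP^1$ and to show that imposing passage through the five points $P_1,\dots,P_5$ cuts its dimension down to exactly $0$. First I would recall that $\dim|D| = \dim H^0(\bP^1\times\bP^1,\calO(1,2)) - 1 = 2\cdot 3 - 1 = 5$, so a priori $\dim|D-P_1-\dots-P_5| \geq 5-5 = 0$; the content is the reverse inequality, i.e.\ that the five points impose independent conditions and that no extra base behaviour forces the system to be larger than expected (equivalently, that there is no pencil through all five). So the real task is to rule out $\dim|D-P_1-\dots-P_5|\geq 1$.

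The key step will be a case analysis based on the structure of a hypothetical $(1,2)$-curve through the five points, exploiting the general position hypothesis (no fiber $p_j^{-1}(Q)$ contains three of the $P_i$, and no $(1,1)$-curve contains five of them). A $(1,2)$-curve is either irreducible, or splits as $F_1 + C$ with $F_1$ a $(1,0)$-fiber and $C$ a $(0,2)$ or $(1,1)$... more precisely the decomposition types are $(1,0)+(0,2)$, $(1,0)+(0,1)+(0,1)$, or $(1,1)+(0,1)$. I would first handle the reducible cases: a $(1,0)$-fiber absorbs at most two of the $P_i$ by general position, a $(0,1)$-fiber at most two, and a $(1,1)$-curve at most four, so combining these with the count of five points pins down which points lie on which component and shows there is at most one such reducible curve of each combinatorial shape (the $(1,1)$ piece being uniquely determined by three of its points via Lemma~\ref{lem. 1,1}(ii), the $(0,1)$- and $(1,0)$-pieces determined by one point each). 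For the irreducible case I would argue that a pencil of $(1,2)$-curves through five general points would have, after accounting for the five base points, residual intersection behaviour incompatible with $D^2 = 2\cdot1\cdot2 = 4 < 5$; alternatively, restrict to a general $(1,0)$-fiber, where $\calO(1,2)$ restricts to $\calO_{\bP^1}(2)$, a $2$-dimensional system, and track how the two points on a fiber plus the three off it force the dimension down.

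The cleanest route is probably the following: assume for contradiction $\dim|D-P_1-\dots-P_5|\geq 1$ and pick a general member $D'$ of this pencil through a sixth general point $P_6$. By general position we may assume $p_1(P_1)\neq p_1(P_2)$; consider the $(1,0)$-fibers $F_{1,1}, F_{1,2}$ through $P_1, P_2$. The residual curve $D' - F_{1,1}$ (if $F_{1,1}\subset D'$) is a $(0,2)$-curve through the remaining points, but a $(0,2)$-curve is a sum of two $(1,0)$\emph{-duals}, i.e.\ $p_2$-fibers, hence at most two further points by general position — contradiction since three remain. So no $P_i$-fiber splits off, which combined with irreducibility and the bound $\dim|\calO(1,2)|=5$ forces independence of the conditions. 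I expect the main obstacle to be bookkeeping in the reducible cases — making sure every way of distributing five points among the allowed component types is either excluded by general position or yields a zero-dimensional (rigid) configuration — rather than any deep geometry; the irreducible case should follow quickly from the self-intersection count or from restriction to a fiber.
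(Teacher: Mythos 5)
Your ``cleanest route'' breaks down at its key quantitative step, because ``general position'' here is not genericity but precisely conditions (a) and (b) of Proposition~\ref{prop. general}: no fiber of either ruling contains three of the $P_i$, and no $(1,1)$-curve contains all five. In particular two of the points \emph{are} allowed to lie on a common fiber, so a $(0,2)$-curve, being a union of two fibers of the same ruling, may contain up to \emph{four} of the $P_i$, not ``at most two''. Hence in your contradiction argument (a member $D'$ of a hypothetical pencil containing the $p_1$-fiber through $P_1$), the residual $(0,2)$-curve can perfectly well pass through the three or four remaining points --- for instance when $P_2,P_3$ lie on one $p_2$-fiber and $P_4,P_5$ on another --- and no contradiction arises. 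The same overcount affects the bookkeeping you propose for the reducible cases.

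Independently of this, the concluding inference ``no fiber through $P_1$ or $P_2$ splits off, which \dots forces independence of the conditions'' is not an argument. The correct way to use $D^2=4<5$ is: a pencil inside $|D-P_1-\dots-P_5|$ must have a nonzero fixed part $F$ (two members without common component meet in only four points, yet share the five base points), and one must then do a case analysis on the class of $F$ ($(1,0)$, $(0,1)$, $(0,2)$ or $(1,1)$), checking via (a), (b) and Lemma~\ref{lem. 1,1} that the moving part, which must contain the $P_i$ not absorbed by $F$, cannot move. Your observation that there is ``at most one reducible curve of each combinatorial shape'' does not substitute for this: a pencil typically has irreducible general member and only finitely many reducible members, so uniqueness of reducible members rules out nothing. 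Note that the paper avoids all of this by arguing directly that $P_5$ is not an unassigned base point of $|D-P_1-\dots-P_4|$: after choosing labels with $p_2(P_1)\ne p_2(P_5)$, $p_2(P_2)\ne p_2(P_5)$ and $P_5\notin C_{234}$ (possible by the common-component argument), the explicit member $C_{234}+p_2^{\ast}(p_2(P_1))$, with $C_{234}$ the $(1,1)$-curve through $P_2,P_3,P_4$ from Lemma~\ref{lem. 1,1}, passes through $P_1,\dots,P_4$ but not $P_5$.
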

\begin{proof}
We have $\dim\,|D-P_1-\dots-P_5|\geq0$ since $\dim\,|D|=5$. 
It is sufficient to prove that $P_5$ is not an unassigned base point of $\fd:=|D-P_1-\dots-P_4|$. 
From the condition (\ref{prop. general 1}) in Proposition~\ref{prop. general}, there are $i_1,i_2\in\{1,\dots,4\}$ such that 
$p_2(P_{i_1})\ne p_2(P_5)$ and $p_2(P_{i_2})\ne p_2(P_5)$, say $i_1=1$ and $i_2=2$. 
Let $C_{ijk}$ be the $(1,1)$-curve as in the proof of Proposition~\ref{prop. general}. 
If $P_5\in C_{i34}$ for $i=1,2$, then $C_{134}$ and $C_{234}$ have a common component $F$ which contains $P_5$. 
This implies that $P_5$ is contained in a ruling of $\bP^1\times\bP^1$ passing through two points of $P_1,\dots,P_4$, which is a contradiction. 
Hence $P_5\not\in C_{i34}$ for some $i=1,2$, say $i=2$. 
Then $D_{234}+p_2^{\ast}(p_2(P_1))$ is an element of $\fd$ no passing through $P_5$. 
\end{proof}

Next we consider linear systems on $\bP^3$. 
We say that $r$ points $P_1,\dots,P_r\in\bP^3$ with $r\leq 8$ are \textit{in general position} 
if 
\begin{enumerate}[\rm (A)]
\item\label{def. General 1} 
no three points of $P_1,\dots,P_r$ are collinear, and
\item\label{def. General 2} 
no hyperplanes in $\bP^3$ contain five points of $P_1,\dots,P_r$. 
\end{enumerate}

\begin{Prop}\label{prop. General2}
Let $P_1,\dots,P_8\in\bP^3$ be eight points in general position, and 
let $D$ be a quadratic surface in $\bP^3$, i.e., $\deg D=2$. 
Then $1\leq\dim |D-P_1-\dots-P_8|\leq 2$. 
\end{Prop}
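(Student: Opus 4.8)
The plan is to estimate $\dim|D - P_1 - \dots - P_8|$ by counting conditions and then using the plane-curve results already established (Proposition~\ref{prop. general} and Lemma~\ref{lem. 1,1}) to show that the eight points in general position actually impose independent conditions on quadrics, which will pin the dimension down. Since $\dim|D| = \dim H^0(\bP^3,\calO_{\bP^3}(2)) - 1 = 9$, each point imposes at most one linear condition, so $\dim|D - P_1 - \dots - P_8| \geq 9 - 8 = 1$; this gives the lower bound immediately. For the upper bound, I must show that the eight points do \emph{not} all lie on a single quadric through a fixed pencil — equivalently, that they impose at least $7$ independent conditions, so that $\dim \leq 9 - 7 = 2$.

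The key step is to produce, for each $i$, a quadric surface passing through the other seven points $\{P_1,\dots,P_8\}\setminus\{P_i\}$ but not through $P_i$ (this is what ``no unassigned base point'' means in the sense of the discussion preceding Corollary~\ref{cor. dimension}), or at least to show the seven points $P_1,\dots,P_7$ impose independent conditions so that $\dim|D - P_1 - \dots - P_7| = 2$ and adding $P_8$ drops it to at most $1$. Concretely, I would proceed as follows. First, handle the generic subcase: if no four of the $P_i$ are coplanar, then using the genericity hypotheses (A) (no three collinear) and (B) (no five coplanar) together with reducible quadrics of the form $H \cup H'$ (union of two hyperplanes) one can route through the points — two hyperplanes each through four of the points, chosen to avoid the eighth. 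The combinatorics here mirrors exactly the $\bP^1 \times \bP^1$ argument in Proposition~\ref{prop. general}: a quadric in $\bP^3$ is cut out by a quadratic form just as a $(2,2)$-curve on $\bP^1\times\bP^1$ is, and the degenerate members (rank-$\leq 2$ quadrics, i.e. pairs of planes) play the role of reducible $(2,2)$-curves $C_{ijk} + C_{i'j'k'}$. Second, handle the special subcase where some four points, say $P_1,P_2,P_3,P_4$, are coplanar in a hyperplane $H$: by hypothesis (B) no fifth point lies on $H$, so $P_5,\dots,P_8\notin H$. Then $H$ together with a hyperplane $H'$ through three of $P_5,P_6,P_7,P_8$ (which exists and can be chosen, by (A), to miss the remaining one of those four) gives a reducible quadric $H\cup H'$ through seven of the eight but not the eighth.

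The main obstacle I anticipate is the bookkeeping in the generic subcase: one needs to check that for \emph{every} index $i$ there is a rank-$\leq 2$ quadric hitting the other seven and missing $P_i$, and the available hyperplanes through prescribed quadruples/triples of points may be forced to contain $P_i$ for degenerate configurations of the seven remaining points. This is precisely the kind of case analysis carried out in Proposition~\ref{prop. general} (Cases 1 and 2, distinguishing whether many points lie on a low-degree subvariety), and I expect the $\bP^3$ version to require an analogous split — first asking whether four of $P_1,\dots,P_7$ are coplanar, then whether triples force a common plane — before concluding. Once independence of the conditions imposed by seven points is established, $\dim|D - P_1 - \dots - P_7| = 9 - 7 = 2$, and adding the eighth (general-position) point drops the dimension by at most one and, since it drops by at most one in general, we get $1 \leq \dim|D - P_1 - \dots - P_8| \leq 2$, as claimed.
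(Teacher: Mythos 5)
Your lower bound is fine and agrees with the paper, but the upper bound as proposed has a genuine gap, and your lead idea is actually false under the paper's hypotheses. You propose first to find, for each $i$, a quadric through the other seven points missing $P_i$ (``no unassigned base point''). This is impossible in precisely the case this proposition is designed to allow: when the eight points are the base locus of a net of quadrics (the syzygetic/Cayley-octad situation exploited later in Proposition~\ref{prop. conical}), every quadric through any seven of the points automatically contains the eighth, by Cayley--Bacharach; this is exactly why $\dim|D-P_1-\dots-P_8|$ can equal $2$, and why the paper proves base-point freeness only in Proposition~\ref{prop. dim3} under the extra hypothesis $\dim\fd=2$. Moreover, your concrete construction for it --- a union of two planes ``each through four of the points'' --- does not exist in your own generic subcase: a plane contains four of the points only if those four are coplanar, which the paper's general position (no three collinear, no five coplanar) permits but by no means guarantees.

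Your fallback route is viable but not carried out: it suffices to show that any seven of the points impose independent conditions on quadrics, i.e.\ for each $i\leq 7$ there is a quadric through the other six avoiding $P_i$, which one can try to realize as a union of two planes through two triples of the six. The needed verification is exactly the ``bookkeeping'' you defer: a triple is bad (its plane contains $P_i$) only if it is coplanar with $P_i$; since no five of the points are coplanar and no three are collinear, two distinct bad triples share at most one point, so there are at most four bad triples, killing at most four of the ten partitions of the six points into triples, and a good partition remains. Without this (or some substitute), the claim $\dim|D-P_1-\dots-P_7|=2$ is unproven --- note the classical ``$2n+1$ points in linearly general position'' results do not apply verbatim, because the paper's notion allows four coplanar points. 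For comparison, the paper argues quite differently and avoids all of this: it observes that $\fd$ has an irreducible member, restricts to a smooth member $D_0\cong\bP^1\times\bP^1$ and applies Proposition~\ref{prop. general} to $\fd|_{D_0}$, and handles the remaining case (all members singular, hence cones with a common vertex) by projecting from the vertex and analyzing conics through the four image points.
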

\begin{proof}
We put $\fd:=|D-P_1-\dots-P_8|$. 
Since $\dim |D|=9$, we have $\dim \fd\geq1$. 
The condition (\ref{def. General 1}) and (\ref{def. General 2}) implies that the number of reducible members in $\fd$ is finite. 
Hence there exist irreducible members in $\fd$. 

First we consider the case where there is a smooth quadratic surface $D_0$ in $\fd$. 
The conditions (\ref{def. General 1}) and (\ref{def. General 2}) imply that $P_1,\dots,P_8$ are in general position 
as points on $D_0\cong\bP^1\times\bP^1$. 
Hence $0\leq\dim\,\fd|_{D_0}\leq1$ from Proposition~\ref{prop. general}, and we have $1\leq\dim\,\fd\leq2$.

If all members of $\fd$ are singular, then there exists a point $P'\in\bP^3$ such that $P'$ is a singular point of all members of $\fd$. 
Let $\pi_{P'}:\bP^3\setminus\{P'\}\to\bP^2$ be the projection from $P'$. 
In this case, for each member $D'\in\fd$, there exists a conic $C'\subset\bP^2$ with $\pi_{P'}(P_i)\in C'$ ($i=1,\dots,8$) such that $D'$ is the closure of $\pi_{P'}^{-1}(C')$ in $\bP^3$. 
From the conditions (\ref{def. General 1}), (\ref{def. General 2}) and $\dim \fd\geq1$, 
the image of $\{P_1,\dots,P_8\}$ under $\pi_{P'}$ is a set of $4$ points 
such that no three points of them are collinear. 
This implies that $\dim\fd=1$. 
\end{proof}

\begin{Prop}\label{prop. dim3}
Let $P_1,\dots,P_8\in\bP^3$ be eight points in general position, 
and $D$ a quadratic surface in $\bP^3$. 
Put $\fd:=|D-P_1-\dots-P_8|$. 
If $\dim\,\fd=2$, then $\fd$ has no unassigned base points. 
\end{Prop}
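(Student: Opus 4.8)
The plan is to transfer the whole question to a smooth quadric surface contained in $\fd$ and there exploit the results already established for $\bP^1\times\bP^1$. Since $\dim\fd=2$, the proof of Proposition~\ref{prop. General2} shows that $\fd$ contains a \emph{smooth} quadric $D_0$, that $P_1,\dots,P_8$ are in general position on $D_0\cong\bP^1\times\bP^1$ in the sense of conditions (\ref{prop. general 1}) and (\ref{prop. general 2}), and that restriction to $D_0$ drops the dimension by one; hence $\fd|_{D_0}$ is a pencil of $(2,2)$-curves through $P_1,\dots,P_8$.

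Next I would make the reduction precise on the blow-up. Let $\sigma:\widetilde{\bP}^3\to\bP^3$ be the blow-up at $P_1,\dots,P_8$ with exceptional divisors $E_1,\dots,E_8$, and set $\widetilde{\fd}:=|\sigma^{\ast}D-E_1-\dots-E_8|$, whose base points are by definition the unassigned base points of $\fd$. As $D_0$ is smooth and passes through each $P_i$ with multiplicity one, its strict transform $\widetilde{D_0}$ is a member of $\widetilde{\fd}$, isomorphic to the blow-up of $D_0$ at $P_1,\dots,P_8$; therefore the base locus of $\widetilde{\fd}$ is contained in $\widetilde{D_0}$ and coincides with the base locus of the restricted system $\widetilde{\fd}|_{\widetilde{D_0}}$, which is the strict transform of the pencil $\fd|_{D_0}$. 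So it suffices to prove that the base scheme of the pencil $\fd|_{D_0}$ on $\bP^1\times\bP^1$ is the reduced scheme $\{P_1,\dots,P_8\}$; blowing up those eight points then produces a base-point-free system on $\widetilde{D_0}$, hence $\fd$ has no unassigned base point.

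Finally I would analyze this pencil of $(2,2)$-curves. The key point is that it has no fixed component: if a curve $F$ of type $(a,b)$ were a fixed component, the moving part would be a pencil inside $|(2-a,2-b)|$, which restricts $(a,b)$ to a short list of possibilities, and in each of them the bounds on how many of $P_1,\dots,P_8$ can lie on a ruling, on a $(1,1)$-curve, or on a $(1,2)$-/$(2,1)$-curve---supplied by conditions (\ref{prop. general 1}), (\ref{prop. general 2}) together with Lemmas~\ref{lem. 1,1} and \ref{lem.1,2}---are meant to yield a contradiction. Granting this, two distinct members of the pencil meet in a zero-dimensional scheme whose length equals the self-intersection $8$ of a $(2,2)$-curve; since this scheme contains the eight distinct points $P_1,\dots,P_8$, it is exactly $\{P_1,\dots,P_8\}$ with each point reduced, as required.

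I expect the main obstacle to be precisely this no-fixed-component step, and within it the case of a fixed component of type $(1,2)$ or $(2,1)$: such a cubic curve can a priori contain many of the $P_i$, so one has to use the general-position hypothesis together with $\dim\fd=2$ in an essential way to rule this out (in the $\bP^3$ picture it corresponds to all eight points lying on a twisted cubic). A secondary technical point is checking that the reduction to $\widetilde{D_0}$ genuinely captures the infinitely near base points over the $P_i$: these lie on the exceptional divisors $E_i$, but since $\widetilde{D_0}$ meets each $E_i$ and the base locus of $\widetilde{\fd}$ is contained in $\widetilde{D_0}$, they too are controlled by the pencil on $\widetilde{D_0}$.
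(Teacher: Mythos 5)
Your route is essentially the paper's own: from the proof of Proposition~\ref{prop. General2} extract a smooth member $D_0\in\fd$, restrict to get a pencil of $(2,2)$-curves on $D_0\cong\bP^1\times\bP^1$ through $P_1,\dots,P_8$, argue that this pencil has no fixed component, and then use the intersection number $8$ to conclude that the base scheme is the eight reduced points, so that after blowing them up nothing (not even infinitely near) remains. The only cosmetic difference is at the end: the paper gets transversality from the triple intersection $(D_0,D_1,D_2)=8$ of three spanning members in $\bP^3$, while you read it off from two members of the restricted pencil on $D_0$; these are the same computation. Your explicit discussion of the blow-up and of base points on the exceptional divisors is a correct (and slightly more careful) rendering of what the paper leaves implicit.

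The step you leave open, however, is a genuine gap, and it is precisely the step the paper dispatches by citing Lemma~\ref{lem. 1,1} and Lemma~\ref{lem.1,2}: those lemmas only exclude fixed components whose \emph{moving} part is a pencil of $(1,1)$-, $(1,2)$-/$(2,1)$-curves (or rulings) forced through enough of the $P_i$; they say nothing when the fixed part is an irreducible $(1,2)$- or $(2,1)$-curve containing \emph{all eight} points and the moving part is the free pencil of rulings. Moreover, your expectation that general position together with $\dim\fd=2$ rules this case out cannot be realized: take the eight points on a twisted cubic $C\subset\bP^3$, i.e.\ on a smooth $(1,2)$-curve of a smooth quadric. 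No three of them are collinear ($C$ has no trisecant line, since such a line would lie on every quadric through $C$) and no five lie in a plane (a plane meets $C$ in three points), so they are in general position in the sense of the paper; every quadric through eight points of $C$ contains $C$, so $\fd$ is the net of quadrics through $C$, $\dim\fd=2$, and the base locus of $\fd$ is all of $C$ — in particular $\fd$ has unassigned base points, and on a smooth member the restricted pencil is exactly ``fixed $(1,2)$-curve plus moving ruling''. So the missing step cannot be filled from the stated hypotheses: Proposition~\ref{prop. dim3} as literally formulated needs the additional assumption that the eight points do not lie on a twisted cubic (equivalently, that the base locus of $\fd$ is finite), and the paper's own proof has the same gap at the same spot. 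Your instinct correctly identified the dangerous configuration; what fails is the hope of excluding it rather than adding it as a hypothesis.
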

\begin{proof}
From the proof of Proposition~\ref{prop. General2}, there is a smooth member $D_0$ of $\fd$. 
Since $\dim\,\fd|_{D_0}=1$, Lemma~\ref{lem. 1,1} and \ref{lem.1,2} imply that $\fd|_{D_0}$ has no fixed components. 
Thus the set of base points of $\fd|_{D_0}$ is $\{P_1,\dots,P_8\}$ as points of $D_0 \cong \bP^1\times\bP^1$. 
Hence the set of base points of $\fd$ is $\{P_1,\dots,P_8\}$. 
Moreover, the intersection number $(D_0,D_1,D_2)=8$ for $D_i\in \fd$ ($i=0,1,2$) which span $\fd$. 
This implies that $D_0$, $D_1$ and $D_2$ meet at $P_1,\dots,P_8$ transversally. 
Thus $\fd$ has no unassigned base points. 
\end{proof}

\begin{Rem}\label{rem. base pts}\rm
Let $D$ be a quadratic surface in $\bP^3$, and 
let $P_1,\dots,P_8$ be eight points in $\bP^3$. 
Put $\fd:=|D-P_1-\dots-P_8|$. 
If $\fd$ has no unassigned base points, then $P_1,\dots,P_8$ are in general position.  
\end{Rem}
\begin{proof}
Suppose that there exists a line $L\subset\bP^3$ with $P_1,P_2,P_3\in L$. 
Let $D_0$ be a member of $\fd$, and let $H$ be a hyperplane with $L\subset H$ and $H\not\subset D_0$. 
Then $D_0|_H$ is a conic on $H\cong\bP^2$ passing through $P_1,P_2,P_3$. 
Thus $L\subset D_0$, which is a contradiction. 

Suppose there exists a hyperplane $H\subset\bP^3$ with $P_1,\dots,P_5\in H$. 
By the above argument, we may assume that no three points of $P_1,\dots,P_5$ are collinear. 
If $D_0\in\fd$, then $D_0|_H$ is the unique conic on $H$ passing through $P_1,\dots,P_5$, 
which is a contradiction 
\end{proof}

We prove the following lemma about a linear system on the Hirzebruch surface $p:\Sigma_2\to\bP^1$ of degree two. 
Let $S_0$ be the section of $p$ with $S_0^2=-2$. 
Note that there is a morphism $\sigma:\Sigma_2\to\bP^3$ whose image is a cone of degree two in $\bP^3$ and which is the contraction of $S_0$. 

\begin{Lem}\label{lem. hirzebruch}
Let $p:\Sigma_2\to\bP^1$ be the Hirzebruch surface of degree two, and 
$\sigma:\Sigma_2\to\bP^3$ as above. 
Let $S_0$ and $F$ denote the section with $S_0^2=-2$ and a fiber of $p$, respectively. 
If an effective divisor $D$ on $\Sigma_2$ is linearly equivalent to $S_0+2\,F$, 
then there is a hyperplane $H$ of $\bP^3$ satisfying $\sigma^{\ast}H=D$. 
\end{Lem}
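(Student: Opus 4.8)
\emph{Proof proposal.} The plan is to show that pullback along $\sigma$ identifies the complete linear system of hyperplanes in $\bP^3$ with the complete linear system $|S_0+2F|$ on $\Sigma_2$; the lemma is then immediate, since an effective divisor $D$ with $D\sim S_0+2F$ is the divisor of zeros of a section that must arise as $\sigma^{\ast}$ of a linear form.

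First I would record that $\sigma$ is the morphism attached to $|S_0+2F|$, so that $\sigma^{\ast}\calO_{\bP^3}(1)\cong\calO_{\Sigma_2}(S_0+2F)$. This is forced: writing $L:=\sigma^{\ast}\calO_{\bP^3}(1)=a\,S_0+b\,F$, we have $L\cdot S_0=0$ because $S_0$ is contracted by $\sigma$, hence $b=2a$; and since $\sigma$ is birational onto its image, $L^2=2a^2$ equals $\deg\sigma(\Sigma_2)=2$, so $a=1$. Consequently $\sigma^{\ast}\colon H^0(\bP^3,\calO_{\bP^3}(1))\to H^0(\Sigma_2,\calO_{\Sigma_2}(S_0+2F))$ is a linear map, and it is injective because $\sigma(\Sigma_2)$ is a quadric, hence nondegenerate in $\bP^3$, so no nonzero linear form vanishes on it. Thus $h^0(\Sigma_2,\calO_{\Sigma_2}(S_0+2F))\geq 4$.

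Next I would bound $h^0(\Sigma_2,\calO_{\Sigma_2}(S_0+2F))$ from above. Restricting the structure sequence of $S_0$ and twisting by $\calO_{\Sigma_2}(S_0+2F)$ gives
\[
0\longrightarrow\calO_{\Sigma_2}(2F)\longrightarrow\calO_{\Sigma_2}(S_0+2F)\longrightarrow\calO_{\Sigma_2}(S_0+2F)\big|_{S_0}\longrightarrow 0,
\]
and since $(S_0+2F)\cdot S_0=S_0^2+2=0$, the restriction $\calO_{\Sigma_2}(S_0+2F)|_{S_0}$ is trivial on $S_0\cong\bP^1$. Taking global sections, $h^0(\calO_{\Sigma_2}(S_0+2F))\leq h^0(\calO_{\Sigma_2}(2F))+1=h^0(\bP^1,\calO_{\bP^1}(2))+1=4$. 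Combining with the previous step, $h^0(\Sigma_2,\calO_{\Sigma_2}(S_0+2F))=4$, so $\sigma^{\ast}$ is an isomorphism of four-dimensional vector spaces and therefore induces an isomorphism between the complete linear system of hyperplanes of $\bP^3$ and $|S_0+2F|$.

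Finally, given an effective divisor $D$ on $\Sigma_2$ with $D\sim S_0+2F$, I would choose $s\in H^0(\Sigma_2,\calO_{\Sigma_2}(S_0+2F))$ with $\operatorname{div}(s)=D$; by the isomorphism just established, $s=\sigma^{\ast}t$ for a linear form $t$ on $\bP^3$, cutting out a hyperplane $H$, and then $\sigma^{\ast}H=\operatorname{div}(\sigma^{\ast}t)=D$, as required. I do not expect a genuine obstacle; the only step deserving care is the identification $\sigma^{\ast}\calO_{\bP^3}(1)\cong\calO_{\Sigma_2}(S_0+2F)$ — equivalently, that the given contraction of $S_0$ onto a quadric cone is exactly the map defined by $|S_0+2F|$ rather than by a higher multiple of the positive section — which the intersection-number computation above settles.
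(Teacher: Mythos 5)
Your proposal is correct and follows essentially the same route as the paper: the paper's proof also observes that $\sigma^{\ast}$ maps the $3$-dimensional system of hyperplanes injectively into the $3$-dimensional system $|S_0+2F|$, hence bijectively. You merely supply the details the paper leaves implicit (the identification $\sigma^{\ast}\calO_{\bP^3}(1)\cong\calO_{\Sigma_2}(S_0+2F)$ and the computation $h^0(\calO_{\Sigma_2}(S_0+2F))=4$), and these are carried out correctly.
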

\begin{proof}
We have a morphism $\sigma^{\ast}:|H|\to |S_0+2\,F|$. 
It is clear that $\sigma^{\ast}$ is injective. 
Since $\dim|H|=\dim|S_0+2\,F|=3$, $\sigma^{\ast}$ is bijection. 
\end{proof}

\subsection{Syzygetic quartic surfaces}

In this subsection we recall the geometry of nodes on a quartic surface.

\begin{Def}\label{def. conical}{\rm
Let $X\subset\bP^3$ be a hypersurface. 
\begin{enumerate}[\rm (i)]
\item We call $X$ a \textit{nodal surface} if all singularities of $X$ are nodes, i.e. $A_1$-singularities. 
Moreover, for a positive integer $r$, we call a nodal surface $X$ a \textit{$r$-nodal surface} if the number of nodes of $X$ is equal to $r$.  
\item 
We call a nodal quartic surface $X$ a \textit{syzygetic quartic surface} if 
there are three quadratic forms $f_1,f_2,f_3\in H^0(\bP^3,\calO_{\bP^3}(2))$ 
such that $X$ is defined by $f_1^2+f_2^2+f_3^2=0$, 
otherwise, we call $X$ an \textit{asyzygetic quartic surface}. 
\item
Let $X$ be a syzygetic quartic surface defined by $f_1^2+f_2^2+f_3^2=0$. 
We call a node $P\in X$ an \textit{assigned node} with respect to $f_1,f_2,f_3$ if $f_1=f_2=f_3=0$ at $P$, 
otherwise, we call $P$ an \textit{unassigned node}. 
\end{enumerate}
}\end{Def}

\begin{Rem}\rm
By uniqueness of canonical forms of quadratic forms, 
assigned nodes of a syzygetic quartic surface do not depend on the choice of quadratic forms $f_1, f_2, f_3\in H^0(\bP^3,\calO_{\bP^3}(2))$. 
Hence we simply call an assigned nodes with respect to $f_1, f_2, f_3$ an assigned nodes. 
\end{Rem}

\begin{Prop}\label{prop. conical}
Let $X\subset\bP^3$ be a nodal quartic surface. 
Then $X$ is syzygetic if and only if there are eight nodes $P_1,\dots,P_8\in X$ in general position with $\dim\fd=2$, where $\fd:=|D-P_1-\dots-P_8|$ for a quadratic surface $D\subset\bP^3$. 
In this case, the eight points $P_1,\dots,P_8$ are the assigned nodes of $X$. 
\end{Prop}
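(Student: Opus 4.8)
The statement is an if-and-only-if, and the natural strategy is to connect the algebraic definition of a syzygetic quartic (defined by $f_1^2+f_2^2+f_3^2=0$) with the geometric condition on eight nodes via a linear system of quadrics. First I would prove the forward direction. Suppose $X$ is syzygetic, defined by $f_1^2+f_2^2+f_3^2=0$. After a linear change of coordinates on the pencil of quadratic forms, I may write this as $f_1f_2 = f_3 f_4$ or, more conveniently, observe that $X$ contains the base locus of the net $\Lambda$ spanned by $f_1, f_2, f_3$. The eight base points $P_1,\dots,P_8$ of a general net of quadrics in $\bP^3$ (the classical result that three quadrics in $\bP^3$ meet in $2^3=8$ points) all lie on $X$, since each $P_i$ has $f_1=f_2=f_3=0$ there, hence $f_1^2+f_2^2+f_3^2=0$. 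Moreover, because $\partial/\partial x_j(f_1^2+f_2^2+f_3^2) = 2\sum_k f_k \partial f_k/\partial x_j$ vanishes at each $P_i$, each $P_i$ is a singular point of $X$; one checks the singularity is a node by a local Hessian computation (the Hessian of $\sum f_k^2$ at a common zero is $2\sum (\nabla f_k)(\nabla f_k)^{\mathrm{T}}$, which is nondegenerate precisely when the three gradients span $\bP^3$, i.e. generically). So $X$ has (at least) these eight nodes, and $\fd := |D - P_1 - \dots - P_8|$ contains the two-dimensional net $\Lambda$, giving $\dim\fd \geq 2$; Proposition~\ref{prop. General2} forces $\dim\fd \leq 2$, hence $\dim\fd = 2$, and Remark~\ref{rem. base pts} (or the generic position of a net's base locus) gives that $P_1,\dots,P_8$ are in general position. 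These eight points are the assigned nodes by Definition~\ref{def. conical}(iii).

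For the converse, suppose there are eight nodes $P_1,\dots,P_8 \in X$ in general position with $\dim\fd = 2$, where $\fd = |D - P_1-\dots-P_8|$ for a quadric $D$. Then $\fd$ is a net of quadrics; choose three members $f_1, f_2, f_3$ spanning $\fd$. By Proposition~\ref{prop. dim3}, $\fd$ has no unassigned base points, so the scheme-theoretic base locus of $\fd$ is exactly the reduced set $\{P_1,\dots,P_8\}$, and the three quadrics meet transversally there. I want to show $X$ is a linear combination of $f_1^2, f_2^2, f_3^2, f_1f_2, f_1f_3, f_2f_3$ — i.e. $X$ lies in the image of $\mathrm{Sym}^2\fd \to H^0(\bP^3,\calO_{\bP^3}(4))$ — and moreover that by diagonalizing the associated quadratic form one can arrange $X : g_1^2+g_2^2+g_3^2 = 0$. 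The key point is that $X$, being a quartic singular at all eight transverse base points of the net, must vanish to order $\geq 2$ along the base locus; since the base locus is cut out transversally by $\fd$, the ideal of the "double base locus" is $(\fd)^2 = (f_1,f_2,f_3)^2$ locally, so $X \in H^0(\bP^3, \calI^2(4))$ where $\calI = \calI_{\{P_i\}}$. A dimension count (or a Koszul/Cayley–Bacharach argument) shows $H^0(\bP^3,\calI^2(4))$ is exactly the $6$-dimensional space of quadratic expressions in $f_1,f_2,f_3$, so $X = \sum_{i\leq j} a_{ij} f_if_j$; the symmetric matrix $(a_{ij})$ can be diagonalized over $\bC$ to rank $\leq 3$, and (using that $X$ is a genuine nodal quartic, not a perfect square or lower) the rank is exactly $3$, yielding $X : g_1^2+g_2^2+g_3^2=0$. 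Finally, the assigned nodes with respect to $g_1,g_2,g_3$ are by definition the common zeros of $g_1,g_2,g_3$, which is the base locus $\{P_1,\dots,P_8\}$.

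**Main obstacle.** The delicate step is the converse, specifically showing that a quartic surface with eight nodes at the transverse base points of a net of quadrics is necessarily expressible as a quadratic form in those quadrics — this is essentially a Cayley–Bacharach / syzygy statement. The naive inclusion $X \in (f_1,f_2,f_3)^2$ needs justification: singularity of $X$ at each $P_i$ gives vanishing of $X$ and all first partials at $P_i$, i.e. $X \in \mathfrak{m}_{P_i}^2$ for each $i$, and one must upgrade this pointwise statement to the global $H^0(\calI_{\{P_i\}}^2(4))$ using that the eight points impose independent conditions appropriately and that $(f_1,f_2,f_3)$ is a regular sequence locally at each $P_i$ (transversality from Proposition~\ref{prop. dim3}). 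Then the dimension count $\dim H^0(\bP^3,\calI_{\{P_i\}}^2(4)) = 6$ must be verified — one can compute this via the exact sequence relating $\calI^2$ to $\mathrm{Sym}^2$ of the conormal sheaf along the (zero-dimensional, hence "curvilinear-free") base scheme, or simply note the map $\mathrm{Sym}^2\fd \to H^0(\calI^2(4))$ is injective (no quadratic syzygies among three general quadrics) and surjective by the count $h^0(\calO(4)) - 8\binom{2+2}{2} = 35 - 8\cdot 1 \cdot$ ... — more carefully, each node imposes $4$ linear conditions ($X$ and three partials), but the conditions at the eight points are not independent; the Cayley–Bacharach phenomenon for the complete intersection $f_1=f_2=f_3=0$ is exactly what makes the count come out to $6$ rather than $35-32=3$. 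I expect this cohomological/Cayley–Bacharach bookkeeping to be the technical heart of the argument; everything else (the forward direction, the diagonalization, identifying assigned nodes) is routine.
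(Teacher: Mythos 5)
Your forward direction is essentially the paper's argument, but your converse has a genuine gap at precisely the step you yourself flag as the ``technical heart''. The entire converse rests on the claim that $H^0(\bP^3,\calI_Z^2(4))$ (quartics singular at the eight base points $Z=\{P_1,\dots,P_8\}$) coincides with the $6$-dimensional space of quadratic expressions in $f_1,f_2,f_3$; you assert this, sketch three possible routes (Koszul, conormal sheaf, a dimension count that visibly trails off mid-formula), and prove none of them. This is not a routine verification: the naive count gives $35-32=3$, and the whole point is to control the failure of the $32$ conditions to be independent, i.e.\ to show that the degree-$4$ piece of the \emph{symbolic} square of the ideal of $Z$ equals the degree-$4$ piece of the ordinary square $(f_1,f_2,f_3)^2$. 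Your remark that the sheaf generated by the $f_if_j$ equals $\calI_Z^2$ locally (by transversality, $\fm_{P_i}=(f_1,f_2,f_3)$ at each $P_i$) does not settle this, since global sections of that ideal sheaf in a fixed degree can a priori exceed the span of the generators' multiples; closing the gap requires an actual argument, e.g.\ that $Z$ is a complete intersection, so $S/(f_1,f_2,f_3)$ is Cohen--Macaulay, $I/I^2$ is free over it, hence $I^n$ is unmixed and the symbolic and ordinary powers agree --- or an explicit Cayley--Bacharach/Koszul computation. As written, the converse is incomplete. (The remaining steps --- ruling out rank $1$ and $2$ for the symmetric matrix, identifying the assigned nodes --- are fine.)

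Two further comments. First, in the forward direction you appeal to the ``general net'' picture ($2^3=8$ distinct base points, Hessian nondegenerate ``generically''), but the net here is the given one; the paper instead deduces from the nodal hypothesis that the base locus is finite (else $X$ is singular along a curve) and consists of exactly $8$ points met transversally (a point of excess intersection multiplicity would be a non-nodal singularity of $f_1^2+f_2^2+f_3^2=0$, since the quadratic part $\sum\ell_i^2$ has rank $3$ iff the three differentials are independent). Your argument should be rephrased this way; it is a fixable imprecision. Second, note that the paper's converse avoids your Cayley--Bacharach problem altogether: it uses Proposition~\ref{prop. dim3} to resolve the map $\Phi_{\fd}:\bP^3\dashrightarrow\bP^2$ given by the net, shows via $(\pi^{\ast}L)^2=0$ and the exceptional conics over the nodes that the strict transform of $X$ maps onto an irreducible conic $\Gamma\subset\bP^2$, and then recovers $X$ as the pullback of $\Gamma$, which is by construction a quadratic form in $f_1,f_2,f_3$. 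If you want to keep your ideal-theoretic route, you must supply the missing $h^0(\calI_Z^2(4))=6$ argument; otherwise the paper's geometric route is the shorter path.
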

\begin{proof}
Suppose that $X$ is defined by $f_1^2+f_2^2+f_3^2=0$ for some quadratic forms $f_1,\, f_2,\, f_3$. 
Let $C_i$ be the quadratic surface given by $f_i=0$ for $i=1,2,3$. 
Then $X$ has singularities at $C_1\cap C_2\cap C_3$. 
Hence the set $C_1\cap C_2\cap C_3$ is contained in the singular locus of $X$. 
This implies that $f_1,f_2$ and $f_3$ are linearly independent over $\bC$ 
since the singular locus of $X$ consists of finite points. 
Moreover, since the intersection number $(C_1,C_2,C_3)=8$, 
if $\sharp (C_1\cap C_2\cap C_3)<8$, then $X$ has a singularity which is not a node; 
this is a contradiction. 
Hence $C_1\cap C_2\cap C_3$ consists of eight points $P_1,\dots,P_8$, 
and $C_1,C_2,C_3\in\fd:=|D-P_1-\dots-P_8|$, and $\dim \fd\geq 2$.  
If there exists a line $L\subset\bP^3$ through three points of $P_1,\dots,P_8$, 
then $L\subset C_i$ ($i=1,2,3$) since the intersection number $(L, C_i)=2$; 
hence $X$ is singular along $L$, which is a contradiction. 
Thus no three of the eight nodes of $X$ are collinear. 
If there exists a hyperplane $H\subset\bP^3$ through five points of $P_1,\dots,P_8$, 
then either $H\subset C_i$ or $C_i|_H$ is the unique conic on $H$ through the five nodes of $X$; 
hence $\dim(C_1\cap C_2\cap C_3)\geq 1$, 
which is a contradiction. 
Therefore $P_1,\dots,P_8$ are in general position.  
From Proposition~\ref{prop. General2}, we have $\dim\fd=2$. 

Suppose that there exist eight nodes $P_1,\dots,P_8\in X$ in general position and  $\dim\fd=2$. 
Let $\Phi_{\fd}:\bP^3\dashrightarrow\bP^2$ be the rational map defined by $\fd$, and let
$\sigma:\widetilde{\bP}^3\to\bP^3$ be the blowing-up of $\bP^3$ at the eight points $P_1,\dots,P_8$. 
By Proposition~\ref{prop. dim3}, we have the morphism $\widetilde{\Phi}_{\fd}:\widetilde{\bP}^3\to\bP^2$ such that $\widetilde{\Phi}_{\fd}=\Phi_{\fd}\circ\sigma$. 
Moreover, the proper transformation $\widetilde{X}$ of $X$ under $\sigma$ gives the minimal resolution of singularities $P_1,\dots,P_8$ of $X$ since $P_1,\dots,P_8$ are $A_1$-singularities of $X$. 
We denote the inclusion from $\widetilde{X}$ to $\widetilde{\bP}^3$ by $\iota:\widetilde{X}\hookrightarrow\widetilde{\bP}^3$, and put $\pi:=\widetilde{\Phi}_{\fd}\circ\iota:\widetilde{X}\to\bP^2$. 
\[\begin{diagram}
\node{\widetilde{X}} \arrow{e,t}{\iota} \arrow{se,b}{\pi} \node{\widetilde{\bP^3}} \arrow{s,r}{\widetilde{\Phi}_{\fd}} \arrow{e,t}{\sigma} \node{\bP^3} \arrow{sw,b,..}{\Phi_{\fd}} \\
\node[2]{\bP^2}
\end{diagram} \] 
Let $E_1,\dots,E_8$ be the exceptional divisors of $\sigma$. 
Since $\widetilde{\Phi}_{\fd}^{\ast}\calO_{\bP^2}(1)=\calO_{\widetilde{\bP}^3}(\sigma^{\ast}D-E_1-\dots-E_8)$, for $i=1,\dots,8$, we have 
\[
\widetilde{\Phi}_{\fd}^{\ast}\calO_{\bP^2}(1)\otimes\calO_{E_i} \cong \calO_{\widetilde{\bP}^3}(-E_i)\otimes\calO_{E_i} \cong \calO_{E_i}(1). 
\]
Hence the restriction of $\widetilde{\Phi}_{\fd}$ to $E_i$ is an isomorphism  $\widetilde{\Phi}_{\fd}|_{E_i}:E_i\overset{\sim}{\to}\bP^2$. 
Since $P_i$, $i=1,\dots,8$, are double points, $\Gamma_i:=\widetilde{\Phi}_{\fd}(E_i\cap \widetilde{X})$ are irreducible conics in $\bP^2$. 
For a line $L$ on $\bP^2$, since $\pi^{\ast}L\sim\iota^{\ast}(\sigma^{\ast}D-E_1-\dots-E_8)$, 
the self-intersection number $(\pi^{\ast}L)^2=0$. 
This implies $\dim\pi(\widetilde{X})\leq1$. 
Since $\Gamma_i\subset\pi(\widetilde{X})$, $\Gamma:=\pi(\widetilde{X})$ is an irreducible conic in $\bP^2$. 
We may assume that $\Gamma$ is given by $u_1^2+u_2^2+u_3^2=0$, where $u_1,u_2,u_3$ are homogeneous coordinates of $\bP^2$. 
Let $X'$ be the quartic surface given by $f_1^2+f_2^2+f_3^2=0$, where $f_i=\Phi_{\fd}^{\ast}u_i$. 
Note that $X'$ is the closure of $\Phi^{-1}(\Gamma)$ in $\bP^3$. 
Hence $X\subset X'$, and since both surfaces $X$ and $X'$ are quartic surfaces, we have $X=X'$. Therefore $X$ is syzygetic. 
Moreover, it is clear that $P_1,\dots,P_8$ are the assigned nodes of $X$. 
\end{proof}

\section{Improvement of criterion for nodal splitting curves of type $(2,4)$}\label{sec. improve}

In this section, we show a correspondence of syzygetic quartic surfaces and nodal splitting curves of type $(2,4)$. 
Let $X\subset\bP^3$ be a nodal quartic surface with a node at $\overline{P}_0$ 
not containing any lines through $\overline{P}_0$. 
Let $p:X\setminus\{\overline{P}_0\}\to\bP^2$ be the projection from $\overline{P}_0$, 
$\Gamma_X\subset\bP^2$ the branch locus of the double cover over induced by $p$, 
and $\Delta_X$ the image of exceptional divisor of the blowing-up of $X$ at $\overline{P}_0$ (see Section~5 for detail). 
Note that, for a sextic curve $\Gamma$ with an even contact conic $\Delta$, 
there exists a quartic surface $X\subset\bP^3$ such that $\Gamma_X=\Gamma$ and $\Delta_X=\Delta$ by Theorem~\ref{thm. corr}. 
Throughout this following, we assume that 
$X\subset\bP^3$ satisfies the above conditions and furthermore 
assume that $\Delta_X$ is a simple contact conic of $\Gamma_X$. 
Let $T_1,\dots,T_6$ denote the distinct tangent points of $\Gamma_X$ and $\Delta_X$.

\begin{Prop}\label{prop. 3,3 split}
Let $X\subset\bP^3$, $\overline{P}_0$ and $p:X\setminus\{\overline{P}_0\}\to\bP^2$ be as above. 
If there exist six nodes $\overline{P}_1,\dots,\overline{P}_6\in X$ on a conic $\overline{C}_0$ in a hyperplane $\overline{H}\subset\bP^3$, and no four points of $\overline{P}_1,\dots,\overline{P}_6$ are collinear, 
then $\Gamma_X$ is a splitting curve of type $(3,3)$ with respect to $\Delta_X$. 
Moreover, there is a $(3,3)$-curve $D^+\subset\bP^1\times\bP^1$ such that 
\[ \pi_{\Delta_X}^{\ast}\Gamma_X=D^++D^- \mbox{ and } \pi_{\Delta_X}(D^+\cap D^-)=\{P_1,\dots,P_6, T_1,\dots, T_6\}, \]
where $D^-=\iota_{\Delta_X}^{\ast}D^+$ and $P_i=p(\overline{P}_i)$ for $i=1,\dots,6$. 
\end{Prop}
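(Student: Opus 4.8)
The plan is to translate the hypothesis about the six nodes $\overline{P}_1,\dots,\overline{P}_6$ lying on a conic in a hyperplane into a statement about the sextic $\Gamma_X$ via the correspondence of Theorem~\ref{thm. corr}, and then feed that statement into Theorem~\ref{thm. split} with $(m,n)=(3,3)$ (so $k=0$, $\alpha=6$). For the type $(3,3)$ case, the criterion (\ref{thm. split 4}) simplifies dramatically: since $k=n-m=0$ we have $l^k=1$, there are no points $Q_i$, and the conditions (iv-b), (iv-c), (iv-d) become vacuous or trivial. What remains to produce is a single cubic curve $C_3$ (playing the role of $c_n=c_3$, with $c_{n-1}=c_2$ also of degree $2$—wait, $n-1=2$) passing through the six nodes $P_i=p(\overline{P}_i)$ and the six tangent points $T_1,\dots,T_6$, such that $\gamma = c_3^2 - \delta\, c_2^2$ and no common components occur. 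So the real content is: exhibit the cubic and quadric $c_3, c_2$.

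**First I would** use Theorem~\ref{thm. corr}(\ref{thm. corr 3}), specifically the surjection $\alpha_1$, to produce the cubic. The hyperplane $\overline{H}\subset\bP^3$ containing the conic $\overline{C}_0$ does not pass through $\overline{P}_0$ (else $X$ would contain... actually one must check this, but since $\overline{C}_0\ni\overline P_1,\dots,\overline P_6$ are nodes of $X$ distinct from $\overline P_0$, and $\overline H$ a genuine hyperplane, generically $\overline P_0\notin\overline H$; if $\overline P_0\in\overline H$ one argues separately or notes it contradicts the no-lines-through-$\overline P_0$ hypothesis together with the conic passing through a node). Then $C_3 := \alpha_1(\overline H)$ is a cubic through all six tangent points $T_1,\dots,T_6$, not containing $\Delta_X$, and by condition (I) of Theorem~\ref{thm. corr}(\ref{thm. corr 3}), since each $\overline P_i\in\overline H$, we get $P_i = p(\overline P_i)\in C_3$ for $i=1,\dots,6$. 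From the explicit formula in the proof, $C_3$ is defined by $\ell_0\,g_3 - \ell_1\,g_2=0$; combined with $\Gamma_X: g_3^2 - g_2 g_4 = 0$ and $\Delta_X: g_2 = 0$, a short algebraic manipulation (completing the square / using that $g_3 \equiv \ell_1 g_2/\ell_0$ on... ) should yield $\gamma\cdot(\text{unit}) = c_3^2 - \delta\, c_2^2$ with $c_3$ proportional to the cubic $\ell_0 g_3 - \ell_1 g_2$ rescaled and $c_2$ a quadric built from $\ell_0, \ell_1$. Then I would invoke condition (\ref{thm. split 3}) of Theorem~\ref{thm. split} (which only requires the existence for one general tangent line $L$, and with $k=0$ the line $L$ plays no role) to conclude $\Gamma_X$ is a splitting curve of type $(3,3)$.

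**For the "moreover" part**, I would run the implication (\ref{thm. split 1})$\Rightarrow$(\ref{thm. split 4}) in the proof of Theorem~\ref{thm. split} backwards in spirit: that proof shows that once we have the splitting $\pi_{\Delta_X}^{\ast}\Gamma_X = D^+ + D^-$, the set $\pi_{\Delta_X}(D^+\cap D^-)$ consists of exactly $\alpha=6$ nodes of $\Gamma_X$ together with the six tangent points $T_1,\dots,T_6$, and these $\alpha$ nodes are precisely $\pi_{\Delta_X}(D^+\cap D^-) \setminus \{T_1,\dots,T_6\}$, which in turn are the points through which both $C_n$ and $C_{n-1}$ pass. Since our $C_3$ was engineered to pass through $P_1,\dots,P_6$, and $C_2$ can be checked to pass through them as well (this needs the node condition and may require the hypothesis that no four of the $\overline P_i$ are collinear, to control the reducible/degenerate members and guarantee $C_2$ is the right quadric), the six nodes in $\pi_{\Delta_X}(D^+\cap D^-)$ are forced to be exactly $P_1,\dots,P_6$. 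One also needs $C_3$ and $C_2$ to have no common component with $\Gamma_X$, $\Delta_X$ — the "not containing $\Delta_X$" is already built into $\alpha_1$, and irreducibility constraints handle the rest.

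**The main obstacle** I expect is twofold: first, verifying cleanly that the algebraic identity $\gamma\, l^k = c_n^2 - \delta\, c_{n-1}^2$ holds with $c_2$ of the correct degree — one must produce the degree-$2$ polynomial $c_2$ explicitly, and it is not immediately obvious from $\alpha_1$ alone (which outputs a cubic); likely $c_2$ comes from the fact that $\ell_0 g_3 - \ell_1 g_2$ and $g_2$ determine, via $g_3^2 - g_2 g_4$, a Pfaffian-type relation forcing $g_4$ to have a specific shape modulo $g_2$, and $c_2 = \ell_1^2/\ell_0^2 \cdot(\cdots)$ after rescaling — so some care with the non-constant $\ell_0$ is needed (actually $\ell_0\in\bC^\times$, which helps). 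Second, the genericity/position hypotheses: checking that the six nodes $P_1,\dots,P_6$ in $\bP^2$ and the six tangent points are in the configuration required so that the identification $\pi_{\Delta_X}(D^+\cap D^-)=\{P_1,\dots,P_6,T_1,\dots,T_6\}$ is forced rather than merely one possibility — this is where "no four of $\overline P_1,\dots,\overline P_6$ collinear" must be used, presumably to rule out the cubic $C_3$ degenerating (a line through four of the $P_i$ plus a residual conic) which would spoil condition (\ref{thm. split 4}-e) about no common components.
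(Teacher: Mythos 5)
There is a genuine gap at exactly the point you flag as "the main obstacle": you never actually produce the conic $c_2$, and the mechanism you guess for it (a relation forcing $g_4$ to have "a specific shape modulo $g_2$", with $c_2$ "built from $\ell_0,\ell_1$") is not the right one. Writing $\overline H:\ w+\ell=0$ with $\overline P_0=(0:0:0:1)\notin\overline H$ and $X:\ g_2w^2+2g_3w+g_4=0$, the identity you need is
\[
\gamma \;=\; g_3^2-g_2g_4\;=\;(g_3-\ell\,g_2)^2-g_2\,\bigl(g_4-2\ell\,g_3+\ell^2 g_2\bigr),
\]
so your $c_3=\alpha_1(\overline H)=g_3-\ell g_2$ is indeed the right cubic, but the remaining factor $g_4-2\ell g_3+\ell^2g_2$ is precisely the equation (in the coordinates $(x:y:z)$) of the hyperplane section $X\cap\overline H$, and what must be shown is that this quartic is a \emph{perfect square} $f_2^2$. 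This is where the hypothesis you never use -- that the six nodes lie on the conic $\overline C_0\subset\overline H$, with no four collinear -- enters: the section $X\cap\overline H$ is a quartic curve singular at the six points $\overline P_1,\dots,\overline P_6\in\overline C_0$, so by B\'ezout ($2\cdot 6>4\cdot 2$) it contains $\overline C_0$, the residual conic again passes through all six nodes, and uniqueness of the conic through them (this is what "no four collinear" buys) forces $X\cap\overline H=2\overline C_0$, i.e.\ $g_4-2\ell g_3+\ell^2g_2=\lambda f_2^2$ where $f_2$ defines $\overline C_0$. In particular $c_2$ is the equation of $\overline C_0$ itself, not something manufactured from $\ell_0,\ell_1$; without this step your appeal to condition (\ref{thm. split 3}) of Theorem~\ref{thm. split} has no identity to feed it, and a cubic through $P_1,\dots,P_6,T_1,\dots,T_6$ alone does not suffice (condition (\ref{thm. split 4}) also demands the conic $C_{n-1}$ through the six nodes).

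This is in fact how the paper argues, in normalized form: it takes $\overline H:\ w=0$ (so $\ell=0$, $c_3=g_3$), shows $g_4=f_2^2$ by the double-conic argument above, reads off that the nodes on $\overline H$ are $w=g_3=f_2=0$ so that $g_3=0$ passes through $P_1,\dots,P_6,T_1,\dots,T_6$, and concludes via $\gamma=g_3^2-g_2f_2^2$ and Theorem~\ref{thm. split}. Your "moreover" discussion is fine in spirit (and matches the paper's one-line appeal to the proof of Theorem~\ref{thm. split}: with $D^{\pm}$ cut out by $\pi_{\Delta_X}^{\ast}c_3\pm r\,\pi_{\Delta_X}^{\ast}c_2$, the set $\pi_{\Delta_X}(D^+\cap D^-)$ is $\{c_3=c_2=0\}\cup(\{c_3=0\}\cap\Delta_X)=\{P_1,\dots,P_6,T_1,\dots,T_6\}$, with the count $D^+.D^-=18$ confirming there is nothing else), but as written your argument never invokes $\overline C_0$, so the central step of the proof is missing rather than merely deferred.
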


\begin{proof}
Without loss of generality, we may assume that
\[ (\star) \begin{cases}
\overline{P}_0=[0:0:0:1], \\
\text{$X$ is defined by $g_2 w^2+2 g_3 w+g_4=0$, }
\end{cases}\]
where $g_i\in\bC[x,y,z]$ ($i=2,3,4$) are homogeneous polynomials of degree $i$. 
We may also assume that the hyperplane $\overline{H}$ is defined by $w=0$. 
Note that $X\cap \overline{H}$ is a quartic divisor on $\overline{H}$ which is singular at the six points $\overline{P}_1,\dots,\overline{P}_6$. 
Since no four points of $\overline{P}_1,\dots,\overline{P}_6$ are collinear, 
$\overline{C}_0$ is the unique conic on $\overline{H}$ through $\overline{P}_1,\dots,\overline{P}_6$. 
Hence $w=g_4=0$ defines the divisor $2\overline{C}_0$ on $\overline{H}$. 
Thus there is a homogeneous polynomial $f_2\in\bC[x,y,z]$ of degree $2$ such that 
$g_4=f_2^2$, and 
$\{\overline{P}_1,\dots,\overline{P}_6\}$ is defined by $w=g_3=f_2=0$. 
Hence $g_3=0$ defines a cubic curve on $\bP^2$ through $P_1,\dots,P_6, T_1,\dots,T_6$. 
By Theorem~\ref{thm. split}, $\Gamma_X$ is a splitting curve of type $(3,3)$ with respect to $\Delta_X$. 
The second assertion follows from the proof of Theorem~\ref{thm. split}. 
\end{proof}

\begin{Lem}\label{lem. 3,3 and 2,4}
Let $\Gamma\subset\bP^2$ be a nodal sextic curve with a simple contact conic $\Delta$. 
Assume that $\pi_{\Delta}^{\ast}\Gamma=D^++D^-$ for a $(2,4)$-curve $D^+$ and $D^-=\iota_{\Delta}^{\ast}D^+$. 
Let $P_1,\dots,P_7$ be the seven nodes of $\Gamma$ such that $\pi_{\Delta}(D^+\cap D^-)=\{P_1,\dots,P_7,T_1,\dots,T_6\}$. 
Then the following conditions hold; 
\begin{enumerate}[\rm (i)]
\item\label{lem. 3,3 and 2,4 1}%
no four points of $P_1,\dots,P_7$ are collinear; 
\item\label{lem. 3,3 and 2,4 2}%
if $\pi_{\Delta}^{\ast}\Gamma=E^++E^-$ for a $(3,3)$-curve $E^+\subset\bP^1\times\bP^1$ and $E^-=\iota_{\Delta}^{\ast}E^+$, then 
\[ \pi_{\Delta}(E^+\cap E^-)\not\subset\pi_{\Delta}(D^+\cap D^-)=\{P_1,\dots,P_7,T_1,\dots,T_6\}. \]
\end{enumerate}
\end{Lem}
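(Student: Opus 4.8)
\emph{Part (i).} Suppose four of the $P_i$, say $P_1,\dots,P_4$, lie on a line $L_0$. Each $P_j$ is a node of $\Gamma$, so $\I_{P_j}(L_0,\Gamma)\geq 2$; if $L_0$ were not a component of $\Gamma$ this would give $\deg(L_0\cdot\Gamma)\geq 8>6$, impossible. Hence $L_0\subset\Gamma$; write $\Gamma=L_0+\Gamma'$ with $\Gamma'$ a quintic. Since $\Gamma$ is nodal with seven nodes, the five points of $L_0\cap\Gamma'$ are distinct, are nodes of $\Gamma$, and are smooth points of $\Gamma'$, so $\Gamma'$ has exactly two nodes. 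As $\Gamma$ is a splitting curve, so is $\Gamma'$; its type $(m',n')$ satisfies $m'+n'=5$, hence equals $(1,4)$ or $(2,3)$, and Corollary~\ref{lem. number of nodes} forces $\Gamma'$ to have at least $\tfrac12(m'^2+n'^2-5)\geq 4$ nodes -- a contradiction. (Equivalently, decomposing $\pi_{\Delta}^{\ast}\Gamma=(L_0^++L_0^-)+\pi_{\Delta}^{\ast}\Gamma'$ and counting which preimages of $L_0\cap\Gamma'$ land on $D^{\pm}$ shows that at most three of the $P_i$ can lie on $L_0$.)

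\emph{Part (ii): reduction.} Assume $\pi_{\Delta}(E^+\cap E^-)\subset\{P_1,\dots,P_7,T_1,\dots,T_6\}$. As in the proof of Theorem~\ref{thm. split}, $\pi_{\Delta}(E^+\cap E^-)=\{Q_1,\dots,Q_6,T_1,\dots,T_6\}$ with $Q_1,\dots,Q_6$ distinct nodes of $\Gamma$, so the assumption says $\{Q_1,\dots,Q_6\}\subset\{P_1,\dots,P_7\}$. If $\Gamma$ were irreducible, $\pi_{\Delta}^{\ast}\Gamma$ would consist of two irreducible curves exchanged by $\iota_{\Delta}$, so the splitting type of $\Gamma$ -- read off from the bidegree of one of them -- would be unique; being of type $(2,4)$, $\Gamma$ could not be of type $(3,3)$ and there would be nothing to prove. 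So assume $\Gamma$ is reducible, $\Gamma=\sum_k\Gamma_k$ into irreducible components; as $\Gamma$ is a splitting curve each $\Gamma_k$ splits, say $\pi_{\Delta}^{\ast}\Gamma_k=A_k+\iota_{\Delta}^{\ast}A_k$ with $A_k$ irreducible of bidegree $(a_k,b_k)$, $a_k+b_k=\deg\Gamma_k$, and $A_k\neq\iota_{\Delta}^{\ast}A_k$.

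\emph{Part (ii): parity and conclusion.} Each of $D^+$ and $E^+$ is the sum over $k$ of one of $A_k,\iota_{\Delta}^{\ast}A_k$. Comparing bidegrees, the class of $D^+-E^+$ in $\mathrm{Pic}(\bP^1\times\bP^1)$ equals $c\,(1,-1)$ with $c=\sum_k\varepsilon_k(a_k-b_k)$, $\varepsilon_k\in\{0,\pm1\}$; since it also equals $(2,4)-(3,3)=-(1,-1)$ we get $c=-1$, an odd number. Hence some component $\Gamma_k$ of odd degree has $\varepsilon_k\neq0$, and since $\deg\Gamma=6$ this forces a line or a cubic component; in fact matching bidegrees shows there is exactly one component $\Gamma_0$ (of odd degree, hence a line or a cubic) whose representatives in $D^+$ and in $E^+$ differ, while every other component contributes the same curve to $D^+$ and to $E^+$. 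Writing $A_0$ for the representative occurring in $E^+$ (so $\iota_{\Delta}^{\ast}A_0$ occurs in $D^+$) and $\Gamma'=\Gamma-\Gamma_0$, a direct expansion of $D^+\cap D^-$ and of $E^+\cap E^-$ shows the two sets agree except for a cross term: $\pi_{\Delta}(D^+\cap D^-)$ contains the images of $A_0$ meeting the $\iota_{\Delta}$-conjugates of the other chosen representatives, whereas $\pi_{\Delta}(E^+\cap E^-)$ contains the images of $A_0$ meeting those representatives themselves. Since $\Gamma$ is nodal, over each point of $\Gamma_0\cap\Gamma'$ the two preimages lie one in each of these two configurations, so the two cross terms partition $\Gamma_0\cap\Gamma'$ and in particular are disjoint and non-empty. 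The cross term belonging to $\pi_{\Delta}(E^+\cap E^-)$ consists of nodes of $\Gamma$ lying on $\Gamma_0$, which are neither tangent points of $\Gamma$ and $\Delta$ nor nodes of $\Gamma'$, hence disjoint from all of $\pi_{\Delta}(D^+\cap D^-)$. This contradicts $\{Q_j\}\subset\{P_i\}$ and proves (ii).

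\emph{Main obstacle and an alternative.} The delicate step is the last one: tracking, for the distinguished component $\Gamma_0$, which preimage of each point of $\Gamma_0\cap\Gamma'$ lands on $A_0$ and which on the chosen representatives of the remaining components, and verifying that the two ``cross terms'' are genuinely disjoint. This is exactly where part (i) is used (to bound collinearities within $\Gamma_0\cap\Gamma'$ and within any reducible component) together with the simple-contact hypothesis (so tangent points and nodes of $\Gamma$ stay distinct). A shorter route, once the link between $(2,4)$-splittings and syzygetic quartic surfaces is available (compare Section~\ref{sec. correspondence} and Proposition~\ref{prop. conical}), is to observe that a $(2,4)$-splitting corresponds to a syzygetic quartic surface whose eight assigned nodes are in general position -- so no five of them lie in a hyperplane -- whereas a simultaneous $(3,3)$-splitting with $\{Q_j\}\subset\{P_i\}$ would place six of those eight nodes on a conic inside a hyperplane, which is impossible.
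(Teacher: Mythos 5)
Your part (ii) has a genuine gap at the step ``matching bidegrees shows there is exactly one component $\Gamma_0$ whose representatives in $D^+$ and in $E^+$ differ.'' The parity relation $\sum_k\varepsilon_k(a_k-b_k)=-1$ only shows that \emph{some} odd-degree component has $\varepsilon_k\neq0$; it does not bound the number of differing components, and that number can indeed exceed one: if $\Gamma$ splits into six tangent lines of $\Delta$, then $\pi_{\Delta}^{\ast}\Gamma=\sum_i(\ell_i^++\ell_i^-)$ and one may take $D^+=\ell_1^++\ell_2^++\ell_3^-+\ell_4^-+\ell_5^-+\ell_6^-$ (bidegree $(2,4)$) and $E^+=\ell_1^-+\ell_2^-+\ell_3^++\ell_4^++\ell_5^++\ell_6^-$ (bidegree $(3,3)$), which differ in five components. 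Since your whole cross-term analysis, the disjointness argument, and the concluding contradiction are carried out only under the ``exactly one'' hypothesis, the proof does not cover all $(3,3)$-splittings $E^+$. (Two further points inside that analysis: non-emptiness of the cross term attached to $E^+$ does not follow from its being part of a partition of $\Gamma_0\cap\Gamma'$ and needs an intersection-number check, and your identification of which cross term sits in $\pi_{\Delta}(D^+\cap D^-)$ versus $\pi_{\Delta}(E^+\cap E^-)$ is reversed, though that is harmless.) The paper's argument avoids all of this: set $C^+$ equal to the common part of $D^+$ and $E^+$; then $E^+-C^+=\iota_{\Delta}^{\ast}(D^+-C^+)$ automatically, $C^+\neq0$ by comparing bidegrees, and any point of $(E^+-C^+)\cap C^-\subset E^+\cap E^-$ lies over a node of $\Gamma$ both of whose upstairs branches belong to $D^-$, hence over a node not in $\pi_{\Delta}(D^+\cap D^-)$. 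Your suggested ``shorter route'' via syzygetic quartic surfaces is moreover circular: the assertion that a $(2,4)$-splitting puts the eight assigned nodes in general position is part of Theorem~\ref{thm. conical}, whose proof uses the present lemma.

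In part (i), your main argument assumes $\Gamma$ has exactly seven nodes (``$\Gamma$ is nodal with seven nodes''), whereas the lemma only singles out the seven nodes lying under $D^+\cap D^-$; $\Gamma$ may have further nodes (those lying under nodes of $D^{\pm}$), and the lemma is applied in the proof of Theorem~\ref{thm. conical} to curves $\Gamma_X$ that are not assumed $7$-nodal. So the count ``$\Gamma'$ has exactly two nodes,'' and with it the contradiction via Corollary~\ref{lem. number of nodes}, is not available in the required generality. Your parenthetical remark is in substance the paper's proof: writing $D^{\pm}=L_0^{\pm}+D_0^{\pm}$, all four collinear points $P_i$ would lie in $\pi_{\Delta}(L_0^+\cap D_0^-)$, while $L_0^+.\,D_0^-\leq 3$; but as written it is only a sketch, and you should make that intersection-number computation (with the two possible bidegrees of $L_0^+$ and $D_0^-$) the actual argument rather than an aside.
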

\begin{proof}
We first prove the condition (\ref{lem. 3,3 and 2,4 1}). 
Suppose that four nodes $P_1,\dots,P_4\in \Gamma$ are on a line $L_0\subset\bP^2$. 
Since $L_0.\Gamma_X=6$, $L_0$ is a component of $\Gamma_X$, say $\Gamma_X=L_0+\Gamma_X'$. 
Hence $D^+=L_0^++D_0^+$ and $D^-=L_0^-+D_0^-$, where $\pi_{\Delta_X}^{\ast}L_0=L_0^++L_0^-$ and $\pi_{\Delta_X}^{\ast}\Gamma_X'=D_0^++D_0^-$. 
Note that $D_0^-$ and $L_0^+$ are either $(3,2)$- and $(0,1)$-curves or $(4,1)$- and $(1,0)$-curves respectively. 
In this case, the set $\{P_1,\dots,P_4\}$ is the image of $L_0^+\cap D_0^-$ under $\pi_{\Delta}$, 
which is contradiction to $L_0^+.D_0^-\leq 3$. 
Therefore no four nodes of $P_1,\dots,P_7$ are collinear. 

Suppose that $\pi_{\Delta}^{\ast}\Gamma=E^++E^-$ for a $(3,3)$-curve $E^+\subset\bP^1\times\bP^1$ and $E^-=\iota_{\Delta}^{\ast}E^+$. 
Let $C^+$ be the common component of $D^+$ and $E^+$, and put $D^+=D_0^++C^+$, $E^+=E^+_0+C^+$,  $D^-=D_0^-+C^-$ and $E^-=E_0^-+C^-$, 
where $C^-=\iota_{\Delta}^{\ast}C^+$. 
Then we have $E_0^+=D_0^-$ and $E_0^-=D_0^+$. 
Since $E_0^+\cap C^-\subset E^+\cap E^-$ and $E_0^+\cap C^-=D_0^-\cap C^-\not\subset D^+\cap D^-$, 
we obtain $E^+\cap E^-\not\subset D^+\cap D^-$. 
Thus the condition (\ref{lem. 3,3 and 2,4 2}) holds. 
\end{proof}

\begin{Th}\label{thm. conical}
Let $X\subset\bP^3$, $\overline{P}_0$ and $p:X\setminus\{\overline{P}_0\}\to\bP^2$ be as in Proposition~\ref{prop. 3,3 split}. 
Then the followings are equivalent: 
\begin{enumerate}[\rm (i)]
\item\label{thm. conical 1} 
$X$ is syzygetic such that $\overline{P}_0$ is an assigned node of $X$; 
\item\label{thm. conical 2} 
$\Gamma_X$ is a splitting curve of type $(2,4)$ with respect to $\Delta_X$; 
\item\label{thm. conical 3}
there exist seven nodes $\overline{P}_1,\dots,\overline{P}_7$ of $X$ satisfying the following conditions: 
\begin{enumerate}[\rm ({iii-}a)]
\item\label{thm. conical 3-1} 
there exist no conics on $\bP^2$ through the seven points $P_1,\dots, P_7$, 
where $P_i=p(\overline{P}_i)$ for $i=1,\dots,7$; 
\item\label{thm. conical 3-2}
$\dim\fd\geq 2$, 
where $\fd:=|4\,L-P_1-\dots-P_7-T_1-\dots-T_6|$ for a line $L\subset\bP^2$; 
\item\label{thm. conical 3-3} 
if three points $P_{i_1}, P_{i_2}, P_{i_3}$ of $P_1,\dots, P_7$ are collinear, 
then any cubic curve through the nine points  $P_{i_1},P_{i_2},P_{i_3},T_1,\dots,T_6$ contains $\Delta_X$; 
\item\label{thm. conical 3-4}
 for any five points $P_{i_1},\dots,P_{i_5}$ of $P_1,\dots,P_7$, 
there exist no cubic curves through the eleven points $P_{i_1},\dots,P_{i_5},T_1,\dots,T_6$. 
\end{enumerate}
\end{enumerate}
\end{Th}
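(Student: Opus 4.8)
The plan is to prove the two equivalences $(\ref{thm. conical 1})\Leftrightarrow(\ref{thm. conical 3})$ and $(\ref{thm. conical 2})\Leftrightarrow(\ref{thm. conical 3})$, translating freely between $X\subset\bP^3$ and $(\Gamma_X,\Delta_X)\subset\bP^2$ via the correspondence of Theorem~\ref{thm. corr}, and between syzygetic quartic surfaces and a dimension condition via Proposition~\ref{prop. conical}. Fix coordinates $(x:y:z:w)$ with $\overline P_0=[0:0:0:1]$ and write $X:g_2w^2+2g_3w+g_4=0$, so $\Gamma_X:g_3^2-g_2g_4=0$, $\Delta_X:g_2=0$ (Theorem~\ref{thm. corr}(\ref{thm. corr 2})). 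Because $\Delta_X$ is a contact conic no node of $\Gamma_X$ lies on it, and because the blowing-up at $\overline P_0$ is an isomorphism near every node of $X$ other than $\overline P_0$ while a node of the branch curve of a double cover produces an $A_1$-point of the cover, the projection $p$ from $\overline P_0$ carries the nodes of $X$ other than $\overline P_0$ bijectively onto the nodes of $\Gamma_X$; in particular, for a splitting $\pi_{\Delta_X}^{\ast}\Gamma_X=D^++D^-$ the points $\pi_{\Delta_X}(D^+\cap D^-)\setminus\{T_1,\dots,T_6\}$ lift to nodes $\overline P_i$ of $X$, and conversely seven nodes of $X$ determine seven nodes $P_i$ of $\Gamma_X$.

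The heart of the argument is a dictionary between configurations of $\overline P_0,\overline P_1,\dots,\overline P_7$ in $\bP^3$ and of $P_1,\dots,P_7,T_1,\dots,T_6$ in $\bP^2$. First, $|2H-\overline P_0-\overline P_1-\dots-\overline P_7|$ consists exactly of the quadrics $\{a_1w+a_2=0\}$ (with $a_i\in H^0(\bP^2,\calO_{\bP^2}(i))$) passing through $\overline P_1,\dots,\overline P_7$, and the assignment $(a_1,a_2)\mapsto a_1g_3-a_2g_2$ is a linear injection (because $g_2,g_3$ have no common factor) which, by the description of $\alpha_2$ in Theorem~\ref{thm. corr}(\ref{thm. corr 3}), identifies this system with $\fd=|4L-P_1-\dots-P_7-T_1-\dots-T_6|$; hence $\dim|2H-\overline P_0-\dots-\overline P_7|=\dim\fd$, so (\ref{thm. conical 3-2}) is the condition $\dim|2H-\overline P_0-\dots-\overline P_7|\ge2$. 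Second, using the surjection $\alpha_1$ of Theorem~\ref{thm. corr}(\ref{thm. corr 3}) (a hyperplane $H_1\not\ni\overline P_0$ maps onto a cubic through all $T_j$ not containing $\Delta_X$, with $\overline Q\in H_1\Leftrightarrow p(\overline Q)\in\alpha_1(H_1)$ for a node $\overline Q$ of $X$), together with the observations that a plane through $\overline P_0$ equals $p^{-1}(\ell)$ for a line $\ell$, that $X$ contains no line through $\overline P_0$, and that the unique plane through three non-collinear points lying over a line contains $\overline P_0$, one checks: no line contains three of $\overline P_1,\dots,\overline P_7$ $\iff$ (\ref{thm. conical 3-3}); no plane contains $\overline P_0$ and four of them $\iff$ no four of $P_1,\dots,P_7$ are collinear; and no plane missing $\overline P_0$ contains five of them $\iff$ (\ref{thm. conical 3-4}). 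Finally, the condition $\dim\fd\ge2$ already forces that no four of $P_1,\dots,P_7$ are collinear: if four lay on a line $\ell$, every quartic of $\fd$ would contain $\ell$ or meet it transversally in those four points, and a count of quartics through the remaining nine points gives $\dim\fd\le1$. Consequently $(\ref{thm. conical 3-1})$--$(\ref{thm. conical 3-4})$ holds if and only if $\overline P_0,\overline P_1,\dots,\overline P_7$ are in general position in $\bP^3$ and $\dim|2H-\overline P_0-\dots-\overline P_7|\ge2$.

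Granting this dictionary, $(\ref{thm. conical 1})\Leftrightarrow(\ref{thm. conical 3})$ is Proposition~\ref{prop. conical}: $X$ is syzygetic with $\overline P_0$ an assigned node exactly when some seven further nodes complete it to eight nodes in general position with $\dim|2H-\dots|=2$, and by Proposition~\ref{prop. General2} the bound $\dim\le2$ is automatic once the eight points are in general position, so this amounts to $(\ref{thm. conical 3-1})$--$(\ref{thm. conical 3-4})$. For $(\ref{thm. conical 2})\Rightarrow(\ref{thm. conical 3})$: Corollary~\ref{cor. dimension} produces the seven nodes and (\ref{thm. conical 3-2}); since $\dim\fd\ge2$ no four of the $P_i$ are collinear, whence a conic through all seven $P_i$ would force $\Gamma_X$ to share a line or a conic component carrying too many of those nodes, which is impossible, so (\ref{thm. conical 3-1}) holds; and (\ref{thm. conical 3-3}), (\ref{thm. conical 3-4}) follow by lifting the hypothetical cubics through $\alpha_1$ to hyperplanes of $\bP^3$ meeting $\overline P_i$'s that lie on $D^+\cap D^-$, the residual $(3,3)$-configuration being excluded by Lemma~\ref{lem. 3,3 and 2,4}. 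For $(\ref{thm. conical 3})\Rightarrow(\ref{thm. conical 2})$ we use $(\ref{thm. conical 3})\Leftrightarrow(\ref{thm. conical 1})$ to write $X:\sum_{i=1}^3(a_iw+b_i)^2=0$; the Lagrange identity gives $\gamma=g_3^2-g_2g_4=-\sum_{i<j}(a_ib_j-a_jb_i)^2$, and choosing a general tangent line $L:l=0$ of $\Delta_X$ and a conic $h$, not divisible by $l$, for which $g_4l^2+2g_3lh+g_2h^2$ is the square of a cubic $c_{n-1}$, the identity $\gamma l^2=(g_3l+g_2h)^2-g_2c_{n-1}^2$ verifies condition (\ref{thm. split 3}) of Theorem~\ref{thm. split}, so $\Gamma_X$ is a splitting curve of type $(2,4)$.

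The main obstacle is the positional bookkeeping behind the dictionary. The subtle point is that collinearity of $P_{i_1},P_{i_2},P_{i_3}$ in $\bP^2$ does \emph{not} force collinearity of $\overline P_{i_1},\overline P_{i_2},\overline P_{i_3}$ in $\bP^3$ — they merely lie in a common plane through $\overline P_0$ — which is precisely why (\ref{thm. conical 3-3}) is stated in terms of cubics rather than as ``no three $P_i$ are collinear'', and why (\ref{thm. conical 3-4}) forbids \emph{all} cubics (not only those avoiding $\Delta_X$) through the eleven points; controlling the degenerate cases, namely $\ell\subset\Gamma_X$ and $X$ tangent to a plane along a conic through $\overline P_0$, is what forces the use of Lemma~\ref{lem. 3,3 and 2,4}(i) and of intersection theory on $\widetilde X$. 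A secondary delicate point, in $(\ref{thm. conical 3})\Rightarrow(\ref{thm. conical 2})$, is the existence of the pair $(L,h)$: equivalently, that the vector $(lb_1+hx,\,lb_2+hy,\,lb_3+hz)$ of cubics is a scalar multiple of a fixed vector $v$ with $v_1^2+v_2^2+v_3^2=1$, which one extracts from the degeneracy locus of $\left(\begin{smallmatrix}a_1&a_2&a_3\\ b_1&b_2&b_3\end{smallmatrix}\right)$, consisting precisely of the seven points $P_1,\dots,P_7$.
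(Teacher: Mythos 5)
Your outline for the equivalence of (\ref{thm. conical 1}) and (\ref{thm. conical 3}) (the dictionary via $\alpha_1,\alpha_2$, the identification $\dim|2\overline{H}-\overline{P}_0-\cdots-\overline{P}_7|=\dim\fd$ through $(a_1,a_2)\mapsto a_1g_3-a_2g_2$, and Proposition~\ref{prop. conical}) is essentially the paper's argument, but the two implications involving (\ref{thm. conical 2}) have genuine gaps. For (\ref{thm. conical 3})$\Rightarrow$(\ref{thm. conical 2}) everything hinges on the existence, for a tangent line $l$ of $\Delta_X$, of a conic $h$ with $l\nmid h$ such that $g_4l^2+2g_3lh+g_2h^2$ is a square; this is exactly equivalent to the splitting you are trying to prove, and the mechanism you hint at cannot produce it. If $(lb_1+hx,\,lb_2+hy,\,lb_3+hz)=c_3\,(v_1,v_2,v_3)$ with $v$ a constant vector, then for each pair $i<j$ one gets $l\,(b_iv_j-b_jv_i)=-h\,(a_iv_j-a_jv_i)$ with $(a_1,a_2,a_3)=(x,y,z)$; since $l\nmid h$ and $l$ is irreducible, $l$ must divide every nonzero $a_iv_j-a_jv_i$, and for any $v\neq0$ at least two of these linear forms are non-proportional, a contradiction. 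Moreover ``equivalently'' is false: a sum of three squares of cubics can be a square without the vector being proportional to a constant one. The paper sidesteps this entirely: it normalizes $X$ as $f_3^2-4f_1f_2=0$ with $f_i=a_iw+b_i$, $(a_1,a_2,a_3)=(x,y,z)$, and computes $\pi_{\Delta_X}^{\ast}\Gamma_X$ explicitly on $\bP^1\times\bP^1$, exhibiting the factorization into a $(2,4)$- and a $(4,2)$-curve; some such direct construction (or an actual proof of the existence of $h$) is missing from your proposal.

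For (\ref{thm. conical 2})$\Rightarrow$(\ref{thm. conical 3}) you have omitted the paper's main technical work. Corollary~\ref{cor. dimension} indeed gives the seven nodes and (\ref{thm. conical 3-2}), but (\ref{thm. conical 3-3}) and (\ref{thm. conical 3-4}) amount (via $\alpha_1$ and the plane over the line through collinear $P_i$'s) to showing that no three of $\overline{P}_1,\dots,\overline{P}_7$ are collinear and no five of $\overline{P}_0,\dots,\overline{P}_7$ lie on a hyperplane, and this does not follow from ``lifting the hypothetical cubics'' plus Lemma~\ref{lem. 3,3 and 2,4} alone. That lemma, combined with Proposition~\ref{prop. 3,3 split}, only excludes the sub-case where six of the lifted nodes become coplanar (the $(3,3)$-contradiction); the cases of exactly five coplanar nodes (with $\overline{P}_6,\overline{P}_7$ off the plane) and of three collinear nodes require the paper's analysis of $|2\overline{H}-\overline{P}_0-\cdots-\overline{P}_7|$ restricted to a smooth quadric member, respectively to an irreducible singular member via the Hirzebruch surface and Lemma~\ref{lem. hirzebruch}, to force three of the points onto a line through $\overline{P}_0$ and reach a contradiction. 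None of this is reproduced or replaced in your sketch. A smaller but real issue of the same kind: your claim that $\dim\fd\geq2$ by itself forces no four of $P_1,\dots,P_7$ to be collinear is not established by the stated count (the quartics containing the line correspond to cubics through the residual nine points, whose dimension you have not bounded); in the paper this fact comes from Lemma~\ref{lem. 3,3 and 2,4}~(\ref{lem. 3,3 and 2,4 1}) in the splitting direction, and in the direction (\ref{thm. conical 3})$\Rightarrow$(\ref{thm. conical 1}) it needs an argument on the $\bP^3$ side (using (\ref{thm. conical 3-3}) as well), not just (\ref{thm. conical 3-2}).
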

\begin{proof}
We first prove that  conditions (\ref{thm. conical 1}) and (\ref{thm. conical 3}) are equivalent. 
Suppose that  condition (\ref{thm. conical 1}) holds. 
Let $\overline{P}_1,\dots,\overline{P}_7$ be the other assigned nodes of $X$. 
By  Proposition~\ref{prop. conical} and Theorem~\ref{thm. corr} (\ref{thm. corr 3}),  condition (\ref{thm. conical 3}-b) holds. 
Since the eight points $\overline{P}_0,\dots,\overline{P}_7$ are in general position, 
we have $\dim\fd=2$. 
Let $\bar{\fd}$ be the linear system $|2\overline{H}-\overline{P}_0-\dots-\overline{P}_7|$ on $\bP^3$, where $\overline{H}$ is a hyperplane. 
Since the intersection number $(2\overline{H}, 2\overline{H}, 2\overline{H})$ is equal to eight for a hyperplane $\overline{H}\subset\bP^3$, 
there exist no members of $\bar\fd$ which are singular at $\overline{P}_0$. 
Hence the condition (\ref{thm. conical 3}-a) holds. 
If three points $P_1,P_2,P_3$ are on a line $L\subset\bP^2$, and there is a cubic curve $C_3\subset\bP^2$ through the nine points $P_1,P_2,P_3,T_1,\dots,T_6$ which does not contain $\Delta_X$, 
then $\overline{P}_1,\overline{P}_2,\overline{P}_3$ are on two hyperplanes $\overline{H}_1, \overline{H}_2$ such that $p(\overline{H}_1\setminus\{\overline{P}_0\})=L$ and $\alpha(\overline{H}_2)=C_3$ by Theorem~\ref{thm. corr} (\ref{thm. corr 3}), 
which is a contradiction to the assumption that $\overline{P}_0,\dots,\overline{P}_7$ are in general position. 
Hence condition (\ref{thm. conical 3}-c) holds. 
If there exists a cubic curve $C_3$ through eleven points $P_1,\dots,P_5, T_1,\dots,T_6$, 
then $C_3$ contains $\Delta_X$ by Theorem~\ref{thm. corr} (\ref{thm. corr 3}), which is contradiction to condition (\ref{thm. conical 3}-a). 
Hence (\ref{thm. conical 3}-d) holds. 

Conversely, we suppose that condition (\ref{thm. conical 3}) holds. 
The correspondences $\alpha_1$ and $\alpha_2$ in Theorem~\ref{thm. corr} (\ref{thm. corr 3}) implies 
$\dim\bar\fd\geq2$ and that the eight points $\overline{P}_0,\dots,\overline{P}_7$ are in general position. 
By Proposition~\ref{prop. conical}, condition (\ref{thm. conical 1}) holds. 

We next prove that conditions (\ref{thm. conical 1}) and (\ref{thm. conical 2}) are equivalent. 
Assume that condition (\ref{thm. conical 1}) holds. Furthermore we can assume  $(\star)$ in the proof of Proposition~\ref{prop. 3,3 split} and that  $X$ is defined by 
\[ f_3^2-4\, f_1\,f_2=0, \]
where $f_1,\, f_2,\, f_3\in\bC[x,y,z,w]$ are homogeneous polynomials of degree two 
such that $f_1=f_2=f_3=0$ at $\overline{P}_0$. 
Hence $f_1, f_2$  and $f_3$ are of the form 
\[ f_1=a_1\, w+a_2 , \hspace{2em} f_2=b_1\, w+b_2, \hspace{2em} f_3=c_1\, w+c_2, \]
where $a_i, b_i, c_i\in\bC[x,y,z]$ are homogeneous polynomials of degree $i$ for $i=1,2$. 
Since $X$ has a node at $\overline{P}_0$, the linear forms $a_1,b_1,c_1$ are linearly independent over $\bC$. 
Hence we may assume that $a_1=x$, $b_1=y$, $c_1=z$. 
Then the conic $\Delta_X$ is given by $z^2-4\,x\,y=0$ by Theorem~\ref{thm. corr} (\ref{thm. corr 2}). 
In this case, the double cover $\pi_{\Delta_X}:\bP^1\times\bP^1\to\bP^2$ is given by 
$([s_1:s_2],[t_1:t_2]) \mapsto [s_1\, t_1 :  s_2\, t_2  : s_1\, t_2+s_2\, t_1]$. 
By direct computation, we can see that $\pi_{\Delta_X}^{\ast}\Gamma_X$ is defined by 
\[ (s_1^2\, \pi_{\Delta_X}^{\ast}b_2-s_1\, s_2\,\pi_{\Delta_X}^{\ast}c_2+s_2^2\,\pi_{\Delta_X}^{\ast}a_2)(t_1^2\,\pi_{\Delta_X}^{\ast}b_2-t_1\, t_2\,\pi_{\Delta_X}^{\ast}c_2+t_2^2\,\pi_{\Delta_X}^{\ast}a_1)=0. \]
Thus $\Gamma_X$ is a splitting curve of type $(2,4)$ with respect to $\Delta_X$. 

Conversely, we assume that $\pi_{\Delta_X}^{\ast}\Gamma_X=D^++D^-$, where $D^+$ is a $(2,4)$-curve on $\bP^1\times\bP^1$, and $D^-=\iota_{\Delta_X}^{\ast}D^+$. 
We put $P_1,\dots,P_7$ as the nodes of $\Gamma_X$ such that the image of $D^+\cap D^-$ under $\pi_{\Delta_X}$ is $\{ P_1,\dots,P_7,T_1,\dots,T_6 \}$, 
and let $\overline{P}_1,\dots,\overline{P}_7\in X$ be nodes of $X$ with $p(\overline{P}_i)=P_i$ for $i=1,\dots,7$. 
By Proposition~\ref{prop. conical}, it is sufficient to prove that $\overline{P}_0,\dots,\overline{P}_7$ are in general position and $\dim\bar\fd\geq 2$. 
By Corollary~\ref{cor. dimension} and Theorem~\ref{thm. corr} (\ref{thm. corr 3}), 
we obtain $\dim\bar\fd\geq 2$. 

Suppose that five points of $\overline{P}_0,\dots,\overline{P}_8$ are on a hyperplane $\overline{H}_0\subset\bP^3$. 
By Lemma~\ref{lem. 3,3 and 2,4}, we may assume $\overline{P}_1,\dots,\overline{P}_5$ and $\overline{P}_0\not\in\overline{H}_0$. 
Hence we may assume that $\overline{H}_0$ is defined by $w=0$. 
If $\overline{P}_6\in\overline{H}_0$, then $\overline{P}_1,\dots,\overline{P}_6$ are on a conic $\overline{C}$ on $\overline{H}_0$. 
Indeed, if $\overline{P}_6\in\overline{H}_0$, then either $\overline{P}_1,\dots,\overline{P}_6$ are on a conic on $\overline{H}_0$, or $\overline{H}_0$ is a fixed component of $\bar{\fd}$; 
if $\overline{H}_0$ is a fixed component of $\bar{\fd}$, then $\overline{P}_7\in\overline{H}_0$ since $\dim\bar{\fd}\geq 2$, hence $X|_{\overline{H}_0}=2\overline{C}$ for some conic $\overline{C}$ on $\overline{H}_0$ since $X|_{\overline{H}_0}$ is a quartic divisor on $\overline{H}_0$ with seven singular points. 
Thus if $\overline{P}_6\in\overline{H}_0$, then, by Proposition~\ref{prop. 3,3 split} and Lemma~\ref{lem. 3,3 and 2,4} (\ref{lem. 3,3 and 2,4 1}), $\Gamma_X$ is a splitting curve of type $(3,3)$, which is contradiction to Lemma~\ref{lem. 3,3 and 2,4} (\ref{lem. 3,3 and 2,4 2}). 
Therefore, $\overline{P}_6$ and $\overline{P}_7$ are not on $\overline{H}_0$. 

Let $\overline{C}\subset\overline{H}_0$ be the unique conic through the five points $\overline{P}_1,\dots,\overline{P}_5$. 
If all members of $\bar\fd$ are singular, then $\overline{C}$ is reducible and the singular point $\overline{P}$ of $\overline{C}$ is a singular point of all member of $\bar\fd$. 
In this case, the three points $\overline{P}_0$, $\overline{P}_6$ and $\overline{P}_7$ are collinear since $\dim\bar\fd\geq2$ and no four points of $P_1,\dots,P_7$ are collinear, 
which is a contradiction to that $X$ contains no lines through $\overline{P}_0$. 
Hence there is a smooth member $\overline{D}_0\in\bar\fd$. 
We put 
\[ \tilde{\fd}_1:=\{ \overline{D}|_{\overline{D}_0}-\overline{C} \mid \overline{D}\in\bar\fd\setminus\{\overline{D}_0\} \}. \]
Note that $\tilde{\fd}_1$ is a linear system on $\overline{D}_0\cong\bP^1\times\bP^1$ consisting of $(1,1)$-curves through $\overline{P}_0, \overline{P}_6, \overline{P}_7$, and $\dim\tilde{\fd}_1\geq 1$. 
Since the self intersection number of a $(1,1)$-curve is equal to two, 
$\tilde{\fd}_1$ has a fixed component, say $\overline{C}_1$. 
Since $\dim\tilde{\fd}_1\geq1$, $\overline{P}_0, \overline{P}_6,\overline{P}_7$ are on $\overline{C}_1$. 
This implies that the three points $\overline{P}_0, \overline{P}_6,\overline{P}_7$ are collinear in $\bP^3$, which is a contradiction. 
Therefore, no five points of $\overline{P}_0,\dots,\overline{P}_7$ are on a hyperplane of $\bP^3$. 

Suppose that three points of $\overline{P}_0,\dots,\overline{P}_7$ are collinear. 
We may assume that $\overline{P}_5,\overline{P}_6, \overline{P}_7$ are on a line $\overline{L}\subset\bP^3$. 
By the assumption and Lemma~\ref{lem. 3,3 and 2,4}, we have $\overline{P}_0,\dots,\overline{P}_4\not\in\overline{L}$. 
Moreover, by the above argument, we may assume that there are no hyperplanes in $\bP^3$ through $\overline{P}_0,\dots,\overline{P}_4$.
Note that all member of $\bar\fd$ contain $\overline{L}$, and there is an irreducible member in $\bar\fd$ since $\dim\bar\fd\geq 2$. 

\textit{Case 1.} \ 
Suppose that there is a smooth member $\overline{D}_0\in\bar\fd$. 
We put 
\[ \tilde{\fd}_2:=\{\overline{D}|_{\overline{D}_0}-\overline{L}\mid \overline{D}\in\bar\fd\setminus\{\overline{D}_0\}\}. \]
We may assume that $\overline{L}$ is a $(0,1)$-curve on $\overline{D}_0$, and 
 that $\tilde{\fd}_2$ consists of $(2,1)$-curves through $\overline{P}_0,\dots,\overline{P}_4$. 
The linear system $\bar{\fd}_2$ has a fixed component $\overline{C}_2$ since $\overline{C}^2=4<5$ for $\overline{C}\in\tilde{\fd}$. 
Since no hyperplanes in $\bP^3$ pass through $\overline{P}_0,\dots,\overline{P}_4$, 
$\overline{P}_0,\dots,\overline{P}_4$ are on a $(2,0)$-curve on $\overline{D}_0$. 
This implies that five points of $\overline{P}_0,\dots,\overline{P}_7$ are on a hyperplane, 
which is a contradiction. 

\textit{Case 2.} \ 
Suppose that all member of $\bar\fd$ are singular. 
Let $\overline{D}_1$ be an irreducible member of $\bar\fd$. 
Let $\Sigma_2\to\overline{D}_1$ be the blowing-up at the singular point of $\overline{D}_1$, 
and $\sigma:\Sigma_2\to\bP^3$ be the composition of the blowing-up $\Sigma_2\to\overline{D}_1$ and the inclusion $\overline{D}_1\subset\bP^3$. 
We put $F\subset\Sigma_2$ as the strict transformation of $\overline{L}$, and 
\[ \tilde{\fd}_3:=\{\sigma^{\ast}\overline{D}-F\mid \overline{D}\in\bar\fd\setminus\{\overline{D}_1\}\}. \]
Note that $\tilde{\fd}_3$ consists of curves linearly equivalent to $3F+2\, S_0$ and passes through $\sigma^{-1}(\{\overline{P}_0,\dots,\overline{P}_4\})$, 
where $S_0$ is the section of $\Sigma_2$ with $S_0^2=-2$. 
Since $(3\,F+2\,S_0)^2=4$, $\tilde{\fd}_3$ has a fixed component. 
Let $C$ be an irreducible fixed component of $\tilde{\fd}_3$. 
By $\dim\tilde{\fd}_3\geq 1$ and \cite[V Corollary~2.18]{hartshorne}, $C$ is linearly equivalent to $F$ or $2\,F+S_0$. 
If $C$ linearly equivalent to $2\,F+S_0$, then $\sigma^{-1}(\{\overline{P}_0,\dots,\overline{P}_4\})\subset C$ since $\dim\tilde{\fd}\geq 1$, 
hence $\overline{P}_0,\dots,\overline{P}_4$ are on a hyperplane by Lemma~\ref{lem. hirzebruch}, which is a contradiction. 
If $C$ is linearly equivalent to $F$, then $C$ through at least two points of $\sigma^{-1}(\{\overline{P}_0,\dots,\overline{P}_4\})$, say $\sigma^{-1}(\overline{P}_3)$ and $\sigma^{-1}(\overline{P}_4)$. 
This implies that $\overline{P}_3,\dots,\overline{P}_7$ are on a hyperplane, which is a contradiction. 
Hence, no three of $\overline{P}_0,\dots,\overline{P}_7$ are collinear. 
Thus condition (\ref{thm. conical 1}) holds. 
\end{proof}

\section{Examples}\label{sec. examples}

In this section, we give examples of irreducible $6$ and $7$-nodal sextic curves. 
Consequently, we have examples of Zariski pairs. 
We first explain a method of construction of a non-splitting sextic curve with respect to a contact conic. 
\medskip

\textit{A method of construction of a non-splitting sextic.} \ 
Let $\overline{P}_0,\dots,\overline{P}_5$ be six points of $\bP^3$ in general position, 
and let $\bar{\fd}$ be the linear system $|2\overline{H}-\overline{P}_0-\dots-\overline{P}_5|$ on $\bP^3$, where $\overline{H}$ is a hyperplane. 
Note that $\dim\bar{\fd}=3$. 
We consider the rational map $\Phi_{\bar{\fd}}:\bP^3\dashrightarrow\bP^3$ given by $\bar{\fd}$. 
Let $\sigma:\widetilde{\bP}^3\to\bP^3$ be the blowing-up at the six points $\overline{P}_0,\dots,\overline{P}_5$. 
Then $\Phi_{\bar{\fd}}$ induces a generically finite morphism of degree two $\widetilde{\Phi}_{\bar{\fd}}:\widetilde{\bP}^3\to\bP^3$.  
The pull-back of a general quadric surface in $\bP^3$ by $\widetilde{\Phi}_{\bar{\fd}}$ gives an irreducible quartic surface in $\bP^3$ with six nodes at $\overline{P}_0,\dots,\overline{P}_5$. 
If $\overline{D}_0\subset\bP^3$ is a certain quadric cone, 
then the image of $\widetilde{\Phi}_{\bar{\fd}}^{\ast}\overline{D}_0$ by $\sigma$ is a syzygetic quartic surface. 
If we chose certain smooth conics $\overline{D}_1, \overline{D}_2\subset\bP^3$ tangent to the branch locus of $\widetilde{\Phi}_{\bar{\fd}}$ at one and two points, respectively, 
then $X_1:=\sigma(\widetilde{\Phi}_{\bar{\fd}}^{\ast}\overline{D}_1)$ and $X_2:=\sigma(\widetilde{\Phi}_{\bar{\fd}}^{\ast}\overline{D}_2)$ are asyzygetic quartic surfaces with seven and eight nodes, respectively. 
In general, $X_1$ and $X_2$ give non-splitting sextic curves $\Gamma_{X_1}$ and $\Gamma_{X_2}$ with respect to $\Delta_{X_1}$ and $\Delta_{X_2}$, respectively. 
\medskip

We construct the non-splitting sextic curves in Example~\ref{ex. 6-nodal non-split 2} and \ref{ex. 7-nodal non-split} by the above method. 
However, we omit to describe the morphism $\Phi_{\bar{\fd}}$ and smooth quadric surfaces $\overline{D}_1$, $\overline{D}_2$ explicitly. 

Let $\Delta\subset\bP^2$ be a smooth conic. 
Let $(s:t)$ and $(u:v)$ be systems of homogeneous coordinates of $\bP^1$, 
and let $(x:y:z)$ be one of $\bP^2$. 
After certain projective transformation, we may assume that $\Delta$ is given by $\delta_2=0$, where $\delta_2=z^2-4\,xy$, and 
the double cover $\pi_{\Delta}:\bP^1\times\bP^1\to\bP^2$ is described as $\pi_{\Delta}(s:t, u:v)=(su:tv:sv+tu)$. 

\begin{Rem}\label{rem. irred}\rm
Let $\Gamma$ be a $r$-nodal sextic curve for a positive integer $r\leq 7$. 
If $\Gamma$ is reducible, then $\Gamma$ consists of a line and an irreducible curve of degree $5$ by B\'ezout's theorem. 
Conversely, if $\Gamma$ is irreducible, then no $5$ nodes of $\Gamma$ are collinear by B\'ezout's theorem.   
Hence $\Gamma$ is irreducible if and only if no $5$ nodes of $\Gamma$ are collinear. 
\end{Rem}

\subsection{$6$-nodal sextics}

Let $\Gamma_6$ be a $6$-nodal sextic curve on $\bP^2$ such that $\Delta$ is a contact-conic of $\Gamma_6$. 
By Corollary~\ref{lem. number of nodes}, 
$\Gamma_6$ is either a splitting curve of type $(3,3)$ or 
non-splitting curve with respect to $\Delta$. 

\begin{Ex}\label{ex. 6-nodal split}{\rm
Let $\Gamma_6\subset\bP^2$ be a splitting $6$-nodal sextic curve with respect to $\Delta$. 
From the proof of Theorem~\ref{thm. split}, $\Gamma_6$ is given by 
$c_3^2-\delta_2\,c_2^2=0$ for some $c_i\in H^0(\bP^2,\calO_{\bP^2}(i))$. 

For example, if $c_2=x y+y z+z x$ and $c_3=x^3+y^3+z^3$, 
then $\Gamma_6$ is given by 
\[ (x^3+y^3+z^3)^2-(z^2-4\,x y)(x y+y z+z x)^2=0, \]
and $\Gamma_6$ is a $6$-nodal sextic curve. 
The nodes of $\Gamma_6$ are $(\alpha_i : -\alpha_i^5-2\,\alpha_i^4-\alpha_i^3-3\,\alpha_i-1 : 1)$ ($i=1,\dots,6$), 
where $\alpha_i$ are the roots of $\alpha^6+3\,\alpha^5+3\,\alpha^4+\alpha^3+3\,\alpha^2+3\,\alpha+1=0$. 
Note that the $6$ nodes of $\Gamma_6$ are intersection of smooth conic and cubic given by 
\[ x^3+y^3+z^3=xy+yz+zx=0. \]
By Remark~\ref{rem. irred}, $\Gamma_6$ is irreducible. 
}\end{Ex}

\begin{Ex}\label{ex. 6-nodal non-split 1}{\rm
Let $L_1,\dots,L_4\subset\bP^2$ be general lines given by equations $l_i=0$, and 
let $C_3$ be a cubic curve given by $c_3=0$. 
If the $6$-nodes $P_1,\dots,P_6$ of $L_1+\dots+L_4$ are not on $\Delta$ and $C_3$ passes through $P_1,\dots,P_6$, 
then the sextic curve $\Gamma'_6$ given by $c_3^2-\delta_2\, l_1\, l_2\, l_3\, l_4=0$ has $6$ nodes at $P_1,\dots,P_6$. 

For example, if $l_1=z,\, l_2=x-y,\, l_3=2\,x-y,\, l_4=x+y-2\,z$ and 
$c_3=2\,x^3-x^2\,y+3\,x^2z-2\,x y^2-4\,x z^2+y^3+y z^2$, 
then $\Gamma'_6$ is given by 
\[ (2\,x^3-x^2y+3\,x^2z-2\,x y^2-4\,x z^2+y^3+y z^2)^2-z(x-y)(2\,x-y)(x+y-2\,z)(z^2-4\,x y)=0, \]
and $\Gamma'_6$ is a $6$-nodal sextic curve. 
The nodes of $\Gamma'_6$ are $(1:1:0), (1:2:0), (1:-1:0), (0:0:1), (1:1:1), (2:4:3)$. 
By Theorem~\ref{thm. split}, $\Gamma'_6$ is a non-splitting curve with respect to $\Delta$ since the $6$ nodes of $\Gamma'_6$ are not on a conic. 
Moreover, $\Gamma'_6$ is irreducible since no $5$ nodes of $\Gamma'_6$ are collinear. 
}\end{Ex}

The following example is a non-splitting $6$-nodal sextic curve with respect to $\Delta$, 
whose nodes are in general position. 

\begin{Ex}\label{ex. 6-nodal non-split 2}{\rm
Put $c''_3$ and $c_4''$ as follows; 
\begin{eqnarray*}
c''_3&=&
3328\,{x}^{3}+1392\,{x}^{2}y-672\,{x}^{2}z+180\,x{y}^{2} \\ 
&&-516\,xyz-180
\,x{z}^{2}+10\,{y}^{3}-33\,{y}^{2}z-45\,y{z}^{2} \\
c''_4&=&
10496\,{x}^{4}+6272\,{x}^{3}y-2528\,{x}^{3}z+1200\,{x}^{2}{y}^{2}-912
\,{x}^{2}yz+1176\,{x}^{2}{z}^{2} \\
&&+80\,x{y}^{3}-90\,x{y}^{2}z+246\,xy{z}
^{2}+2\,{y}^{4}-5\,{y}^{3}z+15\,{y}^{2}{z}^{2}
\end{eqnarray*} 
Let $\Gamma''_6$ be the sextic curve given by the following equation; 
\[ (c_3'')^2-432\,\delta_2 c_4''=0 \]
Then $\Gamma''_6$ is a $6$-nodal curve with a simple contact conic $\Delta$. 
The nodes of $\Gamma''_6$ are $(0:0:1), (0:3:2), (-1:4:0), (-1:10:6), (-1:16:8), (-3:36:38)$. 
Hence $\Gamma''_6$ is irreducible. 
It easy to see that there are no conic passing through the $6$ nodes of $\Gamma''_6$. 
Hence $\Gamma''_6$ is non-splitting curve with respect to $\Delta$ by Theorem~\ref{thm. split}. 
}\end{Ex}

We have an example of Zariski pair. 

\begin{Th}
Let $\Gamma_6$, $\Gamma_6'$ and $\Gamma_6''$ be the $6$-nodal sextics in Example~\ref{ex. 6-nodal split}, \ref{ex. 6-nodal non-split 1} and \ref{ex. 6-nodal non-split 2}, respectively. 
Then the pairs $(\Gamma_6+\Delta,\,\Gamma_6'+\Delta)$ and $(\Gamma_6+\Delta,\,\Gamma_6''+\Delta)$ are Zariski pairs. 
\end{Th}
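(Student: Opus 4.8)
The plan is to verify the two conditions in the definition of a Zariski pair separately: first that the three curves $\Gamma_6+\Delta$, $\Gamma_6'+\Delta$, $\Gamma_6''+\Delta$ share a combinatorial type, and then that no two of the pairs $(\bP^2,\Gamma_6+\Delta)$, $(\bP^2,\Gamma_6'+\Delta)$, $(\bP^2,\Gamma_6''+\Delta)$ are homeomorphic.

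For condition~(1) I would observe that, since the homeomorphism class of a tubular neighbourhood of a plane curve is determined by its combinatorial type, it suffices to check that $\Gamma_6+\Delta$, $\Gamma_6'+\Delta$, $\Gamma_6''+\Delta$ all have the same combinatorial type. In each of Examples~\ref{ex. 6-nodal split}, \ref{ex. 6-nodal non-split 1}, \ref{ex. 6-nodal non-split 2}, the conic $\Delta$ is smooth, the sextic $\Gamma_6^{(\cdot)}$ has exactly six nodes and no worse singularity, $\Gamma_6^{(\cdot)}$ is irreducible (by Remark~\ref{rem. irred}, since no five of its six nodes are collinear), and $\Delta$ is a simple contact conic, so $\Gamma_6^{(\cdot)}\cap\Delta$ consists of six distinct points, each a smooth point of the sextic at which the local intersection number is $2$; in particular none of the nodes lies on $\Delta$. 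Hence in all three cases the combinatorial type is that of an irreducible $6$-nodal sextic together with a smooth conic meeting it at six simple tangent points, and condition~(1) holds.

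For condition~(2) I would argue by contradiction using the topological invariance of the splitting type established at the start of Section~\ref{sec. splitting}. Suppose $(\bP^2,\Gamma_6+\Delta)$ were homeomorphic to $(\bP^2,\Gamma_6'+\Delta)$; then there is a homeomorphism $h:\bP^2\to\bP^2$ with $h(\Gamma_6+\Delta)=\Gamma_6'+\Delta$. By Example~\ref{ex. 6-nodal split}, $\Gamma_6$ is a splitting curve of type $(3,3)$ with respect to $\Delta$: it is defined by $c_3^2-\delta_2 c_2^2=0$, so condition (iii) of Theorem~\ref{thm. split} holds with $m=n=3$ (hence $k=0$), the polynomials $c_3,c_2$ having no common factor with a general tangent line of $\Delta$. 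On the other hand, by Examples~\ref{ex. 6-nodal non-split 1} and \ref{ex. 6-nodal non-split 2} together with Theorem~\ref{thm. split}, neither $\Gamma_6'$ nor $\Gamma_6''$ is a splitting curve of type $(3,3)$ (there is no conic through their six nodes), and since by Corollary~\ref{lem. number of nodes} a $6$-nodal sextic with a simple contact conic can only split with type $(3,3)$, both $\Gamma_6'$ and $\Gamma_6''$ are non-splitting. As $\deg\Gamma_6=\deg\Gamma_6'=6\geq3$ and both curves are irreducible, the invariance theorem applied to $h$ forces $\Gamma_6'$ to be a splitting curve of type $(3,3)$ with respect to $\Delta$ — a contradiction. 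Running the same argument with $\Gamma_6''$ in place of $\Gamma_6'$ gives $(\bP^2,\Gamma_6+\Delta)\not\cong(\bP^2,\Gamma_6''+\Delta)$. Combined with condition~(1), this shows that $(\Gamma_6+\Delta,\Gamma_6'+\Delta)$ and $(\Gamma_6+\Delta,\Gamma_6''+\Delta)$ are Zariski pairs.

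I expect the only delicate point to be in the combinatorial-type bookkeeping of the first paragraph: one must make sure that for a reducible curve of the shape ``sextic $+$ conic'' the combinatorial type is pinned down exactly by the listed data — the two components and their degrees, the node as the sole singularity type of the sextic, and the six simple tangencies as the intersection pattern — with no hidden dependence on the relative position of the six nodes. This is routine (it uses only that the nodes are ordinary double points of a single irreducible component and that the two components meet in the same way in every example), but it is where care is needed.
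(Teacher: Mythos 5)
Your proposal is correct and follows exactly the argument the paper intends (the theorem is stated as a direct consequence of Examples~\ref{ex. 6-nodal split}--\ref{ex. 6-nodal non-split 2}): the three configurations share the combinatorial type of an irreducible $6$-nodal sextic with a smooth conic simply tangent at six points, while $\Gamma_6$ splits with type $(3,3)$ and $\Gamma_6'$, $\Gamma_6''$ are non-splitting by Theorem~\ref{thm. split} and Corollary~\ref{lem. number of nodes}, so the topological invariance of the splitting type from Section~\ref{sec. splitting} rules out a homeomorphism of pairs. No essential difference from the paper's route, and your extra care about the combinatorial-type bookkeeping is a reasonable addition rather than a deviation.
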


\begin{Rem}{\rm
It is not known whether $(\Gamma_6'+\Delta,\,\Gamma_6''+\Delta)$ is a Zariski pair, or not. 
}\end{Rem}

\subsection{7-nodal sextics}

The genus of a $7$-nodal sextic curve is equal to $3$. 
Hence all $7$-nodal sextic curve are not splitting curves of type $(1,5)$ with respect to any contact conic. 

\begin{Ex}\label{ex. 7-nodal type (3,3)}{\rm
We put 
\begin{eqnarray*}
c_2 &=& {z}^{2}-4\,xy,  \\[0.5em]
c_3 &=& {y}^{2}z-3\,xyz+{z}^{3}-{x}^{2}z,  \\[0.5em]
c_4 &=& \left( {z}^{2}-xy-{y}^{2}+{x}^{2} \right) ^{2}.
\end{eqnarray*}
Let $\Gamma_7$ and $\Delta$ be the sextic curve and the conic given by $c_3^2-c_2c_4=0$ and $c_2=0$, respectively. 
Then $\Gamma_7$ is a $7$-nodal sextic with  a simple contact conic $\Delta$, and 
the $7$ nodes of $\Gamma_7$ are $(0:0:1)$, $(\alpha_i:2\alpha_i^3+\alpha_i:1)$ and  $(\beta_j:1:0)$ for $i=1,\dots, 4$ and $j=1,2$, 
where $\alpha_i$ and $\beta_j$ are roots of the equations $4\alpha^4+2\alpha^2-1=0$ and $\beta^2-\beta-1=0$, respectively. 
The $6$ nodes $(\alpha_i:2\alpha_i^3+\alpha_i:1)$ and $(\beta_j:1:0)$ are on a smooth conic $z^2-xy-y^2+x^2=0$, hence $\Gamma_7$ is irreducible. 
By Theorem~\ref{thm. split}, $\Gamma_7$ is a splitting curve of type $(3,3)$ with respect to $\Delta$. 
}\end{Ex}

\begin{Ex}\label{ex. 7-nodal type (2,4)}{\rm
We put 
\begin{eqnarray*}
f_1 &=& xw-y^2+z^2, \\[0.5em]
f_2 &=& yw-x^2+z^2, \\[0.5em]
f_3 &=& zw-x^2+y^2. 
\end{eqnarray*}
Then $f_3^2-4\,f_1f_2=0$ defines a syzygetic quartic surface $X$ in $\bP^3$ with $8$ nodes at $(0:0:0:1)$, $(0:1:1:-1)$, $(-1:0:1:1)$, $(1:1:0:1)$, $(1:1:1:0)$, $(-1:1:1:0)$, $(1:-1:1:0)$, $(1:1:-1:0)$. 
We have the $7$-nodal sextic curve $\Gamma_X$ with the simple contact conic $\Delta_X$ from the projection $X\dashrightarrow\bP^2$ with center $(0:0:0:1)$. 
Since the $8$ nodes of $X$ are in general position, $\Gamma_X$ is irreducible. 
By Theorem~\ref{thm. conical}, $\Gamma_X$ is a splitting curve of type $(2,4)$ with respect to $\Delta_X$. 
}\end{Ex}

\begin{Ex}\label{ex. 7-nodal non-split}\rm
We put 
\begin{eqnarray*}
c'_2 &=& -61\,{x}^{2}+20\,xy+4\,xz+4\,{y}^{2}-4\,yz+{z}^{2}, \\
c'_3 &=& -13\,{x}^{2}y+168\,{x}^{2}z-74\,xyz-8\,x{z}^{2}-8\,{y}^{2}z+7\,y{z}^{2}, \\
c'_4 &=& {x}^{2}{y}^{2}+16\,{x}^{2}yz-112\,{x}^{2}{z}^{2}-4\,x{y}^{2}z+64\,xy{z}^{2}-{y}^{2}{z}^{2}. 
\end{eqnarray*}
Let $\Gamma'_7$ and $\Delta'$ be the sextic curve and conic defined by ${c'_3}^2-4c'_2c'_4=0$ and $c'_2=0$, respectively. 
The sextic curve $\Gamma'_7$ is a $7$-nodal sextic with the simple contact conic $\Delta'$. 
The $7$ nodes of $\Gamma'_7$ are $(0:0:1)$, $(0:1:0)$, $(1:0:0)$, $(1:1:1)$, $(1:-2:1)$, $(-1:6:3)$, $(1:2:-3)$. 
Hence $\Gamma'_7$ is irreducible. 
It is easy to see that there are no conics passing through $6$ points of the $7$ nodes of $\Gamma'_7$. 
Hence $\Gamma'_7$ is not a splitting curve of type $(3,3)$ with respect to $\Delta'$  by Theorem~\ref{thm. split}. 
Moreover, we can check by direct computation that the dimension of the linear system consisting of quartic curves passing through the $7$ nodes and $6$ tangent points of $\Gamma'_7$ and $\Delta'$ is equal to $1$. 
Thus $\Gamma'_7$ is not a splitting curve of type $(2,4)$ with respect to $\Delta'$. 
Therefore, $\Gamma'_7$ is a non-splitting curve with respect to $\Delta'$.  
\end{Ex}

We have an example of Zariski $3$-plet. 

\begin{Th}
Let $\Gamma_7+\Delta$, $\Gamma_X+\Delta_X$ and $\Gamma'_7+\Delta'$ be the curves of Example~\ref{ex. 7-nodal type (3,3)}, \ref{ex. 7-nodal type (2,4)} and \ref{ex. 7-nodal non-split}, respectively. 
Then the $3$-plet $(\Gamma_7+\Delta, \Gamma_X, \Delta_X, \Gamma'_7+\Delta')$ is a Zariski $3$-plet. 
\end{Th}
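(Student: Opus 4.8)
The plan is to verify the two defining conditions of a Zariski $3$-plet directly for the triple $(C_1,C_2,C_3)$, where I set $C_1:=\Gamma_7+\Delta$, $C_2:=\Gamma_X+\Delta_X$ and $C_3:=\Gamma_7'+\Delta'$ (reading the statement as the $3$-plet $(\Gamma_7+\Delta,\ \Gamma_X+\Delta_X,\ \Gamma_7'+\Delta')$), these being the curves of Examples~\ref{ex. 7-nodal type (3,3)}, \ref{ex. 7-nodal type (2,4)} and \ref{ex. 7-nodal non-split}. The key inputs are already in place: the topological invariance of the splitting type proved at the start of Section~\ref{sec. splitting}, and the explicit determinations of the splitting types of the three examples (via Theorem~\ref{thm. split} and Theorem~\ref{thm. conical}).

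First I would check condition~(1), that suitable tubular neighbourhoods $(\mathcal{T}_i,C_i)$ are homeomorphic. For this it is enough to observe that $C_1$, $C_2$, $C_3$ all have the same combinatorial type. Indeed, each $C_i$ is the union of an irreducible sextic ($\Gamma_7$, $\Gamma_X$, $\Gamma_7'$ respectively) and a smooth conic; each sextic is $7$-nodal with no other singularities (and is irreducible by the criterion recalled in Remark~\ref{rem. irred}, as noted in each example); and in each case the conic is a \emph{simple} contact conic of the sextic, so the two components meet at exactly $6$ distinct points, each a smooth point of the sextic with local intersection number $2$, producing $6$ tacnodes ($A_3$-singularities) of $C_i$ and nothing else. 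Since the homeomorphism class of a tubular neighbourhood of a plane curve is determined by the combinatorial type (as recalled in the introduction, cf.\ \cite{survey}), condition~(1) follows.

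Next I would check condition~(2), that $(\bP^2,C_i)$ and $(\bP^2,C_j)$ are not homeomorphic for $i\ne j$. Here I would use the topological invariance of the splitting type: for an irreducible curve $\Gamma$ of degree $\ge 3$ with a contact conic $\Delta$, whether $\Gamma$ is a splitting curve of type $(m,n)$ with respect to $\Delta$ depends only on the homeomorphism type of the pair $(\bP^2,\Gamma+\Delta)$. In our case each $\Gamma$ is irreducible of degree $6$, and since an irreducible sextic $\Gamma$ has at most two irreducible components in $\pi_{\Delta}^{\ast}\Gamma$, interchanged by $\iota_\Delta$, its splitting type is a single well-defined datum --- either one bidegree $(m,n)$ with $m\le n$, $m+n=6$, or ``non-splitting''. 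By Examples~\ref{ex. 7-nodal type (3,3)}, \ref{ex. 7-nodal type (2,4)} and \ref{ex. 7-nodal non-split}, this datum equals $(3,3)$ for $\Gamma_7$, $(2,4)$ for $\Gamma_X$, and ``non-splitting'' for $\Gamma_7'$, so the three values are pairwise distinct. Consequently, a homeomorphism $h:\bP^2\to\bP^2$ with $h(C_i)=C_j$ for some $i\ne j$ would force (via the invariance theorem) the splitting types of $C_i$ and $C_j$ to agree, a contradiction. Hence condition~(2) holds and $(C_1,C_2,C_3)$ is a Zariski $3$-plet.

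I do not expect any genuine obstacle in this proof: all of the hard work --- establishing that the splitting type is a topological invariant, and carrying out the explicit computations showing that these particular pairs realise the three distinct splitting types --- has been done in the earlier sections and examples. The only things that still need to be confirmed are the two pieces of bookkeeping above, namely that the three unions $C_i$ share a common combinatorial type (a routine count of components, degrees, and singularity types), and that the three splitting types really are distinct values of the invariant (which is immediate once one notes the splitting type is well defined for an irreducible sextic).
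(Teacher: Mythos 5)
Your proof is correct and is exactly the argument the paper intends (the theorem is stated without an explicit proof, relying on the invariance theorem of Section~\ref{sec. splitting} and the computations in Examples~\ref{ex. 7-nodal type (3,3)}--\ref{ex. 7-nodal non-split}): the three unions share the combinatorial type of an irreducible $7$-nodal sextic with a simple contact conic, while the splitting types $(3,3)$, $(2,4)$ and non-splitting are pairwise distinct topological invariants of the pairs $(\bP^2,\Gamma+\Delta)$. Your reading of the $3$-plet as $(\Gamma_7+\Delta,\ \Gamma_X+\Delta_X,\ \Gamma_7'+\Delta')$ correctly fixes the typo in the statement.
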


\end{document}